\newtheorem{theo}{Theorem}[section]
\newtheorem{coro}{Corollary}[section]
\newtheorem{lemm}{Lemma}[section]
\newtheorem{prop}{Proposition}[section]
\newdefinition{defn}{Definition}[section]
\newdefinition{rem}{Remark}[section]
\newdefinition{exmpl}{Example}[section]
\newenvironment{assp}[1]%
  {\par\normalfont\trivlist\item {\bfseries Assumption}~#1.\ \ignorespaces}
  {\endtrivlist}
\newenvironment{assps}[1]%
  {\par\normalfont\trivlist\item {\bfseries Assumptions}~#1.\ \ignorespaces}
  {\endtrivlist}
\numberwithin{equation}{section}
\let\P\@undefined
\DeclareMathOperator{\P}{\mathbb P}        
\DeclareMathOperator{\E}{\mathbb E}        
\DeclareMathOperator{\Var}{Var}            
\DeclareMathOperator{\Po}{\mathcal P}      
\DeclareMathOperator{\Pois}{Pois}          
\DeclareMathOperator{\1}{\mathbf 1}        
\DeclareMathOperator{\B}{\mathcal B}       
\DeclareMathOperator{\Meas}{Meas}          
\DeclareMathOperator{\SIG}{SIG}            
\DeclareMathOperator{\NSIG}{NSIG}          
\DeclareMathOperator{\NNeigh}{NN}
\newcommand{\NN}[1]{\NNeigh^#1}            
\newcommand{\NND}[1]{\NNeigh^{#1, \to}}    
\DeclareMathOperator{\Lin}{Lin}            
\DeclareMathOperator{\diam}{diam}          
\DeclareMathOperator{\vol}{vol}            
\DeclareMathOperator*{\esssup}{ess\,sup}   
\DeclareMathOperator{\Dens}{D}             
\DeclareMathOperator{\Momla}{M}            
\DeclareMathOperator{\MomlaOne}{M1}        
\DeclareMathOperator{\MomGrInt}{MGI}       
\DeclareMathOperator{\MomGrPt}{MGP}        
\DeclareMathOperator{\FinHom}{FH}          
\DeclareMathOperator{\MomHom}{MH}          
\DeclareMathOperator{\MomHomOne}{MH1}      
\DeclareMathOperator{\Gen}{G}              
\DeclareMathOperator{\ConvergWLLN}{CWLLN}  
\DeclareMathOperator{\ConvergVar}{CV}      
\DeclareMathOperator{\ConvergCLT}{CCLT}    
\newcommand{\0}{\mathbf 0}                 
\newcommand{\N}{\mathbb N}                 
\newcommand{\R}{\mathbb R}                 
\newcommand{\X}{\mathcal X}                
\newcommand{\Y}{\mathcal Y}                
\newcommand{\U}{\mathcal U}
\newcommand{\Colon}{\colon\>}              
\newcommand{\sth}{\mathrel{;}}             
\newcommand{\dst}{\displaystyle}
\newcommand{\eqd}{\stackrel{\mathcal{D}}{=}}  
\newcommand{\la}{\lambda}                  
\newcommand{\ka}{\kappa}                   
\newcommand{\dl}{\mathrm d}                
\newcommand{\eps}{\varepsilon}             
\newcommand{\Mom}{M}                       
\newcommand{\mom}{m}                       
\newcommand{\cu}{c}                        
\newcommand{\Cum}{c}                       
\newcommand{\Clu}{U}                       
\newcommand{\Marks}{\mathcal M}            
\newcommand{\marked}{\breve}               
\newcommand{\markedname}{breve}            
\newcommand{\volB}{\omega}                 
\newcommand{\markedAin}{\marked{A}^{\mathrm{in}}}
\newcommand{\bdl}[2][\relax]{\mathrm{\bar d}\ifx#1\relax\else[#1]\fi #2}
\newcommand{\tdl}[2][\relax]{\mathrm{\tilde d}\ifx#1\relax\else[#1]\fi #2}
\newcommand{\muxilaka}{\mu_\la}           
\newcommand{\barmuxilaka}{\bar\mu_\la}    
\newcommand{\sigmaxika}{\sigma}           
\newcommand{\sigmaxikainf}{\sigma_-}      
\newcommand{\sigmaxilaka}{\sigma_\la}     
\newcommand{\Vxi}{V}                      
\newcommand{\Ixika}[1]{I_{#1}}            
\newcommand{\Ixikameas}{I}                
\newcommand{\Jxikameas}{J}                
\newcommand{\Deltaxi}{\Delta}             
\newcommand{\Hxi}{H}                      
\newcommand{\RDeltaxi}{R^\Delta}          
\DeclareMathOperator{\sepp}{sep}          
\DeclareMathOperator{\Sep}{Sep}           
\DeclareMathOperator{\dist}{dist}         
\DeclareMathOperator{\bd}{\partial}       
\newcommand{\url}{\texttt}                
\journal{Annales de l'Institut Henri Poincar\'e (B) Probabilit\'es et Statistiques}
\begin{document}

%
%

\begin{frontmatter}

\title{Moderate deviations for stabilizing functionals in geometric probability}

\author{P.~Eichelsbacher}
%
\address{Fakult\"at f\"ur Mathematik, Ruhr-Universit\"at Bochum, Germany}
\ead{peter.eichelsbacher@rub.de}

\author{M.~Rai\v{c}}
\address{University of Ljubljana (FMF) and University of Primorska (FAMNIT), Slovenia}
\ead{martin.raic@fmf.uni-lj.si}

\author{T.~Schreiber\fnref{fnTomasz}}
\address{Faculty of Mathematics and Computer Science, Nicholas Copernicus University, Toru\'n, Poland}

\fntext[fnTomasz]{Research partially supported by the Polish Minister of Science and
Higher Education grant N N201 385234 (2008-2010).\\
\textbf{Tomasz Schreiber passed away on $ \text{1}^{\text{st}} $ December 2010.}}

%
%
%

\begin{abstract} The purpose of the present paper is to establish explicit
 bounds on moderate deviation probabilities for a rather general class
 of geometric functionals enjoying the stabilization property, under
 Poisson input and the assumption of a certain control over the growth of
 the moments of the functional and its radius of stabilization. Our proof
 techniques rely on cumulant expansions and cluster measures and yield
 completely explicit bounds on deviation probabilities. In addition, we
 establish a new criterion for the limiting variance to be non-degenerate.
 Moreover, our main result provides a new central limit theorem, which,
 though stated under strong moment assumptions, does not require bounded
 support of the intensity of the Poisson input.
 We apply our results to three groups of examples: random packing models,
 geometric functionals based on Euclidean nearest neighbors and
 the sphere of influence graphs.
\end{abstract}

\begin{keyword}
Stabilizing functionals\sep
moderate deviations\sep
explicit bounds\sep
cumulants\sep
random packing\sep
random graphs
\MSC 60F10\sep 60D05
\end{keyword}

\end{frontmatter}

\section{Introduction, main results}
\label{sc:Intr}

\subsection{Introduction}

Stabilization is an important concept expressing in natural geometric terms 
mixing properties of a broad class of functionals of point processes arising
in geometric probability, see \cite{PY1,PY2,BY2}. Even though these processes
are presumably also tractable using more traditional mixing concepts, 
stabilization-based techniques proved extremely convenient in studying
the asymptotic behavior of large random geometric systems. This is due
to the geometric nature of these methods which makes them compatible with
many stochastic geometric set-ups in which the target functionals arise.
In particular, stabilization is often helpful in establishing a direct
connection between the microscopic (local) and macroscopic properties of
the processes studied, see ibidem for further details. 
Stabilization has been successfully used to establish laws of large numbers
for many functionals \cite{Pe2007LLN,PY2,PY4} and it has also been employed in a
general setting to establish Gaussian limits for re-normalized functionals
as well as re-normalized spatial point measures \cite{BY2,Pe2,Pe2007CLT,PY1,PW}. The
functionals to which the afore-mentioned theory applies include those
defined by percolation models \cite{Pe2}, random graphs in computational
geometry \cite{BY2,PY1}, random packing models \cite{BY1,PY2}, and the germ--grain
models \cite{BY2}.
Large deviation principles for stabilizing functionals have
also been established, see \cite{SY}. Finally, the corresponding
moderate deviation principle, interpolating between the central limit 
theorem and law of large numbers, has been obtained in \cite{BESY} and
\cite{ES10}, for a rather limited sub-class of the above examples though,
namely for empirical functionals of random sequential packing, 
nearest neighbor graphs and germ--grain models.

In this paper, we use stabilization combined with cumulant
expansion techniques in the spirit of \cite{BY2}
to prove moderate deviation bounds for three groups of geometric
functionals: random sequential packing along with birth--growth models,
functionals based on nearest neighbors and sphere of influence graphs.
In each case, we consider a much more general class of geometric functionals,
based on a newly introduced concept of \emph{confinement}.
In particular, in the birth--growth models, we lift
the unnatural lower bound on grain sizes. Moreover, our
deviation probability estimates are much more explicit than those established
in \cite{BESY}. In addition, we derive moderate deviation principles
in the $ \tau $-topology, which is stronger than the weak topology,
which is used in \cite{BESY}.

We assume Poisson input with a density, which is assumed to be bounded
and integrable, but need not have bounded support. This fact adds to
existing central limit theorems, which, to the best of our knowledge,
all require the latter assumption: see Remark~\ref{probabilities-CLT}.
In particular, bounded support is not needed in the first group of
applications, random packing models. On the other hand, in the other
two groups, nearest neighbors and sphere of influence graphs, bounded
support is required in order to ensure stabilization.

On the other hand though, for the applications considered in \cite{BESY},
our approach provides a much narrower range of scaling regimes to which
moderate deviation results apply. In particular, our results in the current
form do not seem to provide moderate deviation principles in the full range.
However, in \cite{BESY}, it is assumed that the random measures associated
to the functionals and the Poisson point process can be suitably coupled with 
their exponential (Gibbs) modifications. In the present paper, we assume no such
structure; as a result, the application of our main results is much more
straightforward. Moreover, future refinements might actually yield full range
moderate deviation principles in some cases: see Remark~\ref{NonOpt}.

However, it is well known that many natural multidimensional
stochastic systems exhibiting various types of exponential mixing often
satisfy the Gaussian moderate deviation principle only up to a certain
point beyond the CLT scale, whereafter the Gaussian behavior breaks down
and gets replaced by phenomena of a different nature. As a spectacular
example, consider the phase separation, condensation and droplet creation
as established for many statistical mechanical models in phase transition
regime, see the seminal monograph \cite{DKS} as well as the survey 
\cite{BIV}. Consequently, we believe that the Gaussian moderate deviation
principle may well be violated by geometric stabilizing functionals 
for ranges far enough from the CLT regime. Even though we are 
definitely not in a position to claim that the ranges of Gaussian
behavior for deviation probabilities established in this paper are
optimal, we should most likely not hope to get a full range Gaussian moderate
deviation principle at the level of generality considered here.

\subsection{Terminology and notation}\label{ssc:terminology}

We begin with some common notation. First, denote $ \N = \{ 1, 2, 3, \ldots \} $
and $ \N_0 = \{ 0, 1, 2, \ldots \} $. Next, for a set $ A $, denote by $ |A| $
its cardinality. Throughout this paper, fix $ d \in \N $. For $ x \in \R^d $,
denote by $ \| x \| $ its Euclidean norm and, furthermore,
for $ A \subseteq \R^d $, denote by $ \dist(x, A) $ the Euclidean distance
from $ x $ to $ A $. Next, denote by $ \0 $ the origin in $ \R^d $.

Unless specified otherwise, the expression `measurable' will mean
`Borel-measurable' when applied to subsets of $ \R^d $.
For a measurable set $ A \subseteq \R^d $, denote by $ \vol(K) $ its
Lebesgue measure.
Throughout this paper, the letter $ \Omega $ will denote a measurable
subset of $ \R^d $, which will be called a \emph{domain}. Next,
the letter $ \ka $ will denote a probability density function
on $ \R^d $, vanishing on $ \R^d \setminus \Omega $. Abusing the notation slightly,
$ \ka $ will sometimes also denote the corresponding probability measure, i.~e.,
$ \ka(x) \,\dl x $. In particular, `$ \ka $-almost everywhere' will mean
`for $ \ka(x) \,\dl x $-almost all $ x $'.

Furthermore, we let $\langle f, \mu \rangle$ denote the integral with respect to
a signed finite variation Borel measure $\mu$ of a $\mu$-integrable function $f$.
For a measurable set $ W \subseteq \R^d $, we write $ \B(W) $ for the collection
of bounded measurable $ f \Colon W \to \R $.
Finally, we shall assume that $ 0/0 = 0 $ (and $ a/0 = + \infty $ for $ a > 0 $
and $ - \infty $ for $ a < 0 $). The essential supremum of $ f $ with respect to
a measure $ \mu $ will be denoted by $ \esssup_{\mu(\dl x)} f(x) $. In particular,
for a non-negative function $ g $, $ \esssup_{g(x) \,\dl x} f(x) $ will denote
the essential supremum of $ f $ with respect to the Lebesgue measure restricted
to the set $ \{ x \sth g(x) > 0 \} $.

Next, we introduce \emph{marked points}. Let
$ (\Marks, \mathcal F_\Marks, \P_\Marks) $ be a probability space
(\emph{mark space}). Marked points will be the elements of
$ \marked{\R}^d := \R^d \times \Marks $ and will be usually denoted by
a \markedname\ accent. We shall use the following convention: if a letter with
a \markedname\ accent denotes a marked point and the same letter without the accent
appears in the same context, both will refer to the same location.
More formally, when $ \marked{x} \in \marked{\R}^d $ and $ x $ appear
in the same context, we shall always assume that $ \marked{x} = (x, t) $ for
some $ t \in \Marks $. Similarly, for a set $ \marked{\X} \subseteq \marked{\R}^d $,
$ \X $ in the same context will denote the set $ \{ x \sth (\exists t \in \Marks)
\> (x, t) \in \marked{\X} \} $. A subset $ \marked{A} \subseteq \marked{\R}^d $ will
be called \emph{measurable} if it is measurable with respect to the product of
the Borel $ \sigma $-algebra on $ \R^d $ and the $ \sigma $-algebra
$ \mathcal F_\Marks $.

\begin{trivlist}
\item \textbf{Common Conventions}.
Let $ z \in \R^d $ and $ a \in \R $. For $ \marked{x} = (x, t) \in
\marked{\R}^d $, put $ \marked{x} + z := (x + z, t) $ and
$ a \marked{x} := (a x, t) $. For a set $ \marked{\X} \subseteq \marked{\R}^d $,
put $ \marked{\X} + z := \{ \marked{x} + z \sth \marked{x} \in \marked{\X} \} $ and
$ \la \marked{\X} := \{ \la \marked{x} \sth \marked{x} \in \marked{\X} \} $. 
For sets $ \marked{A} \subseteq \marked{\R}^d $
and $ B \in \R^d $, define $ \marked{A} \cap B := \marked{A} \cap (B \times \Marks) $
and $ \marked{A} \setminus B := \marked{A} \setminus (B \times \Marks) $.
\end{trivlist}

We shall often integrate over $ \marked{\R}^d $. For that purpose, we extend
the meaning of differential to integration with respect to the product
of the Lebesgue measure and $ \P_\Marks $. More formally, we define:
\begin{equation}
\label{eq:diffMarked}
 \int_{\marked{\R}^d} f(\marked{x}) \,\dl \marked{x} :=
 \int_{\R^d} \E f(x, T) \,\dl x \, ,
\end{equation}
where $ T $ is a generic random mark with distribution $ \P_\Marks $.

For $ x \in \R^d $ and $ r > 0 $, $ B_r(x) $ denotes the closed Euclidean ball
of radius $ r $ centered at $ x $.
A set $ \marked{\X} \subseteq \marked{\R}^d $ will be called \emph{configuration}
if it is locally finite with respect to the product
of the Euclidean topology on $ \R^d $ and the indiscrete topology on
$ \Marks $, i.~e., if for each $ x \in \R^d $, the set $ \marked{\X}
\cap B_r(x) $ is finite for some $ r > 0 $ (recall from the Common Conventions
that $ \marked{\X} \cap B_r(x) $ is interpreted as $ \marked{\X}
\cap (B_r(x) \times \Marks) $). A configuration which is
also a subset of $ \marked{\Omega} := \Omega \times \Marks $ will be
called a \emph{configuration on $ \Omega $}. We shall keep the meaning of
$ \marked{\Omega} $ throughout the paper.

On the set of all configurations, we
shall consider the standard $ \sigma $-algebra defined as the smallest
$ \sigma $-algebra, such that the map $ \marked{\X} \mapsto |\marked{\X} \cap \marked{A}| $
is measurable for all measurable sets $ \marked{A} \subseteq \marked{\R}^d $.
Thus, a subset of the set of all configurations will be called measurable if it is
measurable with respect to the latter $ \sigma $-algebra.

Now we turn to our main object, geometric functionals and the associated
random measures. We first refine the definition of a geometric functional
from \cite{BY2, PY4}.

\begin{defn}
A \emph{geometric functional}
defined on a measurable class $ \mathcal{C} $ of configurations is a measurable map
defined for all pairs $ (\marked{x}, \marked{\X}) $, where $ \marked{\X} \in \mathcal{C} $
and $ \marked{x} \in \marked{\X} $. If the class $ \mathcal{C} $ is not specified, we shall
take the class of all \emph{finite} configurations. Another example of a class that we
shall frequently consider are all configurations on a domain $ \Omega $.
\end{defn}

Although the main object of our paper will be real-valued functionals, we shall also need
$ [0, \infty] $-valued as well as even set-valued functionals.

Now we turn to
\emph{stabilization}, which plays an essential role in all that follows.

\begin{defn}
\label{Stab}
A geometric functional $ \xi $ \emph{stabilizes}
at $ \marked{x} $ with respect to a
configuration $ \marked{\X} \subseteq \marked{\Omega} $ inside a domain $ \Omega $,
if there exists a finite $ \rho \ge 0 $, such that:
\begin{equation}
\label{eq:Stab}
 \xi(\marked{x}, \marked{\Y}) = \xi(\marked{x} , \marked{\X} \cap B_\rho(x))
\end{equation}
for all finite configurations $ \marked{\Y} \subseteq \marked{\Omega} $ with $ \marked{\Y} \cap B_\rho(x)
= \marked{\X} \cap B_\rho(x) $.
In other words, the interaction between $ \marked{x} $ and a point set
is unaffected by changes outside $ B_\rho(x) $. In particular, for $ r \ge \rho $,
$ \xi(\marked{x}, \marked{\X} \cap B_r(x)) $ does not depend on $ r $.
If \eqref{eq:Stab} holds, we shall say that $ \xi $ stabilizes at $ \marked{x} $
\emph{within radius $ \rho $} with respect to $ \marked{\X} $ inside $ \Omega $.
If $ \Omega $ is not specified, we shall take $ \Omega = \R^d $.

We shall say that $ \xi $ is \emph{stabilizing} with respect to
configuration $ \marked{\X} $ if it stabilizes with respect to $ \marked{\X} $
at each $ \marked{x} \in \marked{\X} $ (all inside a domain $ \Omega $).

A \emph{radius of stabilization} for a geometric functional $ \xi $ inside
$ \Omega $ is a $ [0, \infty] $-valued geometric functional $ R $ defined on
configurations on $ \marked{\Omega} $, such that for all suitable $ \marked{\X} $
and all $ \marked{x} \in \marked{\X} $, $ \xi $ stabilizes
at $ \marked{x} $ within radius $ R(\marked{x}, \marked{\X}) $, provided that
$ R(\marked{x}, \marked{\X}) $ is finite.
\end{defn}

Geometric functionals, which are initially defined on finite configurations,
can be extended to all configurations (i.~e., locally finite sets) with respect
to which they stabilize. More precisely, if a geometric functional $ \xi $ stabilizes
at $ \marked{x} $ with respect to configuration $ \marked{\X} $, we can define:
\begin{equation}
\label{eq:StabExt}
 \xi(\marked{x}, \marked{\X}) := \lim_{r \to \infty} \xi(\marked{x}, \marked{\X} \cap B_r(x)) \, .
\end{equation}
Clearly, the extended functional stabilizes within the same radius.

Among others, this extension allows us to consider geometric functionals
on Poisson point processes. For a locally integrable function
$ f \Colon \R^d \to [0, \infty) $, denote by $ \marked{\Po}_f $ a Poisson point
process with intensity $ f \otimes \P_\Marks $ (sometimes, we shall abuse the
notation slightly and identify $ f $ with the corresponding measure $ f(x) \,\dl x $).
By locally integrable function, we mean that $ f \in L^1_{\textrm{loc}}(\R^d) $, i.~e.,
for each $ x \in \R^d $, there exists some neighborhood $ U $, such that
$ f \in L^1(U) $.

\begin{rem}
It would also be desirable extend the results to the case when the
Poisson input is replaced by binomial (i.~e., a point process
consisting of independent and identically distributed random points)
-- so called \emph{de-Poissonization}.
However, this is beyond the scope of the present paper. In particular,
the crucial construction~\eqref{eq:clust:PPPEx} seems not to work. Instead,
one can either attempt to find a more sophisticated construction or
refer to suitable convergence of binomial point processes to Poisson.
In this context, coupling can be used to advantage: see
Lemma~4.2 of \cite{PY1} and Lemma~3.2 of \cite{Pe2007LLN}.
\end{rem}

Throughout this paper, the letter $ \xi $ will be reserved for geometric functionals
(but we shall also consider geometric functionals denoted by other letters). For
a geometric functional denoted by $ \xi $, define its scaled versions by:
$$
 \xi_\la(\marked{x}, \marked{\X}) = \xi \bigl( \la^{1/d} \marked{x}, \la^{1/d}
  \marked{\X} \bigr)
$$
(recalling that the $ \la^{1/d} $ scaling only modifies the spatial component, not
the mark). Our principal objects of interest are the following random point measures
on $ \R^d $:
\begin{equation}
\label{eq:muxilaka}
 \muxilaka := \sum_{\marked{x} \in \marked{\Po}_{\la \ka}}
  \xi_\la (\marked{x}, \marked{\Po}_{\la \ka}) \, \delta_x
  \> ; \quad \la > 0 \, ,
\end{equation}
or, equivalently,
\begin{equation*}
 \langle f, \muxilaka \rangle := \int_{\marked{\R}^d} f(x) \,
  \xi_\la (\marked{x}, \marked{\Po}_{\la \ka})
  \, \marked{\Po}_{\la \ka}(\dl \marked{x}) \, ,
\end{equation*}
along with $ \barmuxilaka := \muxilaka - \E \muxilaka$, the centered versions of
$ \muxilaka $. Thus, in $ \muxilaka $ and $ \barmuxilaka $, we shall always
refer to a $ \R $-valued geometric functional denoted by $ \xi $ and a probability
density function on $ \R^d $ denoted by $ \ka $, which will be suppressed in the
notation.

Throughout this paper, unless specified otherwise, the letter $ R $ will denote a
radius of stabilization for a geometric functional denoted by $ \xi $. Moreover, for
$ \la > 0 $, denote by $ R_\la $ a non-negative geometric functional defined on locally
finite sets, such that $ \la^{-1/d} R_\la $ is a radius of stabilization for $ \xi_\la $
inside a domain denoted by $ \Omega $, or, equivalently, such that the functional
$ (\marked{x}, \marked{\X}) \mapsto R_\la \bigl( \la^{-1/d} \marked{x}, \la^{-1/d} \marked{\X} \bigr) $
is a radius of stabilization for $ \xi $ inside $ \la^{1/d} \Omega $.
Of course, if $ \Omega = \R^d $, one can simply put
$ R_\la(x, \X) := R \bigl( \la^{1/d} \marked{x}, \la^{1/d} \marked{\X} \bigr) $.

%
%

In most of our results, we restrict attention to translation invariant
functionals.

\begin{defn}
A geometric functional $ \xi $ is \emph{translation invariant} if
$ \xi(\marked{x} + z, \marked{\X} + z) = \xi(\marked{x}, \marked{\X}) $ for all
$ \marked{x} \in \marked{\R}^d $, all finite configurations $ \X \subset \R^d $
and all $ z \in \R^d $.
\end{defn}

\begin{rem}
Though most of our main results are formulated under the assumption
of translation invariance, the latter is not crucial. In particular,
it is not required in our key Lemma~\ref{growthlemm}. However, our
main results are derived by combining Lemma~\ref{growthlemm} with
\eqref{eq:Varlim}, which is known to hold under either translation
invariance or more complicated assumptions (see \cite{Pe2007CLT}).
Anyway, translation invariance is the case in all our applications.
\end{rem}

%

%

\subsection{Known results}\label{ssc:KnRe}

From \cite{Pe2007CLT} (see also earlier papers \cite{BY2,PY4}), we know
that under relatively mild assumptions (compared to those stated in
Subsection~\ref{ssc:DevProb}), the one and two point correlation functions for
$\xi_{\la}(\marked{x}, \marked{\Po}_{\la \ka })$ converge in the large $\la$ limit, which establishes
the corresponding asymptotics for integrals
$ \E \langle f, \muxilaka \rangle $ and $ \sigmaxilaka^2[f] := 
\Var \bigl( \langle f, \muxilaka \rangle \bigr) $
(\emph{we shall always take $ \sigmaxilaka[f] \ge 0 $}).
Moreover, under additional assumption that $ \ka $ has bounded support, it is known that the
limit of the re-normalized measures $ \la^{-1/2} \barmuxilaka $ is a generalized mean zero
Gaussian field in the sense that the finite-dimensional distributions of
$ \la^{-1/2} \barmuxilaka $ over $ f \in \B(\R^d) $ converge to those of a Gaussian
field (for test functions, we may take all $ f \in \B(\R^d) $, but only the ones
with support coinciding with the support of $ \ka $ really matter).

To state the results in formal terms, we introduce various assumptions. The first one
will be imposed on the density $ \ka $.

%

\begin{assp}{$\Dens$ (\textit{Density})}
$ \ka $ is bounded and Lebesgue-almost everywhere continuous.
\end{assp}

\noindent
Next, we list two assumptions imposed on a family $ (g_\la)_{\la > \la_0} $
of geometric functionals either taking values in $ \R $ or in $ [0, \infty] $.

\begin{assp}{$\Momla(p, \ka)$ (\textit{$ p $-th Moment})}
$ \dst \limsup_{\la \to \infty} \esssup_{\ka(x) \,\dl \marked{x}}
  \E \bigl| g_\la(\marked{x}, \marked{\Po}_{\la \ka}) \bigr|^p < \infty $.
\end{assp}

\begin{assp}{$\MomlaOne(p, \ka)$ (\textit{$ p $-th Moment with One additional point})}\newline
$ (g_\la)_\la $ satisfies Assumption~$\Momla(p, \ka)$ and
$ \dst
 \limsup_{\la \to \infty} \esssup_{\ka(x) \,\dl \marked{x} \otimes \ka(y) \,\dl \marked{y}}
  \E \bigl| g_\la(\marked{x}, \marked{\Po}_{\la \ka} \cup \{ \marked{y} \}) \bigr|^p < \infty \, .
$
\end{assp}

Now recall our conventions on $ R $ and $ R_\la $ from the preceding subsection.
In most of our results, we shall need that $ R $ satisfies Assumption~$\MomlaOne(q, \ka)$
for some $ q $. This is analogous, but not entirely equivalent to the \emph{power-law stabilization
of order $ q $} as defined in \cite{Pe2005,Pe2007CLT,PY6}.

Below we state an assumption formally imposed on $ R $, a radius of stabilization
for $ \xi $. Throughout this subsection, let $ T $ denote a generic random mark with
distribution $ \P_\Marks $, independent of all other random objects we consider.

\begin{assp}{$\FinHom(\tau)$ (\textit{Finiteness with respect to Homogeneous process})}\newline
$ R $ is translation invariant and $ \P \bigl( R((\0, T), \marked{\Po}_\tau) < \infty \bigr) = 1 $.
Here, $ \tau $ denotes a positive real number, not a function. Notice that if $ R $ is not
a priori translation invariant, it can always be replaced by an appropriate translation
invariant radius of stabilization.
\end{assp}

%
%

\begin{rem}
\label{tauHom}
Under translation invariance, Assumption~$\FinHom(\tau)$ is essentially equivalent to
what is called \emph{$ \tau $-homogeneous stabilization} in \cite{Pe2005,Pe2007CLT}.
More precisely, a translation invariant geometric functional $ \xi $ is
$ \tau $-homogeneously stabilizing if:
\begin{equation}
\label{eq:tauHom}
    \P \bigl( R((\0, T), \marked{\Po}_\tau) < \infty \bigr)
  = \P \bigl( R((\0, T), \marked{\Po}_\tau \cup \{ \marked{x} \}) < \infty \bigr)
  = 1
\end{equation}
for all $ \marked{x} \in \marked{\R}^d $. However, if the latter only holds for \emph{almost}
all $ \marked{x} $, say, for all $ \marked{x} \in \marked{\R}^d \setminus (\{ \0 \}) $
outside a set $ \marked{N} \subset \marked{\R}^d \setminus (\{ \0 \}) $ of measure zero,
one can modify $ \xi((\0, t), \marked{\X}) $ to $ \xi((\0, t), \marked{\X} \setminus \marked{N}) $
and $ R((\0, t), \marked{\X}) $ to $ R((\0, t), \marked{\X} \setminus \marked{N}) $, keeping
translation invariance. Replacing $ \marked{\X} $ with a marked Poisson point process
$ \marked{\Po}_f $, $ \xi $ and $ R $ have then only been modified on a null event.

Once relaxed to the `almost everywhere' condition, the additional point $ \marked{x} $
can now be left out. This is because for each $ r > 0 $, a Poisson point process can be
represented as a union of a set $ \marked{\U}_{N,r} $ and a Poisson point process
on the complement of $ \marked{B}_r(\0) := B_r(\0) \times \Marks $, where
$ N $ is a Poisson random variable and where for each $ n \in \N_0 $,
$ \marked{\U}_{n,r} $ denotes a set of $ n $ independent points in $ \marked{B}_r(\0) $.
Saying that the first probability in \eqref{eq:tauHom} equals one
is equivalent to saying that $ \P \bigl( R((\0, T), \marked{\U}_{n,r} \cup (\marked{\Po}_\tau
\setminus \marked{B}_r(\0))) < \infty \bigr) = 1 $ for all $ r > 0 $ and all $ n \in \N_0 $,
while for the second probability, this reduces to the statement that the latter
holds for all $ r > 0 $ and all $ n \in \N $. Therefore, the second probability in
\eqref{eq:tauHom} is in fact redundant. To summarize, \emph{any statement on the distributions
of the random measures $ \muxilaka $ which holds under \eqref{eq:tauHom} still holds under
Assumption~$\FinHom(\tau)$ only.}
\end{rem}

\begin{defn}
\label{ExpDec}
A family $ (g_\la)_{\la > \la_0} $ of geometric functionals enjoys
\emph{$ \ka $-almost exponential decay} if there exist $ a \ge 0 $ and $ b > 0 $, such
that $ \esssup_{\ka(x) \,\dl \marked{x}}
\P \bigl( \bigl| g_\la(\marked{x}, \marked{\Po}_{\la \ka}) \bigr| > t \bigr) \le a \, e^{-bt} $
for all $ t \ge 0 $. If this is satisfied for $ g_\la = R_\la $, where, by our convention,
$ \la^{-1/d} R_\la $ is a radius of stabilization for $ \xi_\la $, we shall say that $ \xi $
is \emph{$ \ka $-almost exponentially stabilizing inside the upscaled domain $ \Omega $}.
If $ \Omega $ is not specified, we shall take $ \Omega = \R^d $.
\end{defn}

Next, we list three further assumptions imposed on $ \xi $. Recall that
$ \xi_\la(\marked{x}, \marked{\X}) = \xi \bigl( \la^{1/d} \marked{x},
\la^{1/d} \marked{\X} \bigr) $.

\begin{assps}{$\ConvergWLLN(p, \ka)$ (\textit{Convergence in the sense of WLLN})}
$ \ka $ satisfies Assumption~$\Dens$, $ \xi $ is translation invariant and
the family $ (\xi_\la)_{\la > 0} $ satisfies Assumption~$\MomlaOne(p', \ka)$ for
some $ p' > p $. Moreover, there exists a radius of stabilization $ R $ for $ \xi $
satisfying Assumption~$\FinHom(\ka(x))$ for $ \ka $-almost all $ x \in \R^d $.
\end{assps}

\begin{assps}{$\ConvergVar(\ka)$ (\textit{Convergence of Variance})}
$ \ka $ satisfies Assumption~$\Dens$ and $ \xi $ is translation invariant.
Moreover, there exists a radius of stabilization $ R $ for $ \xi $ satisfying
Assumption~$\FinHom(\ka(x))$ for $ \ka $-almost
all $ x \in \R^d $. Finally, there exist $ p, q > 0 $ with $ 2/p + d/q < 1 $, such
that the family $ (\xi_\la)_{\la > 0} $ satisfies Assumption~$\MomlaOne(p, \ka)$ and
such that there exists a family $ (R_\la)_{\la > \la_0} $ of appropriate radii of
stabilization satisfying Assumption~$\Momla(q, \ka)$.
\end{assps}

\begin{assps}{$\ConvergCLT(\ka)$ (\textit{Convergence in the sense of multivariate CLT})}
$ \ka $ satisfies Assumption~$\Dens$ and has bounded support, and $ \xi $ is
translation invariant. Moreover, there exists a radius of stabilization $ R $ for
$ \xi $ satisfying Assumption~$\FinHom(\ka(x))$ for $ \ka $-almost all $ x \in \R^d $.
Finally, there exists a family $ (R_\la)_{\la > \la_0} $ of appropriate radii of
stabilization, such that either $ (\xi_\la)_{\la > 0} $ satisfies Assumption~$\MomlaOne(p, \ka)$
for some $ p > 2 $ and $ (R_\la)_{\la > \la_0} $ enjoys almost uniform exponential decay
for $ \ka $, or such that $ (\xi_\la)_{\la > 0} $ satisfies Assumption~$\MomlaOne(p, \ka)$ for
some $ p > 3 $ and $ (R_\la)_{\la > \la_0} $ satisfies Assumption~$\Momla(q, \ka)$
for some $ q > d(150 + 6/p) $.
%
%
\end{assps}

\noindent
Now we are ready to list the following known results:
\begin{itemize}
 \item Take $ p = 1 $ or $ p = 2 $. If $ \xi $ satisfies Assumptions~$\ConvergWLLN(p, \ka)$,
 we have for all $ f \in \B(\R^d) $:
 \begin{equation}
 \label{eq:Elim}
  \frac{\langle f, \muxilaka \rangle}{\la} \xrightarrow[\la \to \infty]{L^p}
   \int_{\R^d} f(x) \E \bigl[ \xi((\0, T), \marked{\Po}_{\ka(x)}) \bigr] \, \ka(x) \,\dl x \, ,
 \end{equation}
where $ T $ is a generic random mark with distribution $ \P_{\Marks} $, independent
of $ \marked{\Po}_{\ka(x)} $,
 \item If $ \xi $ satisfies Assumptions~$\ConvergVar(\ka)$, there exists a measurable
 function $ \Vxi \Colon [0, \infty) \to [0, \infty) $, depending on $ \xi $ but not
 on $ \ka $, such that for all $ f \in \B(\R^d) $, the variance $ \sigmaxilaka^2[f] =
 \Var \bigl( \langle f, \muxilaka \rangle \bigr) $ satisfies:
 \begin{equation}
 \label{eq:Varlim}
  \lim_{\la \to \infty} \frac{\sigmaxilaka^2[f]}{\la} =
   \sigmaxika^2[f] :=
   \int_{\R^d} f^2(x) \, \Vxi(\ka(x)) \, \ka(x) \,\dl x \, ,
 \end{equation}
 (like for $ \sigmaxilaka[f] $, we shall always take $ \sigmaxika[f] \ge 0 $).
 The function $ \Vxi $ is defined by:
 \begin{equation}
 \label{eq:Vxi}
 \begin{split}
  \Vxi(\tau) &:= \E \Bigl[ \xi \bigl( (\0, T), \marked{\Po}_\tau \bigr)^2 \Bigr] + \null
 \\ & \qquad \null
   + \tau \int_{\R^d} \biggl\{ \E \Bigl[
     \xi \bigl( (\0, T), \marked{\Po}_\tau \cup \{ (z, T') \} \bigr) \,
     \xi \bigl( (z, T'), \marked{\Po}_\tau \cup \{ (\0, T) \} \bigr)
   \Bigr] - \Bigl[ \E \xi \bigl( (\0, T), \marked{\Po}_\tau \bigr) \Bigr]^2 \biggr\} \,\dl z
 \end{split}
 \end{equation}
 for $ \tau > 0 $; for convenience, we set $ \Vxi(0) := 0 $. Similarly as before,
 $ T $ and $ T' $ are generic random marks with distribution $ \P_{\Marks} $, independent
 of each other as well as of $ \marked{\Po}_{\tau} $.
 \item If $ \xi $ satisfies Assumptions~$\ConvergCLT(\ka)$, the finite-dimensional distributions
 of $ \la^{-1/2} \barmuxilaka $ converge in distribution as $\la \to \infty$
 to those of a generalized mean-zero Gaussian field with covariance kernel
 \begin{equation}
 \label{eq:Gausslim}
  (f_1,f_2) \mapsto \int_{\R^d} f_1(x) \, f_2(x) \, \Vxi(\ka(x))
   \, \ka(x) \,\dl x \, .
 \end{equation}
\end{itemize}

The above results capture the weak law of large numbers
and the Gaussian limit  behavior of the re-normalized measures
$ \la^{-1/2} \barmuxilaka $. For a special case (special mainly with respect to
test functions), which is actually a good starting point, they are all proved
in \cite{BY2} (Theorem~2.1). For the general case (in fact,
much more general that we consider), see \cite{Pe2005} (Theorems~2.1--2.3),
noting that we may relax the supremum over $ \la \ge 1 $ along with the supremum
over the measure-theoretic support of $ \ka $ stated in the conditions given ibidem
to $ \limsup_{\la \to \infty} $ and the $ \ka $-essential supremum. Furthermore,
we can relax $\tau$-homogeneous stabilization to Assumption~$\FinHom(\tau)$: see
Remark~\ref{tauHom}. The convergence of the variance \eqref{eq:Varlim} and
the central limit theorem are also proved in \cite{Pe2007CLT} (Theorems~2.1 and 2.2,
but see also the remarks below Theorem~2.3).

The above-mentioned central limit theorem is based on Theorems~2.3 and
2.5 of \cite{PY6}, where also the curious condition $ q > d(150 + 6/p) $
arises from. The latter results also provide explicit univariate bounds.
An extension to multivariate bounds is derived in \cite{PW}.
Under a different set-up, a multivatiate CLT is also proved in \cite{Pe2}.

As to Theorem~2.2 of \cite{Pe2005} and its counterpart, Theorem~2.1 of \cite{Pe2007CLT},
it is worth mentioning that a factor $ d $ has been accidentally dropped from their
statement, so that there should be $ q > dp/(p-2) $ rather than $ q > p/(p-2) $, just
like in Assumptions~$\ConvergVar(\ka)$: see Lemma~5.2 of \cite{Pe2005} and Lemma~4.2 of
\cite{Pe2007CLT}.

\subsection{Non-degeneracy of the limiting variance}\label{ssc:NDeg}

In view of \eqref{eq:Varlim}, it is important to distinguish between
degenerate and non-degenerate limiting variance, i.~e.,
$ \sigmaxika[f] = 0 $ and $ \sigmaxika[f] > 0 $.
This issue is heavily discussed in \cite{PY1,PY2,BY2}. A further fruitful general
result is derived in \cite{PW}. However, the verification of the conditions guaranteeing
non-degeneracy given in the latter paper might be somewhat involved. Therefore,
we take one step forward and establish a new criterion for the non-degeneracy,
which seems to be easier to verify. Moreover, the result of \cite{PW} is not
supported by an example where earlier results do not apply. We here provide one:
see Example~\ref{NN:ExpLength}.

Like the earlier result from \cite{PY1}, Theorem~2.2 of \cite{PW} is based on the
\emph{add-one cost} of the total mass functional. The \emph{total mass functional}
of a geometric functional $ \xi $ is defined by
$ \Hxi(\marked{\X}) := \sum_{\marked{x} \in \marked{\X}} \xi(\marked{x}, \marked{\X}) $,
while its add-one cost is defined by $ \Deltaxi(\marked{x}, \marked{\X}) :=
\Hxi(\marked{\X}) - \Hxi(\marked{\X} \setminus \{ \marked{x} \}) $,
provided that $ \marked{x} \in \marked{\X} $. Notice that $ \Deltaxi $ is also
a geometric functional. Extending it by our convention from
Subsection~\ref{ssc:terminology}, we have $ \Deltaxi(\marked{x}, \marked{\X})
= \Hxi(\marked{\X} \cup \{ \marked{x} \}) - \Hxi(\marked{\X}) $ for
$ \marked{x} \notin \marked{\X} $. The name `add-one cost' is due to the latter
case.

In order to derive non-degeneracy, a new concept of stabilization, called
\emph{external stabilization}, is introduced in \cite{PW}. We here rewrite the
corresponding definition for marked configurations and, in addition, introduce
the concept of \emph{basic external stabilization}.

\begin{defn}
\label{ExtStab}
Let $ \rho > 0 $. A configuration $ \marked{\X} $ is said to be
\emph{basically $ \rho $-externally stable} at a point $ \marked{x} \in \marked{\R}^d $
with respect to a geometric functional $ \xi $ if
\begin{equation}
\label{eq:ext:out}
 \xi(\marked{z}, \marked{\Y} \cup \{ \marked{x} \}) = \xi(\marked{z}, \marked{\Y} \setminus \{ \marked{x} \})
\end{equation}
for all finite $ \marked{\Y} $ with $ \marked{\Y} \cap B_\rho(\marked{x})
= \marked{\X} \cap B_\rho(\marked{x}) $ and all $ \marked{z} \in \marked{\Y} \setminus
B_\rho(\marked{x}) $.

The configuration $ \marked{\X} $ is said to be \emph{$ \rho $-externally stable} at 
$ \marked{x} $ with respect to $ \xi $ if the following three conditions hold:
first, $ \marked{\X} $ is $ \rho $-externally stable at $ \marked{x} $ with respect to
$ \xi $. Second, $ \xi $ stabilizes at $ \marked{x} $ within radius $ \rho $ with respect
to $ \marked{\X} $. Third,
\begin{equation}
\label{eq:ext:in}
 \xi \bigl( \marked{y}, \marked{\Y} \cup \{ \marked{x} \} \bigr)
   - \xi \bigl( \marked{y}, \marked{\Y} \setminus \{ \marked{x} \} \bigr) =
 \xi \bigl( \marked{y}, (\marked{\X} \cap B_\rho(\marked{x})) \cup \{ \marked{x} \} \bigr)
   - \xi \bigl( \marked{y}, (\marked{\X} \cap B_\rho(\marked{x})) \setminus \{ \marked{x} \} \bigr)
\end{equation}
for all finite $ \marked{\Y} $ with $ \marked{\Y} \cap B_\rho(\marked{x})
= \marked{\X} \cap B_\rho(\marked{x}) $ and all $ \marked{y} \in \marked{\Y}
\cap B_\rho(\marked{x}) $.

We shall say that $ \marked{\X} $ is \emph{externally stable} at $ \marked{x} $
with respect to $ \xi $ if it is $ \rho $-externally stable at $ \marked{x} $
for some finite $ \rho $.
\end{defn}

\begin{rem}
\label{ExtStabDelta}
If $ \marked{\X} $ is $ \rho $-externally stable at $ \marked{x} $ with respect to $ \xi $, then
$ \Deltaxi $ stabilizes at $ \marked{x} $ within radius $ \rho $.
\end{rem}

\begin{rem}
\label{ExtStabInc}
If $ \marked{\X} $ is $ \rho $-externally stable and $ r \ge \rho $, then
$ \marked{\X} $ is also $ r $-externally stable.
\end{rem}

\begin{rem}
\label{ExtStabMatch}
If $ \marked{\X} $ is $ \rho $-externally stable at $ \marked{x} $ and
$ \marked{\Y} \cap B_\rho(\marked{x}) = \marked{\X} \cap B_\rho(\marked{x}) $, then
$ \marked{\Y} $ is also $ \rho $-externally stable at $ \marked{x} $.
\end{rem}

\begin{rem}
Condition~\eqref{eq:ext:in} holds provided that $ \xi $ stabilizes at $ \marked{y} $
with respect to $ \marked{\X} \cup \{ \marked{x} \} $ and to $ \marked{\X} \cup \{ \marked{x} \} $
within radius $ \rho - \| y - x \| $. On the other hand, condition~\eqref{eq:ext:out} cannot
be simply derived from the usual radius of stabilization. Therefore it is referred to as
basic external stabilization.
\end{rem}

In view of the preceding remark, the following construction of the external
radius of stabilization is now immediate.

\begin{prop}
\label{StabExtStab}
Let $ \xi $ be a geometric functional with radius of stabilization $ R $.
Take $ \rho > 0 $, $ \marked{x} \in \marked{\R}^d $ and a configuration
$ \marked{\X} $. If $ \marked{\X} $ is basically $ \rho $-externally stable
at $ \marked{x} $ with respect to $ \xi $, it is also $ r $-externally
stable at $ \marked{x} $ with respect to $ \xi $, where:
$$
 r := \rho + \max \bigl\{ R(\marked{y}, \marked{\X} \cup \{ \marked{x} \}),
                          R(\marked{y}, \marked{\X} \setminus \{ \marked{x} \})
        \sth \marked{y} \in (\marked{\X} \cup \{ \marked{x} \}) \cap B_\rho(\marked{x}) \bigr\} \, .
$$
In particular, if $ \marked{\X} $ is basically $ \rho $-externally stable
at $ \marked{x} $ with respect to $ \xi $ and $ \xi $ stabilizes with respect
to $ \marked{\X} \cup \{ \marked{x} \} $ and $ \marked{\X} \setminus \{ \marked{x} \} $
at all $ \marked{y} \in (\marked{\X} \cup \{ \marked{x} \}) \cap B_\rho(\marked{x}) $,
then $ \marked{\X} $ is externally stable at $ \marked{x} $ with respect to $ \xi $.\qed
\end{prop}

In the rest of this subsection as well as in Subsection~\ref{ssc:NDeg:Proof}, we
fix a translation invariant geometric functional $ \xi $, let $ \Hxi $ be the total mass
functional of $ \xi $ and let $ \Deltaxi $ be its add-one cost.
We also recall the convention $ \xi_\la(\marked{x}, \marked{\X}) :=
\xi_\la \bigl( \la^{1/d} x, \la^{1/d} \X \bigr) $; set also
$ \Deltaxi_\la(\marked{x}, \marked{\X}) := \Deltaxi_\la \bigl( \la^{1/d} x, \la^{1/d} \X \bigr) $.
Let $ R $ denote a radius of stabilization for $ \xi $ and let
$ \la^{-1/d} R_\la $ be a radius of stabilization for $ \xi_\la $. Finally,
let $ T $ denote a generic random mark with distribution $ \P_\Marks $, independent
of all other random variables.

Now we state the version of Theorem~2.2 of \cite{PW} for marked configurations.
One can easily check that the proof given ibidem still carries
through.
%

\begin{theo}
\label{NDeg:PW}
Suppose that $ \xi $ is translation invariant and that there exists
$ \rho > 0 $, such that with strictly positive probability, the marked
homogeneous Poisson process $ \marked{\Po}_1 $ is $ \rho $-externally
stable at $ (\0, T) $ and $ \Deltaxi \bigl( (\0, T), $
\newline
$ \marked{\Po}_1 \cap B_\rho(\0) \bigr)
\ne 0 $. Next, take a probability density function $ \ka $ on $ \R^d $ and
a function $ f \in \B(\R^d) $, which is Lebesgue-almost everywhere
continuous. Suppose that $ f \ka $ is not Lebesgue-almost everywhere zero.
Let $ X $ be a random variable with density $ \ka $.
Suppose that for some $ s > 2 $, we have:
\begin{equation}
\label{eq:NDeg:PW:DeltafMom}
 \limsup_{\la \to \infty} \E \biggl|
   \sum_{\marked{x} \in \marked{\Po}_{\la \ka} \cup \{ (X, T) \}}
     f(x) \, \xi\bigl( \marked{x}, \marked{\Po}_{\la \ka} \cup \{ (X, T) \} \bigr) -
   \sum_{\marked{x} \in \marked{\Po}_{\la \ka} \setminus \{ (X, T) \}}
     f(x) \, \xi\bigl( \marked{x}, \marked{\Po}_{\la \ka} \setminus \{ (X, T) \} \bigr)
 \biggr|^s < \infty \, .
\end{equation}
In addition, suppose either that $ f $ is an indicator function of a measurable
subset of $ \R^d $ with vanishing boundary (i.~e., the Lebesgue measure of the
boundary equals zero) or that for all $ K > 0 $ and $ \ka $-almost all $ x $, we
have:
\begin{equation}
\label{eq:NDeg:PW:xiMom}
 \int_{B_K(\0)} \E \bigl| \xi(\marked{y}, \marked{\Po}_{\ka(x)} \cup \{ (\0, T) \})
   - \xi(\marked{y}, \marked{\Po}_{\ka(x)}) \bigr| \,\dl \marked{y} < \infty \, .
\end{equation}
Then $ \liminf_{\la \to \infty} \sigmaxilaka^2[f]/\la  > 0 $.\qed
\end{theo}

As already mentioned, the verification of the conditions required in the
preceding result might be somewhat involved. In particular, this holds for the moment
conditions \eqref{eq:NDeg:PW:DeltafMom} and \eqref{eq:NDeg:PW:xiMom}. The main goal of
this subsection is to show that these conditions can be simply
replaced by Assumptions~$\ConvergVar(\ka)$ introduced in Subsection~\ref{ssc:KnRe},
taking $ \ka $ to be the uniform density on a suitable domain and restricting attention
to the case $ f \equiv 1 $. Observe first that in this case, the moment condition
\eqref{eq:NDeg:PW:xiMom} can be left out, while the moment condition \eqref{eq:NDeg:PW:DeltafMom}
reduces to:
\begin{equation}
\label{eq:NDeg:PW:DeltaMom}
 \limsup_{\la \to \infty} \E \bigl| \Deltaxi_\la \bigl( (X, T), \marked{\Po}_{\la \ka} \bigr) \bigr|^s
   < \infty \, .
\end{equation}
Considering only the case $ f \equiv 1 $ is \emph{not} a big restriction:
under Assumptions~$\ConvergVar(\ka)$, formula~\eqref{eq:Varlim} allows us to derive
the limiting variance from the function $ \Vxi $; notice that for each $ \tau > 0 $,
$ \Vxi(\tau) $ is precisely the limiting variance for $ f \equiv 1 $, taking
$ \ka $ to be a suitable uniform density (see also Remark~\ref{MomHomUnif}).

For convenience, we list two additional assumptions imposed on
a family $ (g_\la)_{\la > \la_0} $ of geometric functionals. Below we
show that they are essentially equivalent to Assumptions~$\Momla$ and
$\MomlaOne$; moreover, with proper parameters and one additional
assumption, they are also equivalent to Assumptions~$\ConvergVar(\ka)$.

\begin{assp}{$\MomHom(p, \tau, \Omega)$ (\textit{$ p $-th Moment with respect to
Homogeneous process restricted to $ \Omega $})}\newline
$ \dst \limsup_{\la \to \infty} \esssup_{\1(x \in \Omega) \, \dl \marked{x}}
  \E \bigl| g_\la(\marked{x}, \marked{\Po}_{\la \tau} \cap \Omega) \bigl|^p < \infty
$.
\end{assp}

\begin{assp}{$\MomHomOne(p, \tau, \Omega)$ (\textit{$ p $-th Moment with respect
to Homogeneous process restricted to $ \Omega $ with one additional point})}
$ (g_\la)_{\la > \la_0} $ satisfies Assumption~$\MomHom(p, \tau, \Omega)$ and\newline
$ \dst \limsup_{\la \to \infty} 
  \esssup_{\1(x, y \in \Omega) \, \dl \marked{x} \otimes \dl \marked{y}}
  \E \bigl| g_\la \bigl( \marked{x}, (\marked{\Po}_{\la \tau} \cap \Omega)
    \cup \{ \marked{y} \} \bigr) \bigl|^p < \infty
$.
\end{assp}

\begin{rem}
\label{MomHomUnif}
Letting $ v := \vol(\Omega) $, $ \Omega^* := (\tau v)^{-1/d} \Omega $,
$ g_{\la^*}^*(\marked{x}^*, \marked{\X}^*) := g_{\la} \bigl( (\tau v)^{1/d} \marked{x}^*,
(\tau v)^{1/d} \marked{\X}^* \bigr) $, where $ \la = \la^*/(\tau v) $,
\emph{the family $ (g_\la)_{\la \ge \la_0} $ satisfies Assumption~$\MomHom(p, \tau, \Omega) $
if and only if the family $ (g^*_{\la^*})_{\la^* \ge \la^*_0} $ satisfies
Assumption~$\Momla(p, \ka)$}, taking $ \ka $ to be the uniform density on $ \Omega^* $ and
$ \la^*_0 := \tau v \la_0 $. This is true because:
\begin{equation*}
\begin{split}
 g_\la(\marked{x}, \marked{\Po}_{\la \tau} \cap \Omega)
    &=  g_{\la^*/(\tau v)} \bigl( (\tau v)^{1/d} \marked{x}^*,
          \marked{\Po}_{\la^*/v} \cap (\tau v)^{1/d} \Omega^* \bigr)
    =  g_{\la^*}^* \bigl( \marked{x}^*, (\tau v)^{-1/d} \marked{\Po}_{\la^*/v} \cap \Omega^* \bigr)
  \eqd
\\
 &\eqd g_{\la^*}^* \bigl( \marked{x}^*, \marked{\Po}_{\la^* \tau} \cap \Omega^* \bigr)
    =  g_{\la^*}^* \bigl( \marked{x}^*, \marked{\Po}_{\la^* \ka} \bigr) \, ,
\end{split}
\end{equation*}
where $ \marked{x}^* := (\tau v)^{-1/d} \marked{x} $ and where $ \eqd $ denotes equivalence in distribution.
Similarly, \emph{the family $ (g_\la)_{\la \ge \la_0} $ satisfies Assumption~$\MomHomOne(p, \tau, \Omega) $
if and only if the family $ (g^*_{\la^*})_{\la^* \ge \la^*_0} $ satisfies
Assumption~$\MomlaOne(p, \ka)$}. Notice also that \emph{the family $ (\xi_\la)_{\la > 0} $ satisfies
Assumption~$\MomHom(p, \tau, \Omega)$, respectively $\MomHomOne(p, \tau, \Omega)$, if and only
if it satisfies Assumption~$\Momla(p, \ka)$, respectively $\MomlaOne(p, \ka)$}.

Moreover, keeping the relationship between $ \la $ and $ \la^* $, \emph{$ \la^{-1/d} R_\la $ is a radius
of stabilization for $ \xi_\la $ inside domain $ \Omega $ if and only if $ (\la^*)^{-1/d} R^*_{\la^*} $
is a radius of stabilization for $ \xi_{\la^*} $ inside $ \Omega^* $}, where
$ R_{\la^*}^*(\marked{x}^*, \marked{\X}^*) := R_{\la} \bigl( (\tau v)^{1/d} \marked{x}^*,
(\tau v)^{1/d} \marked{\X}^* \bigr) $. As a result,
\emph{if the family $ (\xi_\la)_{\la > 0} $ satisfies Assumption~$\MomHomOne(p, \tau, \Omega)$
and if there exists a radius $ R $ of stabilization for $ \xi $ satisfying Assumption~$\FinHom(\tau)$
and a suitable family $ (R_\la)_{\la > \la_0} $ of scaled radii of stabilization
satisfying Assumption~$\MomHom(q, \tau, \Omega)$, where $ 2/p + d/q < 1 $, then
$ \xi $ satisfies Assumption~$\ConvergVar(\ka)$}.
\end{rem}

As mentioned above, the main goal of this subsection is to show that the moment condition
\eqref{eq:NDeg:PW:DeltaMom} can be replaced by Assumptions~$\ConvergVar(\ka)$. In addition,
we also simplify the condition on non-vanishing $ \Delta $. In order to do this, we state
the following definition.

\begin{defn}
A predicate $ P $ defined on pairs $ (t, \marked{\X}) $, where $ t \in \Marks $ and
$ \marked{\X} $ is a finite configuration, is said to hold \emph{for notably many}
pairs $ (t, \marked{\X}) $ if there exists $ n \in \N_0 $ and a
$ \P_\Marks(\dl t) \otimes \dl \marked{x}_1 \otimes \cdots \otimes \dl \marked{x}_n $-nonnull
set, such that $ P(t, \{ \marked{x}_1, \ldots, \marked{x}_n \}) $ holds for all
$ (t, \marked{x}_1, \ldots, \marked{x}_n) $ in that set.
\end{defn}

\begin{rem}
In particular, for $ P(t, \marked{\X}) $ to hold for notably many
pairs $ (t, \marked{\X}) $, it suffices that
$ P(t, \emptyset) $ holds on a $ \P_\Marks(\dl t) $-nonnull set.
\end{rem}

Now we are ready to formulate our result on non-degeneracy. We defer the proof to
Subsection~\ref{ssc:NDeg:Proof}.

\begin{theo}
\label{NDeg}
Let $ \xi $ be translation invariant, let $ 0 < \tau < \infty $ and suppose that there
exists a radius of stabilization $ R $ for $ \xi $ satisfying Assumption~$\FinHom(\tau)$.
Let $ \Omega $ be a domain with $ 0 < \vol(\Omega) < \infty $ and with
$ \vol(\bd \Omega) = 0 $. Take $ p, q > 0 $ with $ 2/p + d/q < 1 $ and $ \la_0 > 0 $.
Suppose that the family $ (\xi_\la)_{\la > \la_0} $ satisfies Assumption~$\MomHomOne(p, \tau, \Omega)$
and that there exists a family $ (R_\la)_{\la > \la_0} $ satisfying
Assumption~$\MomHom(q, \tau, \Omega)$, such that for each $ \la > \la_0 $,
$ \la^{-1/d} R_\la $ is a radius of stabilization for $ \xi_\la $.
Finally, suppose that for notably many pairs $ (t, \marked{\X}) $,
$ \Deltaxi \bigl( (\0, t), \marked{\X} \bigr) \ne 0 $ and $ \marked{\X} $ is
externally stable at $ (\0, t) $ with respect to $ \xi $. With $ \Vxi $ as in
\eqref{eq:Vxi}, we then have $ \Vxi(\tau) > 0 $.
\end{theo}


\subsection{Estimates on deviation probabilities}
\label{ssc:DevProb}

Starting from the known results it is natural to investigate the asymptotics of
\emph{deviation probabilities} on a scale larger than that of the central limit
theorem. To this end we consider a fixed test-function 
$f \in \B(\R^d)$ and strive to get precise information on bounds
of the relative error
\begin{equation}
\label{eq:ldpprob}
 \frac{\P ( \langle f, \barmuxilaka \rangle \geq x)}{1- \Phi(x/\sigmaxilaka[f])},
 \mbox{    as well as    }
 \frac{\P ( \langle f, \barmuxilaka \rangle \leq -x)}{\Phi(-x/\sigmaxilaka[f])},
 \;\; x > 0, 
\end{equation}
where, as in the preceding Subsection~\ref{ssc:KnRe}, 
$\sigmaxilaka^2[f] $ denotes the variance and where, as usual,
$$
 \Phi(x) = \frac{1}{\sqrt{2 \pi}} \int_{- \infty}^x e^{-t^2/2} \, \dl t
$$
is the distribution function of the standard normal.
In particular, we are interested in conditions under which the relative error \eqref{eq:ldpprob}
converges to 1 uniformly in the interval $0 \leq x \leq F(\la)$, where $F(\la)$ is a
nondecreasing function such that $F(\la) \to \infty$. Of course $F(\la)$ will depend
not only on $\la$ but also on other characteristics of our models, in particular their
dimensionality. For the sake of readability, the dependence on other quantities is
suppressed in all of our notation.

Since we will refine the cumulant expansion method of \cite{BY2} to establish more precise
rates of growth on the cumulants, in both their scale parameter and their order, we will
be able to apply a powerful and general lemma on deviation probabilities due to Rudzkis,
Saulis and Statulevi\v{c}ius \cite{RSS}, whose version specialized for our purposes
is stated as Lemma~\ref{RSSLemma} in the sequel for the convenience of the reader.

Before formulating the results, we list the following two key assumptions imposed
on a family $ (g_\la)_{\la \ge \la_0} $ of geometric functionals:

\begin{assp}{$\MomGrPt(\alpha, \ka)$, $ \alpha \ge 0 $ (\textit{Moment Growth with additional Points})}
There exist $ A \ge 0 $ and $ q > 0 $, such that for all $ \la \ge \la_0 $,
all $ k \in \N $ and all $ r \le qk $,
\begin{equation}
\label{eq:MomGrPt}
 \esssup_{
  \ka(x_1) \,\dl \marked{x}_1 \otimes \cdots \otimes \ka(x_r) \,\dl \marked{x}_r
  }
   \E \bigl| g_\la(\marked{x}_1, \marked{\Po}_{\la \ka} \cup \{ x_1, \ldots, x_r \})
    \bigr|^k \, \ka(x_1) \,\dl \marked{x}_1
  \le A^k (k!)^\alpha
\end{equation}
(recall that $ \esssup_{f(x) \,\dl x} g(x) $ denotes the essential supremum of $ g $ with respect
to the Lebesgue measure restricted to the set $ \{ x \sth f(x) > 0 \} $).
\end{assp}

\begin{assp}{$\MomGrInt(\alpha, \ka)$, $ \alpha \ge 0 $ (\textit{Moment Growth with respect to Integral})}
There exists $ A \ge 0 $, such that for all $ \la \ge \la_0 $
and all $ k \in \N $,
\begin{equation}
\label{eq:MomGrInt}
 \int_{\R^d}
   \E \bigl| g_\la(\marked{x}, \marked{\Po}_{\la \ka}) \bigr|^k \, \ka(x) \,\dl \marked{x}
  \le A^k (k!)^\alpha \, .
\end{equation}
\end{assp}

\begin{rem}
Clearly, Assumption~$\MomGrInt(\alpha, \ka)$ is weaker than $\MomGrPt(\alpha, \ka)$.
\end{rem}

\begin{rem}
If there exists a family $ (R_\la)_{\la \ge \la_0} $ according to our convention from
Subsection~\ref{ssc:terminology}, satisfying Assumption~$\MomGrInt(\alpha, \ka)$ for
some $ \alpha \ge 0 $, then $ \xi $ is almost surely stabilizing with respect to
$ \Po_{\la \ka} $ for all $ \la \ge \la_0 $. As a result, the random measures
$ \muxilaka $ are almost surely defined.
\end{rem}

It is worth to point out two special cases. First, observe that if the functionals
$ |g_\la| $ are uniformly bounded, then the corresponding family satisfies
Assumptions~$\MomGrPt(0, \ka)$ and $\MomGrInt(0, \ka)$. When this
is true for $ g_\la = R_\la $, we shall say that $ \xi $ is
\emph{uniformly stabilizing inside the upscaled domain $ \Omega $}.
The second special case is when the family enjoys $ \ka $-almost
exponential decay (Definition~\ref{ExpDec}). In this case, we can estimate:
\begin{equation}
\label{eq:ExpDec1}
 \E \bigl| g_\la(\marked{x}, \marked{\Po}_{\la \ka}) \bigr|^k
  = \int_0^\infty k \, t^{k-1}
  \P \bigl( \bigl| g_\la(\marked{x}, \marked{\Po}_{\la \ka}) \bigr| > t \bigr) \,\dl t
  \le \frac{a \, k!}{b^k} \, .
\end{equation}
As a result, the family $ (g_\la)_{\la \ge \la_0} $ satisfies
Assumption~$\MomGrInt(1, \ka)$.

Now we state our central assumption, which is imposed on and will be used exclusively
for $ \xi $:


\begin{assps}{$\Gen(\gamma, \ka)$ (\textit{General conditions for our main results})}
$ \ka $ satisfies Assumption~$\Dens$ and there exist $ \alpha, \beta \ge 0 $,
$ \la_0 > 0 $ and a family $ (R_\la)_{\la \ge \la_0} $ according to our convention
from Subsection~\ref{ssc:terminology}, satisfying Assumption~$\MomGrInt(\beta, \ka)$
along with at least one of the following two conditions fulfilled:
\begin{list}{\textbullet}{\topsep 0pt\parsep 0pt\itemsep 0pt}
\item $ (\xi_\la)_{\la \ge \la_0} $ satisfies Assumption~$\MomGrInt(\alpha, \ka)$ and
$ 1 + \alpha + \beta d = \gamma $.
\item $ (\xi_\la)_{\la \ge \la_0} $ satisfies Assumption~$\MomGrPt(\alpha, \ka)$ and 
$ \max\{ \alpha, 1 \} + \beta d = \gamma $.
\end{list}
\end{assps}

The following result concerns deviation probabilities.

\begin{theo}\label{probabilities} 
Suppose that $ \xi $ satisfies Assumptions~$\Gen(\gamma, \ka)$ and take $ f \in \B(\R^d) $.
\newline
Let $ \sigmaxikainf[f] := \liminf_{\la \to \infty} \la^{-1/2} \sigmaxilaka[f] $.
\begin{enumerate}
\item[$(1)$] Suppose that, in addition, $ \sigmaxikainf[f] > 0 $. Then, for all
$ \la \ge \la_1 $ and $ 0 \leq x \leq C_1 \la^{(1+\gamma)/(1+2\gamma)} $, we have:
\begin{align}
\label{eq:large+}
 \left| \log \frac{\P\bigl( \langle f, \barmuxilaka \rangle \geq x \bigr)}%
  {1-\Phi(x/\sigmaxilaka[f])} \right|
  &\leq C_2 \left( \frac{1}{\la^{1/(2+4\gamma)}} + \frac{x^3}{\la^{(2+3\gamma)/(1+2\gamma)}} \right) \, ,
\\
\label{eq:large-}
 \left| \log \frac{\P\bigl( \langle f, \barmuxilaka \rangle \leq - x \bigr)}%
   {\Phi(-x/\sigmaxilaka[f])} \right|
  &\leq C_2 \left( \frac{1}{\la^{1/(2+4\gamma)}} + \frac{x^3}{\la^{(2+3\gamma)/(1+2\gamma)}} \right) \, ,
\end{align}
where $ \la_1 $ only depends on $ f $, $ \ka $, $ \xi $ and $ R $, whereas
$ C_1 $ and $ C_2 $ only depend on the \emph{ratio} $ \| f \|_\infty/\sigmaxikainf[f] $
along with $ \ka $, $ \xi $ and $ R $.
\item[$(2)$] Suppose that $ 0 < \sigmaxilaka[f] \le C_3 \la^{1/2} $. Then, for all $ x \ge 0 $, we have:
\begin{equation}
\label{eq:large0}
 \P \bigl( \pm \langle f, \barmuxilaka \rangle \geq x \bigr)
  \le \exp \left( - \min \left\{ C_4 \, \frac{x^2}{\sigmaxilaka^2[f]}, \> C_5 \, x^{1/(1 + \gamma)}, \>
   C_6 \left( \frac{x^3}{\la} \right)^{1/(2 + \gamma)} \right\} \right) \, ,
\end{equation}
\end{enumerate}
where $ C_3 $--$ C_6 $ only depend on $ f $, $ \ka $, $ \xi $ and $ R $.
\end{theo}

\begin{rem}
The second part of the theorem above is especially useful for degenerate cases, i.~e.,
$ \sigmaxika[f] = 0 $. As an example, one can consider the total number of edges in
the Voronoi graph: see Subsection~8.2 of \cite{PY1}.
\end{rem}

\begin{rem}
\label{probabilities-CLT}
In particular, under Assumptions~$\Gen(\gamma, \ka)$ and provided that $ \sigmaxikainf[f] > 0 $,
Theorem~\ref{probabilities} provides a central limit theorem. Comparing to
Assumptions~$\ConvergCLT(\ka)$, none of them implies
the others. Assumptions~$\Gen(\gamma, \ka)$ roughly include much stronger moments conditions,
but do not require boundedness of the support of $ \ka $. Similarly, to the best of our knowledge,
none of the existing central limit theorems has been proved under conditions weaker than
Assumptions~$\Gen(\gamma, \ka)$. Thus, Theorem~\ref{probabilities} also adds to existing CLT's.
\end{rem}

\begin{rem}
\label{NonOpt}
At least in certain cases, there appears scope for improvement. Some related results
indicate that if $ \xi $ satisfies Assumption~$\MomGrInt(0, \ka)$ and
$ R $ satisfies Assumption~$\MomGrPt(0, \ka)$ (i.~e., both are almost surely uniformly bounded),
Theorem~\ref{probabilities} should actually hold
for $ \gamma = 0 $ (full range large deviation principles, see next subsection) rather
than $ \gamma = 1 $. Results leading to full range large deviation principles are derived in
\cite{Gorch} for sums of locally dependent random variables (provided that the random variables
as well as the vertex degrees in the dependence graph are uniformly bounded), in \cite{LH2005}
for germ--grain models and in \cite{LH2009} for a more general case, where
germs are affine subspaces instead of points. Notice that in the latter case,
the corresponding geometric functional is even not stabilizing in the
sense of the present paper. However, it is uniformly stabilizing if we replace
balls by direct sums of $(d-k)$-dimensional balls and $k$-dimensional subspaces.
This is due to different behavior of the variance.

Indeed, clever modification and application of Lemma~1 of \cite{Gorch} might
relax the expression $ \max \{ \alpha, 1 \} + \beta d $ in Assumptions~$\Gen(\gamma, \ka)$
to some continuous function $ \psi(\alpha, \beta) $ with $ \psi(0, 0) = 0 $ and
$ \lim_{\alpha, \beta \to \infty} \bigl( \alpha + \beta d - \psi(\alpha, \beta) \bigr) = 0 $.
Details may appear in forthcoming work.
\end{rem}

\subsection{Moderate deviation principles}
\label{ssc:MDP}

It is natural to investigate the asymptotics of
$ (\barmuxilaka)_\la$ on intermediate scales between those appearing
in Gaussian and law of large numbers behavior.
This leads us to moderate deviation principles (MDPs).
In this paper we are able to deduce
moderate deviation principles from Theorem~\ref{probabilities} for a typically
\emph{partial} intermediate regime for stabilizing
$\xi$ (for the full scale, we have to assume $ \alpha = \beta = 0 $; see
Theorem~\ref{MDPTWI} below for a formal statement). We remark that in \cite{BESY},
moderate deviation principles were obtained for an essentially smaller
set of examples, including the prototypical random sequential packing and some 
spatial birth-and-growth models as well as for empirical functionals of nearest
neighbor graphs, but they were obtained on \emph{every} intermediate scale. 

We say that a family of probability measures $(\nu_{\la})_{\la}$ on $ \mathcal T $,
which is a measurable as well as a topological space, obeys a large deviation principle
(LDP) with speed $ a_\la $ and good rate function $I(\cdot) \Colon \mathcal T \to
[0, \infty] $ as $ \la \to \la_0 $ if
\begin{itemize}
\item $I$ is lower semi-continuous and has compact level sets
$ N_L := \{ x \in \mathcal T \Colon I(x) \le L\} $, for every $L \in  [0, \infty)$.
\item For every measurable set $ \Gamma $, we have:
\begin{equation}
\label{eq:DefLDP}
 - \inf_{x \in \accentset{\circ}{\Gamma}} I(x)
  \le \liminf_{\la \to \la_0} \frac{1}{a_\la} \log \nu_{\la}(\Gamma)
  \le \limsup_{\la \to \la_0} \frac{1}{a_\la} \log \nu_{\la}(\Gamma)
  \le - \inf_{x \in \overline{\Gamma}} I(x) \, ,
\end{equation}
where $ \accentset{\circ}{\Gamma} $ denotes the topological interior of $ \Gamma $
and $ \overline\Gamma $ denotes its closure.
\end{itemize}
Notice that we do not assume that the measures are Borel. In other words, open sets
are not necessarily measurable.

Similarly we will say that a family of $ \mathcal T $-valued random variables
$(Y_{\la})_{\la}$ obeys a large deviation principle with speed
$a_\la$ and good rate function $I(\cdot) \Colon \mathcal T \to
[0, \infty] $ if the sequence of their distributions does.
Formally a moderate deviation principle is nothing but an LDP. However, we
will speak about a moderate deviation principle
for a
sequence of random variables whenever the scaling of the
corresponding random variables is between that of an ordinary Law
of Large Numbers and that of a Central Limit Theorem.
\medskip

Take $ \gamma \ge 0 $ (arising from Assumptions~$\Gen(\gamma, \ka)$) and
consider $ \la \in (0, \infty) $, $ \la \to \infty $. Let
$(a_{\la})_{\la>0}$ be such that
\begin{equation}
\label{eq:assal}
 \lim_{\la \to \infty} a_{\la} = \infty \quad \text{and} \quad
  \lim_{\la \to \infty} \frac{a_{\la}}{\la^{1/(2+4\gamma)}}=0 \, .
\end{equation}
Under these assumptions, we first state the following MDP for
$ \barmuxilaka $:

\begin{theo}\label{MDPTWI} 
Suppose that $ \xi $ satisfies Assumptions~$\Gen(\gamma, \ka)$ and $\ConvergVar(\ka)$,
and take $ f \in \B(\R^d) $.
Then, for each $(a_{\la})_{\la >0}$ satifying
\eqref{eq:assal},
the family of random variables $(a_{\la}^{-1} \la^{-1 \slash 2}
\langle f, \barmuxilaka \rangle )_{\la}$
satisfies on $\R$ the moderate deviation principle
with speed $a_{\la}^2$
and good rate function
\begin{equation}
\label{eq:MDPRF}
 \Ixika{f}(t) := \frac{t^2}{2 \sigmaxika^2[f]} \, ,
\end{equation}
where $ \sigmaxika $ is as in \eqref{eq:Varlim} and where possible division by zero is
handled according to our convention at the end of Subsection~\ref{ssc:terminology}.
\end{theo}

The next result is a MDP on the level of \emph{measures}.
Denote by $ \Meas(\R^d)$ the real vector space of finite signed measures on $\R^d$.
Equip $ \Meas(\R^d)$ with the \emph{$ \tau $-topology} generated by the sets:
$$
 U_{f,x,\delta} := \bigl\{ \nu \in \Meas(\R^d) \sth | \langle f, \nu \rangle -x | < \delta \bigr\} \, ,
$$
where $ f \in \B(\R^d) $, $ x \in \R $ and $ \delta > 0 $.
It is well known that since the collection of linear functionals
$\{\nu \mapsto \langle f, \nu \rangle \sth f \in \B(\R^d) \}$
is separating in $ \Meas(\R^d)$, this topology makes $ \Meas(\R^d)$
into a locally convex, Hausdorff topological vector space, whose topological dual
is the preceding collection, hereafter identified with $ \B(\R^d)$. With
this notation we establish the following \emph{measure-level} MDP for
$ \barmuxilaka $:

\begin{theo} \label{MDPmeasure} 
Suppose that $ \xi $ satisfies Assumptions~$\Gen(\gamma, \ka)$ and $\ConvergVar(\ka)$.
Then for any family $(a_{\la})_{\la >0}$ satisfying \eqref{eq:assal},
the family $ (a_{\la}^{-1} \, \la^{-1/2} \, \barmuxilaka)_{\la} $
satisfies the MDP on $ \Meas(\R^d)$, endowed with the $ \tau $-topology, with
speed $a_{\la}^2$ and the convex, good rate function given by
\begin{equation}
\label{eq:rateemp}
 \Ixikameas(\nu) := \frac{1}{2} \sigmaxika^2 \biggl[
  \frac{\dl \nu}{\Vxi(\ka(x)) \, \ka(x) \, \dl x}
 \biggr]
\end{equation}
if $\nu \in \Meas(\R^d)$ is absolutely continuous with
respect to $ \Vxi(\ka(x)) \, \ka(x) \, \dl x $, and by
$ \Ixikameas(\nu) := +\infty$ otherwise. Again,
$ \sigmaxika $ is as in \eqref{eq:Varlim}.
\end{theo}

\begin{rem}
Theorem~\ref{MDPmeasure} provides a MDP with respect to the
$ \tau $-topology, which is based on \emph{measurable} bounded
test functions. Therefore, this result has a stronger nature
than the corresponding Theorem~2.2 of \cite{BESY}, which
is stated in the weak topology, based on \emph{continuous} bounded
test functions.
\end{rem}

\section{Applications}\label{sc:APPLIC}

We here provide three groups of applications of our deviation bounds and moderate
deviation principles: models related to random sequential packing,
functionals related to $ k $ nearest neighbors and sphere of influence graphs. These
applications have been considered in detail in the context of central limit theorems
\cite{PY1, BY2, PY6} and in the context of laws of large numbers in \cite{PY2, PY4}.
In the context of moderate deviation principles, packing and nearest neighbors were
considered in \cite{BESY}. For all groups of applications,
we establish results of a relatively universal nature: to the best of
our knowledge, they are more general than those stated in the literature. We show
where our large deviation results improve and generalize over \cite{BESY}.

To set up the framework under which our results are stated, we here introduce
a new concept, which we shall call \emph{confinement}. Similarly as in stabilization,
the idea is that the value of a functional at $ \marked{x} $ depends only on some
`neighborhood' of $ \marked{x} $. The concept of stabilization is based on
metric neighborhoods, while the concept of confinement is entirely based on sets.
In precise terms, it goes as follows.

\begin{defn}
\label{confin}
Let $ h $ be a set-valued geometric functional, such that $ h(\marked{x}, \marked{\X})
\subseteq \marked{\X} $ for all $ \marked{\X} $ and all $ \marked{x} \in \marked{\X} $.
A geometric functional $ \xi $ is \emph{confined to $ h $} if $ \xi(\marked{x}, \marked{\X})
= \xi(\marked{x}, h(\marked{x}, \marked{\X})) $ for all $ \marked{\X} $ and
all $ \marked{x} \in \marked{\X} $.
\end{defn}

\begin{rem}
\label{StabConfin}
Let $ \xi $ be confined to $ h $. Then any radius of stabilization for $ h $
is also a radius of stabilization for $ \xi $. Similarly, if
$ \marked{\X} $ is basically $ \rho $-externally stable at $ \marked{x} $ with
respect to $ h $, it is also basically $ \rho $-externally stable at $ \marked{x} $
with respect to $ \xi $.
\end{rem}

\begin{rem}
\label{ConfinSubs}
Let $ h_1 $ and $ h_2 $ are set-valued geometric functionals, such that
$ h_1(\marked{x}, \marked{\X}) \subseteq h_2(\marked{x}, \marked{\X})
\subseteq \X $ for all relevant $ \marked{x} $ and $ \marked{\X} $.
Suppose that $ h_1 $ is stable in the sense that $ h_1(\marked{x}, \marked{\Y})
= h_1(\marked{x}, \marked{\X}) $ if $ h_1(\marked{x}, \marked{\X}) \subseteq
\marked{\Y} \subseteq \marked{\X} $. Then any geometric functional
confined to $ h_1 $ is also confined to $ h_2 $.
\end{rem}

Now we are ready to focus on each group of applications separately.

\subsection{Random sequential packing and related models}
\label{ssc:RSA}

The following prototypical random sequential packing/adsorption (RSA)
model arises in diverse disciplines, including physical, chemical,
and biological processes. See \cite{PY2} for a discussion of the
many applications, the many references, and also a discussion of
previous mathematical analysis. In one dimension, this model is
often referred to as the R\'enyi car parking model \cite{Re}.

Consider a finite set $\X \subset \R^d$
and to each $x \in \X$ attach a ball with some fixed diameter $ \rho $
centered at $x$. Moreover, to all points in $\X$ attach i.i.d.\ uniform
time marks taking values in some finite time interval, say, $ [0,1] $.
This establishes a
\emph{chronological} order on the points of $\X$. As usual, denote by
$ \marked{\X} $ the configuration of points of $ \X $ along with their time marks.
Declare the first point in the chronological ordering \emph{accepted}
and proceed recursively, each time accepting the next point if the ball
it carries does not overlap the previously accepted (packed) balls and
rejecting it otherwise. The functional $\xi(\marked{x},\marked{\X})$ is defined to be $1$ if
the ball centered at $x$ has been accepted and $0$ otherwise.
This defines the prototypical random sequential packing/adsorption
(RSA) process.

One can also consider infinite periods of packing, i.~e., input point
processes on $ \marked{\R}^d = \R^d \times [0, \infty) $. Take a Poisson point
process with density $ \ka(x) \,\dl x \otimes \dl t $,
where $ \ka $ is a probability density function with bounded support. Then,
clearly, only finitely many points can be accepted, so that all points that appear
after a certain time are rejected. Moreover, almost surely, there is actually no
more available space for packing. This is called \emph{jamming}: see
\cite{Pe1}. This setting allows to define the random measures $ \muxilaka $
in just the same way as in \eqref{eq:muxilaka}, although the measure on the
mark space is infinite. However, the latter fact prevents us from applying
our results directly. Although one might use truncation of time, jamming will
not be considered in this paper.

The RSA model can be extended in numerous other ways: see~\cite{Pe2007CLT,PY2}.
In particular, the decision whether to accept or reject a particle can
depend on additional characteristics attached to the particle (e.~g., mass),
it can depend on time (in particular, after a certain time, a particle may
be desorbed) and it can even be random. As an example, we consider the
\emph{spatial birth--growth model}: the balls attached to subsequent independently
time-marked points, i.~e., particles, are allowed to have their initial radii bounded
random i.i.d.\ rather than fixed. Moreover, at the moment of its birth each
particle begins to \emph{grow} radially with constant speed $v$ until it hits
another particle or reaches a certain maximal admissible size $ \rho $ -- in both
these cases it stops growing. In analogy to the basic RSA, a particle is accepted
if it does not overlap any previously accepted one and is rejected otherwise.

The mark of a point now consists of the time stamp plus the initial radius
of the corresponding ball. The functional of interest is
again given by $\xi(\marked{x},\marked{\X}) = 1$ if the particle centered at $x$
has been accepted and $0$ otherwise.
This model, going also under the
name of the \emph{Johnson--Mehl growth process} in the particular case where
the initial radii are $ 0 $, has attracted a lot of interest in the
literature, see \cite{PY2,BY2} and the references therein.

Let $ \marked{\X} $ be a configuration of marked particles, where a random mark
consists of a pair $ (t, s) $, where $ t $ is the time stamp and where $ s $
is some additional feature of the point. As suggested in \cite{PY2}, consider an
oriented graph with vertex set $ \marked{\X} $,
where an edge from $ \marked{x} $ to $ \marked{y} $ exists if the particle
$ x $ has arrived before $ y $ and if $ \| x - y \| \le \rho $.
Given $ \marked{x} \in \marked{\X} $, denote by
$ \markedAin_\rho(\marked{x}, \marked{\X}) $ the set
of all particles in $ \marked{\X} $ from which $ \marked{x} $ can be reached
by a directed path in this graph, along with $ \marked{x} $ itself.

Some thought shows that the functional $ \xi $ considered in the basic
RSA model as well as in the spatial birth--growth model is confined to
the functional $ \markedAin_\rho $ according to Definition~\ref{confin}.
Moreover, this is true for all examples
considered in \cite{PY2}: the key point is that particles are only influenced
by the configuration at their arrival, but not by the particles arriving later.

Now let $ \xi $ be any geometric functional confined to $ \markedAin_\rho $.
Denoting $ D(\marked{x}, \marked{\X}) := \sup \{ \| y - x \| \} \sth y \in \X \} $,
observe that the functional:
$$
 R(\marked{x}, \marked{\X}) := D(\marked{x}, \markedAin_\rho(\marked{x}, \marked{\X})) + \rho
$$
is a radius of stabilization for $ \markedAin_\rho $ and, according to Remark~\ref{StabConfin},
also for $ \xi $. Moreover, letting $ R_\la(\marked{x}, \marked{\X}) :=
R \bigl( \la^{1/d} \marked{x}, \la^{1/d} \marked{\X} \bigr) $,
observe that $ \la^{-1/d} R_\la $ is a radius of stabilization for
$ \xi_\la $.

Now let $ \marked{\Po}_f $ be a marked Poisson process with bounded intensity $ f $;
for the random marks (i.~e., the probability measure $ \P_\Marks $), assume that the
time stamp is continuously distributed
(without loss of generality, we may then assume that it is uniform over $ [0, 1] $).
Percolation estimates (Section~4 of \cite{PY2}) then yield the bound:
\begin{equation}
\label{eq:RSA:ExpDec}
 \P(D(\marked{x}, \markedAin_\rho(\marked{x}, \marked{\Po}_f)) \ge t) \le a \, e^{-bt}
\end{equation}
for all $ \marked{x} \in \marked{\R}^d $ and all $ t \ge 0 $, where the constants
$ a \ge 0 $ and $ b > 0 $ depend only on $ \| f \| $ and $ \rho $. As a result, $ \xi $
is $ \ka $-almost exponentially stabilizing for any bounded density $ \ka $.

\noindent
This puts us into the position to formulate the following result:

\begin{theo}
\label{RSA}
Let $ \rho > 0 $ and let $ \xi $ be a geometric functional on marked
points, where the marks along with $ \P_\Marks $ are as described above.
Suppose that $ \xi $ is confined to $ \markedAin_\rho $ and that
$ \ka $ satisfies Assumption~$\Dens$. Take $ \la_0 > 0 $.
\begin{enumerate}
\item[$(1)$] If the family $ (\xi_\la)_{\la > \la_0} $ satisfies
Assumption~$\MomlaOne(p, \ka)$ for some $ p > 2 $, then
$ \xi $ satisfies
\newline
Assumptions~$\ConvergVar(\ka)$. Consequently,
\eqref{eq:Varlim} holds.
\item[$(2)$] Let $ 0 < \tau < \infty $. Suppose that the family
$ (\xi_\la)_{\la > \la_0} $ satisfies Assumption~$\MomHomOne(p, \tau, \Omega)$
for some $ p > 2 $ and some convex domain $ \Omega $ with $ 0 < \vol(\Omega) < \infty $.
Next, suppose that for notably many triples $ (t, s, \marked{\X}) $,
we have $ \xi \bigl( (\0, t, s), \marked{\X} \bigr) \ne 0 $
and $ t > \max \{ t' \sth (x', t', s') \in \marked{\X} \} $.
Then, with $ \Vxi $ as in \eqref{eq:Vxi}, we have $ \Vxi(\tau) > 0 $.
\item[$(3)$] Let $ \alpha \ge 0 $. If $ \xi $ satisfies Assumption~$\MomGrInt(\alpha, \ka)$,
then it satisfies Assumption~$\Gen(1 + \alpha + d, \ka)$; if $ \xi $ satisfies
Assumption~$\MomGrPt(\alpha, \ka)$, it satisfies Assumption~$\Gen(\max \{ \alpha, 1 \} + d, \ka)$.
Consequently, the conclusions of Theorems~\ref{probabilities}, \ref{MDPTWI} and \ref{MDPmeasure}
hold with suitable $ \gamma $.
\end{enumerate}
\end{theo}

\begin{proof}
\textit{(1)}: From the exponential bound~\eqref{eq:RSA:ExpDec}, it follows that
the family $ (R_\la)_{\la > \la_0} $ satisfies Assumption~$\Momla(q, \ka)$ for all
$ q \ge 0 $. Similarly, $ R $ satisfies Assumption~$\FinHom(\tau)$ for all $ \tau > 0 $.
As a result, $ \xi $ satisfies
Assumptions~$\ConvergVar(\ka)$.

\medskip
\textit{(2)}: Again from the bound~\eqref{eq:RSA:ExpDec}, it follows that the family
$ (R_\la)_{\la > \la_0} $ satisfies Assumption~$\MomHom(q, \tau, \Omega)$ for all
$ q \ge 0 $. Thus, by Theorem~\ref{NDeg}, it suffices to show that $ \Delta \bigl( (\0, t, s),
\marked{\X} \bigr) \ne 0 $ and that $ \marked{\X} $ is externally stable at $ (\0, t, s) $
with respect to $ \xi $ provided that $ \marked{\X} $ is finite,
$ \xi \bigl( (\0, t, s), \marked{\X} \bigr) \ne 0 $,
$ t > \max \{ t' \sth (x', t', s') \in \marked{\X} \} $ and, in addition,
without loss of generality, $ (\0, t, s) \notin \marked{\X} $.
Since the time stamp $ t $ is the largest of all, we have
$ \markedAin_\rho(\marked{x}, \marked{\X}) = \markedAin_\rho(\marked{x},
\marked{\X} \cup \{ (\0, t, s) \}) $. As $ \xi $ is confined to
$ \markedAin_\rho $, we also have
$ \xi(\marked{x}, \marked{\X}) = \xi(\marked{x}, \marked{\X} \cup
\{ (\0, t, s) \}) $ for all $ \marked{x} \in \marked{\X} $, so that
$ \Deltaxi \bigl( (\0, t, s), \marked{\X} \bigr)
= \xi \bigl( (\0, t, s), \marked{\X} \bigr) \ne 0 $.
Moreover, letting $ r := \max_{\marked{x} \in \marked{\X}} \| x \| + \rho $,
observe that $ \marked{\X} $
is $ r $-externally stable at $ (\0, t, s) $ with respect to $ \xi $. This proves the
desired assertion.

\medskip
\textit{(3)}: It suffices to observe that $ \ka $-uniform exponential decay
of the family $ (R_\la)_{\la > \la_0} $ implies Assumption~$\MomGrInt(d, \ka)$.
\end{proof}

Now we return to our two examples, the RSA and the spatial birth--growth model.
As $ \xi $ is then bounded, the family $ (\xi_\la)_{\la > \la_0} $ satisfies
$ (\xi_\la)_{\la > \la_0} $ satisfies Assumptions~$\MomlaOne(p, \ka)$ and
$\MomHomOne(p, \tau, \Omega)$ for all $ p \ge 0 $, all
$ 0 < \tau < \infty $ and all suitable domains $ \Omega $.
Since the first particle is always accepted, we have
$ \xi \bigl( (\0, s, t), \emptyset \bigr) = 1 \ne 0 $. As a result,
the limiting variance is non-degenerate, i.~e., $ \Vxi(\tau) > 0 $ for all $ \tau > 0 $.
Finally, the family $ (\xi_\la)_{\la \ge \la_0} $ satisfies Assumption~$\MomGrPt(0, \ka)$
and therefore Assumption~$\Gen(d + 1)$. Thus, the conclusions of
Theorems~\ref{probabilities}, \ref{MDPTWI} and \ref{MDPmeasure} hold with
$ \gamma = d + 1 $.

Compared to the results in \cite{BESY}, there are three general novelties: first, we provide
more explicit bounds in Theorem~\ref{probabilities}. Second, we consider a much
more general class of geometric functionals. Third, we consider a broader
class of intensities $ \ka $: in particular, they need not have bounded support
(in contrast to Theorem~\ref{NN} where bounded support is required because the
density has to be bounded away from zero). Thus, in the basic RSA model, our present
results add to existing central limit theorems \cite{BY1, BY2, DR, PY2}, weak laws of
large numbers \cite{CFJP, PY2, PY4} and large deviations \cite{BESY, SY} for random
packing functionals.

Regarding the spatial birth--growth model, note that the paper \cite{BESY} only
succeeds to treat this model under an unnatural positive lower bound for initial
particle sizes, which excludes for instance the crucial Johnson-Mehl set-up. Here
this condition is no longer required. Our present results add to existing central
limit theorems \cite{BY2,CQ,Pe2007CLT,PY2} as well as to the large deviation principle \cite{SY}.

\subsection{Nearest Neighbors}
\label{ssc:NG}

Let $ \marked{\X} $ be a locally finite point configuration in $ \marked{\R}^d $. Take
$ \marked{x} \in \marked{\X} $ and $ k \in \N $. We define the set of $ k $ nearest
neighbors of $ \marked{x} $ in $ \marked{\X} $ to be
the set of all $ \marked{y} \in \marked{\X} \setminus \{ \marked{x} \} $, such that
$ \| z - x \| < \| y - x \| $ for strictly less than $ k $ points $ \marked{z} \in
\marked{\X} \setminus \{ \marked{x} \} $. Thus, if
$ \marked{\X} $ consists of a point $ \marked{x} $, a point $ \marked{y} $ with
$ \| y - x \| = 1 $, two more points $ \marked{z}, \marked{w} $ with $ \| z - x \| =
\| w - x \| = 2 $ and possibly some more points with the distance to $ x $ strictly
larger than $ 2 $, the set of two nearest neighbors of $ \marked{x} $ in $ \marked{\X} $
actually consists of three points: $ \marked{y} $, $ \marked{z} $ and $ \marked{w} $.

Let $ k \in \N $. Define the two set-valued geometric functional $ \NND{k} $ and $ \NN{k} $
as follows: let $ \NND{k}(\marked{x}, \marked{\X}) $ be the set consisting of $ \marked{x} $
and the set of $ k $ nearest points of $ \marked{x} $ in $ \marked{\X} $; let
$ \NN{k}(\marked{x}, \marked{\X}) $ be the union of $ \NND{k}(\marked{x}, \marked{\X}) $
plus the set of all $ \marked{y} \in \marked{\X} $, such that $ \marked{x} $ is among the
$ k $ nearest neighbors of $ \marked{y} $ in $ \marked{\X} $. We shall consider geometric
functionals confined to $ \NND{k} $ or $ \NN{k} $ according to Definition~\ref{confin}.
Notice that by Remark~\ref{ConfinSubs}, any geometric functional confined to
$ \NND{k} $ is also confined to $ \NN{k} $.

Fix a domain $ \Omega \subseteq \R^d $. The construction
of a radius of stabilization for $ \NN{k} $ inside $ \Omega $ is well-known.
Following \cite{Pe2007CLT}, consider a collection $ C_1, \ldots , C_s $
of infinite open cones with angular radius $ \pi/12 $ and apex at $ \0 $, with
union $ \R^d \setminus \{ \0 \} $. Let $ C_i^+ $ be the open cone concentric to $ C_i $ and
with angular radius $ \pi/6 $. For a configuration $ \marked{\X} \subset \marked{\Omega}
= \Omega \times \Marks $ and $ \marked{x} \in \marked{\X} $, define
$ R^{k,\Omega,i}(\marked{x}, \marked{\X}) $ to be the distance from $ x $ to its
$ k $-th nearest point in $ \marked{\X} \cap (C_i^+ + x) $ if such a point exists and
this distance is less than $ \diam((C_i + x) \cap \Omega) $; otherwise,
set $ R^{k,\Omega,i}(\marked{x}, \marked{\X}) := \diam((C_i + x) \cap \Omega) $.

Let $ R^{k,\Omega}(\marked{x}, \marked{\X}) := \max_i R^{k,\Omega,i}(\marked{x}, \marked{\X}) $.
From elementary geometry (see \cite{Pe2007CLT}), it follows that if
$ \marked{\X} \subset \marked{\Omega} $, $ \marked{x} \in \marked{\X} $ and
$ \| y - x \| > R^{k,\Omega}(\marked{x}, \marked{\X}) $,
then neither $ \marked{y} \in \NND{k}(\marked{x}, \marked{\X}) $ nor
$ \marked{x} \in \NND{k}(\marked{y}, \marked{\X}) $.
As a result, $ R^{k,\Omega} $ is a radius of stabilization inside $ \Omega $
for $ \NN{k} $ as well as for any geometric functional confined to $ \NN{k} $.

If $ 0 < \tau < \infty $, then the homogeneous Poisson process $ \Po_\tau $ almost
surely contains infinitely many points in every cone $ C_i $. Therefore,
for $ \Omega = \R^d $, $ R^{k, \Omega} $ satisfies $ \FinHom(\tau) $.

Now assume that $ \Omega $ is bounded and convex with $ \vol(\Omega) > 0 $.
Take $ \la > 0 $ and let $ \ka $ be a probability density function
vanishing outside $ \Omega $, but with $ \inf_{x \in \Omega} \ka(x) > 0 $.
We will show that $ R^{k,\Omega} $ enjoys super-exponential tail decay.
Basically, we follow \cite{Pe2007CLT}, but it turns out that one has to
be a bit more careful. Take $ \marked{x} \in \marked{\Omega} $ and $ i = 1, \ldots, s $.
It is easy to see that if $ R^{k,\Omega,i}(\marked{x}, \marked{\Po}_{\la \ka}) > \rho $, then
there are less than $ k $ points in $ \marked{\Po}_{\la \ka} \cap B_\rho(x) \cap (C_i^+ + x) $,
but also at least one point $ y \in \Omega \cap (C_i + x) $
with $ \| y - x \| > \rho $. By convexity, there also exists a point
$ z \in \Omega \cap (C_i + x) $ with $ \| z - x \| = \rho/2 $.
Setting $ \eta := \sin \frac{\pi}{12} $, we have
$ B_{\eta \rho/2}(z) \subseteq B_\rho(x) \cap (C_i^+ + x) $.

The continuation of the argument in \cite{Pe2007CLT} works provided that 
$ B_{\eta \rho/2}(z) \subseteq \Omega $, but this
is not necessarily true. However, letting $ D := \diam(\Omega) $, we have
$ \Omega' := \bigl\{ \bigl( 1 - \frac{\eta \rho}{2D} \bigr) z +
\frac{\eta \rho}{2D} \, w \sth w \in \Omega \bigr\} \subseteq \Omega $ as well
as $ \Omega' \subseteq B_{\eta \rho/2}(z) \subseteq
B_\rho(x) \cap (C_i^+ + x) $. Therefore, the set $ \Omega' \times \Marks $
contains less than $ k $ points in $ \marked{\Po}_{\la \ka} $.
Since $ \vol(\Omega') = \bigl( \frac{\eta \rho}{2D} \bigr)^d \vol(\Omega) $ and
$ \Omega' \subseteq \Omega $, the probability that $ \Omega' \times \Marks $ contains
less than $ k $ points in $ \marked{\Po}_{\la \ka} $ is bounded from above by:
$$
 \sum_{l=0}^{k-1} \frac{1}{l!} \left[ \la m \left( \frac{\eta \rho}{2D} \right)^d \vol(\Omega) \right]^l
  \exp \left[ - \la m \left( \frac{\eta \rho}{2D} \right)^d \vol(\Omega) \right] \, ,
$$
where $ m = \inf_{x \in \Omega} \ka(x) $.
This is also an upper bound on $ \P(R^{k,\Omega,i}(\marked{x}, \marked{\Po}_{\la \ka}) > \rho) $.
Consequently, there exist $ a \ge 0 $ and $ b > 0 $ depending only on $ \Omega $, $ \ka $ and
$ k $, such that $ \P(R^{k,\Omega}(\marked{x}, \marked{\Po}_{\la \ka}) \ge \rho)
\le a \, e^{-b \la \rho^d} $ for all $ \rho \ge 0 $, all $ \la > 0 $ and all
$ \marked{x} \in \marked{\Omega} $.

Recalling that $ \xi_\la(\marked{x}, \marked{\X}) = \xi \bigl( \la^{1/d} \marked{x},
\la^{1/d} \marked{\X} \bigr) $, observe that if $ \xi $ is confined to $ \NN{k} $, then
$ \xi_\la $ is also confined to $ \NN{k} $. Therefore,
$ R^{k,\Omega} $ is a radius of stabilization for $ \xi_\la $ inside $ \Omega $,
so that we can set $ R_\la(\marked{x}, \marked{\X}) := \la^{1/d} R^{k,\Omega}(\marked{x}, \marked{\X}) $.
Then we have $ \P(R_\la(\marked{x}, \marked{\Po}_{\la \ka}) > \rho) \le a \, e^{-b \rho^d} $,
with $ a $ and $ b $ uniform in $ \marked{x} $, $ \la $ and $ \rho $. Consequently,
\begin{equation}
\label{eq:NG:SuperExpDec}
\begin{split}
 \E \bigl( R_\la(\marked{x}, \marked{\Po}_{\la \ka}) \bigr)^j
  &= \int_0^\infty j \, \rho^{j-1}
  \P \bigl( R_\la(\marked{x}, \marked{\Po}_{\la \ka}) > \rho \bigr) \,\dl \rho
  \le a j \int_0^\infty \rho^{j-1} \, e^{- b \rho^d} \,\dl \rho =
\\
 &= \frac{a j}{d} \int_0^\infty t^{j/d-1} \, e^{- bt} \,\dl t
  = \frac{aj}{b^{j/d} d} \, \Gamma \left( \frac{j}{d} \right)
  \le B^k (j!)^{k/d}
\end{split}
\end{equation}
for some $ B $ depending only on $ a $ and $ b $. Thus,
the family $ (R_\la)_{\la > 0} $ satisfies Assumption~$\MomGrInt(1/d, \ka)$
(and, since $ R_\la(\marked{x}, \marked{\Y}) \le R_\la(\marked{x}, \marked{\X}) $
for $ \marked{\Y} \supseteq \marked{\X} $, even Assumption~$\MomGrPt(1/d, \ka)$).
This puts us into the position to formulate the following result:

\begin{theo}
\label{NN}
Let $ k \in \N $ and let $ \xi $ be a geometric functional confined to
$ \NN{k} $. Take a convex bounded domain $ \Omega $ and
$ \ka $ satisfying Assumption~$\Dens$ and with
$ \inf_{x \in \Omega} \ka(x) > 0 $. Let $ \la_0 > 0 $ and
let the cones $ C_i $ be as above.
\begin{enumerate}
\item[$(1)$] If the family $ (\xi_\la)_{\la > \la_0} $ satisfies
Assumption~$\MomlaOne(p, \ka)$ for some $ p > 2 $, then
$ \xi $ satisfies
\newline
Assumptions~$\ConvergVar(\ka)$. Consequently,
\eqref{eq:Varlim} holds.
\item[$(2)$] Let $ 0 < \tau < \infty $. Suppose that the family
$ (\xi_\la)_{\la > \la_0} $ satisfies Assumption~$\MomHomOne(p, \tau, \Omega)$
for some $ p > 2 $. Next, suppose that notably many pairs $ (t, \marked{\X}) $
satisfy the following two conditions: first,
$ \Deltaxi \bigl( (\0, t), \marked{\X} \bigr) \ne 0 $ (recalling the definition
of the add-one cost from Subsection~\ref{ssc:NDeg}); second,
there exists $ \rho > 0 $, such that for each cone $ C_i $, each of the
sets $ \marked{\X} \cap C_i \cap B_\rho(\0) $ and
$ (\marked{\X} \cap C_i) \setminus B_{\rho/\eta}(\0) $
contains at least $ k $ points; here, $ \eta = \sin \frac{\pi}{12} $.
Then, with $ \Vxi $ as in \eqref{eq:Vxi}, we have $ \Vxi(\tau) > 0 $.
\item[$(3)$] Let $ 0 < \tau < \infty $. Suppose that $ \xi $ is confined
to $ \NND{k} $ and that the family
$ (\xi_\la)_{\la > \la_0} $ satisfies Assumption~$\MomHomOne(p, \tau, \Omega)$
for some $ p > 2 $. Next, suppose that notably many pairs $ (t, \marked{\X}) $
satisfy the following two conditions: first,
$ \xi \bigl( (\0, t), \marked{\X} \bigr) \ne 0 $; second,
there exists $ \rho > 0 $, such that $ \marked{\X} \cap B_\rho(\0) = \emptyset $
and such that each intersection $ \marked{\X} \cap B_{\rho \sqrt{3}}(\0) \cap C_i $
contains at least $ k+1 $ points. Then we have $ \Vxi(\tau) > 0 $.
\item[$(4)$] Let $ \alpha \ge 0 $. If $ \xi $ satisfies Assumption~$\MomGrInt(\alpha)$,
then it satisfies Assumption~$\Gen(2 + \alpha)$; if $ \xi $ satisfies
Assumption~$\MomGrPt(\alpha)$, it satisfies Assumption~$\Gen(\max \{ \alpha, 1 \} + 1)$.
Consequently, the conclusions of Theorems~\ref{probabilities}, \ref{MDPTWI} and \ref{MDPmeasure}
hold with suitable $ \gamma $.
\end{enumerate}
\end{theo}

\begin{proof}
\textit{(1)}: From \eqref{eq:NG:SuperExpDec}, it follows that the family
$ (R_\la)_{\la > \la_0} $ satisfies Assumption~$\Momla(q, \ka)$ for all $ q \ge 0 $.
As a result, $ \xi $ satisfies Assumptions~$\ConvergVar(\ka)$.

\medskip
\textit{(2)}: Similarly as before, we find that the family
$ (R_\la)_{\la > \la_0} $ satisfies Assumption~$\MomHom(q, \tau, \Omega)$ for all $ q \ge 0 $.
By Theorem~\ref{NDeg}, it remains to show that for each $ t \in \Marks $,
any finite configuration $ \marked{\X} \subset \marked{\R}^d \setminus \{ (\0, t) \} $
satisfying the specified conditions is externally stable at $ (\0, t) $ with respect to $ \xi $.
First, take a finite configuration $ \marked{\Y} $ with $ \marked{\Y} \cap B_\rho(\0)
= \marked{\X} \cap B_\rho(\0) $ and observe first
that no point in $ \marked{\Y} \setminus B_\rho(\0) $ is
among the $ k $ nearest neighbors of $ (\0, t) $ in $ \marked{\Y} \cup \{ (\0, t) \} $;
similarly, $ (\0, t) $ is not among the $ k $ nearest neighbors of any point
in $ \marked{\Y} \setminus B_\rho(\0) $. Therefore, $ \marked{\X} $ is
basically $ \rho $-externally stable at $ (\0, t) $ with respect to $ \xi $.

Next, let $ x \in B_\rho(\0) $, $ y \in C_i \setminus B_{\rho/\eta}(\0) $ and take
$ u $ on the axis of $ C_i $, inside $ C_i $. Then the angle between
$ y $ and $ u $ is smaller than $ \pi/12 $. By elementary geometry,
the angle between $ y $ and $ y - x $ is also smaller than $ \pi/12 $,
for $ \| x \|/\| y \| < \eta $.
Consequently, the angle between $ y - x $ and $ u $ is smaller than
$ \pi/6 $. As a result, we have $ C_i \setminus B_{\rho/\eta}(\0) \subseteq
(C_i^+ + x) $ for all $ x \in C_i \cap B_\rho(\0) $. Thus, taking
$ \marked{x} \in (\marked{\X} \cap B_\rho(\0)) \cup \{ (\0, t) \} $,
each of the sets $ (C_i^+ + x) \cap \marked{\X} $ contains
at least $ k $ points, so that $ \xi $ stabilizes with respect to $ \marked{\X} $
and $ \marked{\X} \cup \{ (\0, t) \} $ at all
$ \marked{x} \in (\marked{\X} \cap B_\rho(\0)) \cup \{ (\0, t) \} $.
By Proposition~\ref{StabExtStab}, $ \marked{\X} $ is externally stable
at $ (\0, t) $ with respect to $ \xi $. This proves the desired assertion.

\medskip
\textit{(3)}: Set $ A := B_{\rho \sqrt{3}}(\0) \setminus B_\rho(\0) $.
Similarly as in the preceding point, it suffices to show that
for each pair $ (t, \marked{\X}) $ satisfying the specified conditions,
we have $ \Delta \bigl( (0, t), \marked{\X} \bigr) \ne 0 $
and $ \marked{\X} $ is externally stable at $ (\0, t) $ with respect to $ \xi $.
First, by elementary geometry, we have
$ \diam( C_i \cap A) = \rho $. Therefore, $ (\0, t) \notin
\NND{k}(\marked{y}, \marked{\X} \cup \{ (\0, t) \}) $
for all $ \marked{y} \in (C_i \cap A) \times \Marks $ and therefore for all
$ \marked{y} \in A \times \Marks $. However, this is also true if
$ \marked{y} \notin B_{\rho \sqrt{3}}(\0) \times \Marks $ and therefore
for all $ \marked{y} \in \marked{\X} $. Since
$ \xi $ is confined to $ \NND{k} $, we then have
$ \xi( \marked{y}, \marked{\X} \cup \{ (\0, t) \})
= \xi( \marked{y}, \marked{\X}) $. As a result,
$ \Deltaxi \bigl( (\0, t), \marked{\X} \bigr) =
\xi \bigl( (\0, t), \marked{\X} \bigr) \ne 0 $.

Now take a finite configuration $ \marked{\Y} $ with
$ \marked{\Y} \cap B_{\rho \sqrt{3}}(\0) = \marked{\X} \cap B_{\rho \sqrt{3}}(\0) $.
Similarly as above, we find that for all $ \marked{z} \in \marked{\Y} \setminus
B_{\rho \sqrt{3}}(\0) $, we have $ (\0, t) \notin
\NND{k}(\marked{y}, \marked{\Y} \cup \{ (\0, t) \}) $. Therefore,
$ \marked{\X} $ is basically $ \rho \sqrt{3} $-externally stable at $ (\0, t) $
with respect to $ \xi $.
Moreover, since $ C_i \cap A \cap \marked{\X} $ contains at least $ k+1 $ points
and since $ \diam( C_i \cap A) = \rho $ for all $ i $, we have
$ \NND{k}(\marked{y}, \marked{\X} \cup \{ (\0, t) \}) \subseteq
B_\rho(y) \times \Marks $ for all $ \marked{y} \in \marked{\X} \cap B_{\rho \sqrt{3}}(\0)
= \marked{\X} \cap A $. Therefore, $ \xi $ stabilizes at $ \marked{y} $ within radius
$ \rho $ with respect to $ \marked{\X} $ as well as to
$ \marked{\X} \cup \{ (\0, t) \} $.
By Proposition~\ref{StabExtStab}, $ \marked{\X} $ is externally stable
at $ (\0, t) $ with respect to $ \xi $. This proves the desired assertion.

\medskip
\textit{(4)}: This follows immediately from the fact that
the family $ (R_\la)_{\la > 0} $ satisfies Assumption~$\MomGrInt(1/d, \ka)$.
\end{proof}

Theorem~\ref{NN} adds to the existing results on non-degeneracy of the limiting
variance (see \cite{PY1}), central limit theorems (see Chapter~4 of \cite{Pe} as
well as \cite{BY2}) and, of course, large deviation results.
For the latter, observe that the paper \cite{BESY} was only able to deal with
the \emph{empirical functionals of nearest neighbors graphs}, where
$ \xi(x, \X) $ is the indicator of the event that the total edge length
exceeds a certain threshold, or of some event involving 
the degree of the graph at $x$ and possibly also the edge length,
such as `the total length of edges incident to $x$ exceeds a certain
multiplicity of the graph degree of $x$' etc. No marks have been considered
in \cite{BESY}. Our result includes a broader class of intensities $ \ka $ and a
much more general collection of geometric functionals. The following example
serves as a classical one.

\begin{exmpl}
\label{NN:Length}
Define $ \xi(x, \X) $ to be the sum of the distances from $ x $ to its
$ k $ nearest neighbors (there are no marks). Then $ \xi $ is obviously
confined to $ \NND{k} $. Recalling that $ \Omega $ is a bounded convex domain
with $ \vol(\Omega) > 0 $, that $ \inf_\Omega \ka > 0 $ and that $ \ka $ vanishes
outside $ \Omega $, a similar argument as the one used for the radius
of stabilization shows that the family $ (\xi_\la)_{\la > \la_0} $ satisfies
Assumption~$\MomGrPt(1/d, \ka)$, as well as Assumptions~$\MomlaOne(p, \ka)$
and $\MomHomOne(p, \tau, \Omega) $ for all $ p, \tau > 0 $. Therefore,
$ \xi $ satisfies Assumptions~$\ConvergVar(\ka)$ and, more importantly,
Assumptions~$\Gen(2, \ka)$. This means that the range where moderate deviation
results apply is independent of the dimension. Finally, it is obvious
that for all non-empty configurations $ \X $, we have $ \xi(\0, \X) \ne 0 $.
As a result, the limiting variance is non-degenerate,
i.~e., $ \Vxi(\tau) > 0 $ for all $ \tau > 0 $.

With a little extra effort, one can prove the same results for the `undirected'
case, that is, for the sum of all distances from $ x $ to the points in $ \NN{k}(x, \X) $
(this is twice the total edge length of the undirected $ k $ nearest neighbors graph).
However, for the non-degeneracy of the limiting variance, one has to refer to
part~$(2)$ rather than to part~$(3)$ of Theorem~\ref{NN}. This requires a bit more
involved argument, which we shall not provide here because the non-degeneracy
is proved explicitly in \cite{PW}; it can also be deduced from the earlier paper
\cite{PY1}.
\end{exmpl}

Curiously, the paper \cite{PW} provides no explicit application of its general
result on non-degeneracy of the limiting variance, i.~e., Theorem~2.2 ibidem.
For the $ k $ nearest neighbors, the limiting variance is computed explicitly.
Therefore, we give another example, where we prove non-degeneracy, but the
construction from the earlier paper \cite{PY1} does not work.

\begin{exmpl}
\label{NN:ExpLength}
Define $ \xi(x, \X) := \exp \bigl( - \sum_{y \in \NN{k}(x, \X)} \| y - x \| \bigr) $.
Turning first to large deviations, observe that $ \xi $ is bounded. Therefore,
taking $ \Omega $ and $ \ka $ as before, $ \xi $ satisfies Assumptions~$\ConvergVar(\ka)$
as well as Assumptions~$\Gen(2, \ka)$. Thus, we obtain just the same range of
moderate deviation results as in the previous example.

Now we turn to the limiting variance. Consider first a configuration $ \X $
containing some point $ y \ne \0 $ and no point in $ B_{\| y \|}(\0) \setminus
\{ y \} $. Letting $ A_\rho := B_{\rho \sqrt{3}}(\0) \setminus B_\rho(\0) $, assume
also that each intersection $ \X \cap C_i \cap A_{\| y \|} $ contains at least two points.
Then $ y $ is the nearest neighbor of $ \0 $. Moreover, similarly as
in the proof of Theorem~\ref{NN}~$(3)$, we find that $ \0 $ is not the nearest
neighbor of any point in $ \X $. Therefore, we have
$ \Deltaxi(\0, \X) = e^{- \| y \|} - (1 - e^{- \| y \|}) \xi(y, \X) $.

Now take a finite configuration $ \X $, which satisfies the condition from
the preceding paragraph. If, in addition, $ \| y \| < \log 2 $, then,
clearly, $ \Delta(\0, \X) > 0 $. Moreover, taking $ r := \| y \| \sqrt{3}/\sin \frac{\pi}{12} $,
assume that each of the sets $ (\X \cap C_i) \setminus B_r(\0) $ is non-empty.
One can easily check that notably many configurations $ \X $ satisfy
this condition. From Theorem~\ref{NN}~$(2)$, it then follows that
$ V(\tau) > 0 $ for all $ \tau > 0 $.

In \cite{PY1}, the argument used to show non-degeneracy of the limiting variance
requires, among others, that $ \Deltaxi $ is stabilizing. A relatively simple
construction of a radius of stabilization for $ \Deltaxi $,
much similar to the above-mentioned construction of radius of stabilization
for $ \xi $, is provided for the total edge length
of the nearest neighbor graph (considered also in Example~\ref{NN:Length}) in
the plane ($ d = 2 $); a much more complicated construction is used for the number
of components. Here, we demonstrate that the construction used in \cite{PY1} for
the total edge length does not work here.

In \cite{PY1}, the radius of stabilization at the origin is obtained by means of six
disjoint equilateral triangles, such that the origin is a vertex of each triangle. If
this construction works for some geometric functional, one can also take a covering
of $ \R^2 \setminus \{ \0 \} $ by a family of open angles. Clearly, one can assume
that their measures are at most $ \pi/6 $. Now take any family $ C'_1, \ldots, C'_m $
of open cones with angular radii at most $ \pi/3 $, $ k \ge 2 $, $ a > 1 + \sqrt{3}/3 $
and define $ \RDeltaxi(x, \X) $ to be $ a $ times the minimal $ \rho $, such that any
set $ \X \cap (C'_i + x) $ contains at least $ k $ points. Below we show that
for any $ \tau > 0 $, the probability that $ \RDeltaxi $ is not a radius of stabilization
for $ \Deltaxi $ is strictly positive.

Take a configuration $ \X $ containing a point $ y \in C'_1 $, no point in
$ B_{\| y \|}(\0) \setminus \{ y \} $, some point $ z \in C_1 $ with
$ \| z \|/\| y \| > a \sqrt{3} $, no point in $ B_{\| z - y \|}(z) \setminus \{ y \} $
and no point in $ B_{a \| y \| \sqrt{3}}(\0) \setminus B_{\| y \| \sqrt{3}}(\0) $.
Defining $ A_\rho $ as before, assume also that each of the sets
$ \X \cap A_{\| y \|} \cap C_i $ contains at least $ k+1 $ points. Some thought shows
that such a configuration occurs with non-zero probability in any homogeneous Poisson
point process $ \Po_\tau $.

Letting $ \Y := \X \cap B_{a \| y \| \sqrt{3}}(\0) $, observe that for any point
in $ \Y $, there exists another point in the same set within distance $ \| y \| $.
Moreover, since $ a > 1 + \sqrt{3}/3 $ and since $ \X \cap \bigl(
B_{a \| y \| \sqrt{3}}(\0) \setminus B_{\| y \| \sqrt{3}}(\0) \bigr) = \emptyset $,
there exists no point in $ \X \setminus B_{a \| y \| \sqrt{3}}(\0) $ within distance
$ \| y \| $. Therefore, the nearest neighbor in $ \X \cup \{ \0 \} $ of any point
in $ \Y $ also lies in $ \Y $. As a result, $ \NN{1}(y, \X) \supseteq \NN{1}(y, \Y) $.
Moreover, the inclusion is proper because $ y $ is the nearest neighbor
of $ z $ in $ \X $. Therefore, $ \NN{1}(y, \X) < \NN{1}(y, \Y) $.
Since $ \Deltaxi(\0, \X) = e^{- \| y \|} - (1 - e^{- \| y \|}) \xi(y, \X) $ and
analogously for $ \Y $, we have $ \Deltaxi(\0, \X) > \Deltaxi(\0, \Y) $.
Therefore, $ \Delta $ does not stabilize at $ \0 $ within radius $ a \| y \| \sqrt{3} $,
nor does it stabilize within radius $ \RDeltaxi(\0, \X) $ because
$ \RDeltaxi(\0, \X) \le a \| y \| \sqrt{3} $.

Thus, we can conclude that the construction from \cite{PY1} does not work in
this case. However, this does not mean that $ \Deltaxi $ does not stabilize.
In fact, it almost surely stabilizes at $ \0 $ with respect to $ \Po_\tau $:
examining the proof of Theorem~\ref{NN}~$(2)$ and using the basic properties
of homogeneous Poisson point processes, we find that it provides an explicit
construction of an external radius of stabilization, which is almost surely
finite. By Remark~\ref{ExtStabDelta}, this is also a radius of
stabilization for $ \Delta $.
\end{exmpl} 

\subsection {Sphere of Influence Graphs}
\label{ssc:SIG}

Given a locally finite set $ \marked{\X} \subset \marked{\R}^d$, the
\emph{sphere of influence graph} $\SIG(\marked{\X})$ is a graph with vertex set
$\marked{\X}$ constructed as follows: for each $\marked{x} \in \marked{\X}$, let
$B(\marked{x}, \marked{\X})$ be a ball around $x$ with radius equal to
$\min_{\marked{y} \in \marked{\X} \setminus \{\marked{x}\}} \{\| y - x \| \}$
(in particular, the ball is degenerate if two points with different marks
share the same location). Then $B(\marked{x}, \marked{\X})$ is
called the \emph{sphere of influence} of $\marked{x}$. Draw an edge between $\marked{x}$
and $\marked{y}$ iff the balls $B(\marked{x}, \marked{\X})$ and $B(\marked{y}, \marked{\X}) $
overlap. The collection of such edges is the \emph{sphere of influence graph} (SIG) on
$\marked{\X}$ and is denoted by $\SIG(\marked{\X})$.

In \cite{PY1}, non-degeneracy of the limiting variance and central limit
theorems are derived for a variety of functionals, i.~e., the total number of edges,
the total edge length, the number of vertices of fixed degree and, most remarkably,
for the number of components. Except for the latter functional, these results are
extended in \cite{BY2} to random measures.

Here we shall only consider functionals confined to the functional $ \NSIG $,
where $ \NSIG(\marked{x}, \marked{\X}) $ denotes the set of points which are adjacent
to $ \marked{x} $ in $ \SIG(\marked{\X}) $ (including $ \marked{x} $). Notice that the
total number of edges, the total edge length and the number of vertices of fixed degree
can all be expressed in terms of suitable functionals $ \xi $ confined to $ \NSIG $,
while for the number of components, this seems not to be possible.

First, we turn to stabilization. A construction of a radius
of stabilization is given in \cite{PY1} and is also used in \cite{BY2}. However,
the results ibidem do not entirely fit the concept of stabilization
and external stabilization used here. In particular, they do not include a domain
$ \Omega $. Therefore, we here refine the construction in a similar way as in the
case of nearest neighbors. First, we rewrite the stabilization result from p.~1030 of
\cite{PY1}.

\begin{prop}
\label{SIG:Stab}
Let $ \marked{\X} \subset \R^d $ be a finite configuration. Take $ \marked{x}
\in \marked{\X} $, $ \rho > 0 $ and an open cone $ C $ in $ \R^d $ with angular
radius $ \pi/12 $ and apex at $ x $. Assume that the intersection
$ (\marked{\X} \setminus \{ \marked{x} \}) \cap B_\rho(x) $ is non-empty and
that there also exists a marked point
$ \marked{y} \in \marked{\X} \cap (C \setminus B_{3 \rho}(x)) $. Let
$ r = \| y - x \| $. Then no point in $ \marked{\X} \cap (C \setminus B_r(x)) $
is adjacent to $ \marked{x} $ in $ \SIG(\marked{\X}) $. Moreover, for any finite
configuration $ \marked{\Y} $ with
$ \marked{\Y} \cap B_{2r}(x) = \marked{\X} \cap B_{2r}(x) $,
we have $ \NSIG(\marked{x}, \marked{\Y}) \cap C = \NSIG(\marked{x}, \marked{\X})
\cap C $.
\end{prop}

\begin{proof}
Take $ \marked{z} \in \marked{\X} \cap (C \setminus B_{r}(x)) $ and let
$ z' := x + \frac{r}{\| z - x \|} $. Since $ y, z' \in C $, we have
$ \| z' - y \| < 2 \eta r $, where $ \eta = \sin \frac{\pi}{12} $ (but not necessarily
$ \| z' - y \| < r/2 $, as estimated in display $(7.4)$ of \cite{PY1}).
Therefore, $  \| z - y \| \le \| z - z' \| + \| z' - y \| < \| z - x \| - (1 - 2 \eta) r $,
so that $ B(\marked{z}, \marked{\X}) $ does not overlap with $ B_{(1 -  2 \eta)r}(\marked{x}) $.
Since $ (1 - 2 \eta) r > 3 (1 - 2 \eta) \rho > \rho $, it does not overlap with
$ B(\marked{x}, \marked{\X}) $ either.

Finally, if $ \marked{\X} \cap B_{2r}(\marked{x}) = \marked{\Y} \cap B_{2r}(\marked{x}) $, then
any two points in $ \marked{\X} \cap B_r(\marked{x}) = \marked{\Y} \cap B_r(\marked{x}) $ are
adjacent in $ \SIG(\marked{\X}) $ if and only if they are adjacent in $ \SIG(\marked{\Y}) $.
Combined with the above, this proves the result.
\end{proof}

%
%

This allows us to construct a radius of stabilization inside a domain $ \Omega \subseteq \R^d $
in a similar way as in the case of nearest neighbors. Consider a collection $ C_1, \ldots , C_s $
of infinite open cones with angular radius $ \pi/24 $ and apex at $ \0 $, with
union $ \R^d \setminus \{ \0 \} $. Let $ C_i^+ $ be the open cone concentric to $ C_i $ and
with angular radius $ \pi/12 $. Take a configuration $ \marked{\X} \subset \marked{\Omega}
= \Omega \times \Marks $ and $ \marked{x} \in \marked{\X} $. Suppose that $ \marked{\X} $
contains at least one more point and denote by $ \rho $ the distance from $ x $ to the
nearest neighbor of $ \marked{x} $ in $ \marked{\X} $ (which equals zero if there is
another marked point at the same location). Next, suppose that the set
$ \bigl( \marked{\X} \cap (C_i^+ + x) \bigr) \setminus B_{3 \rho}(x) $ is
non-empty and denote by $ r $ the distance from $ x $ to its nearest neighbor in
$ \bigl( \marked{\X} \cap (C_i^+ + x) \bigr) \setminus B_{3 \rho}(x) $. Set 
$ R^{\Omega,i}(\marked{x}, \marked{\X}) := 2r $ if this construction works and
$ 2 r < \diam((C_i + x) \cap \Omega) $; otherwise,
set $ R^{\Omega,i}(\marked{x}, \marked{\X}) := \diam((C_i + x) \cap \Omega) $.
Let $ R^{\Omega}(\marked{x}, \marked{\X}) := \max_i R^{\Omega,i}(\marked{x}, \marked{\X}) $.
Proposition~\ref{SIG:Stab} and some thought show that $ R^\Omega $ is a radius of
stabilization inside $ \Omega $ for the functional $ \NSIG $.

If $ 0 < \tau < \infty $, then the homogeneous Poisson process $ \Po_\tau $ almost
surely contains a point in every cone $ C_i $ arbitrarily far from the origin.
Therefore, for $ \Omega = \R^d $, $ R^{\Omega} $ satisfies $ \FinHom(\tau) $.

Now assume that $ \Omega $ is bounded and convex with $ \vol(\Omega) > 0 $.
Take $ \la > 0 $ and let $ \ka $ be a probability density function
vanishing outside $ \Omega $, but with $ \inf_{x \in \Omega} \ka(x) > 0 $.
Again, we will show that $ R^{\Omega} $ enjoys super-exponential tail decay.
Take $ \marked{x} \in \marked{\Omega} $ and $ i = 1, \ldots, s $.
It is easy to see that if $ R^{\Omega,i}(\marked{x}, \marked{\Po}_{\la \ka}) > u $,
then, first, either the set $ \bigl( B_{u/9}(x) \cap (C_i^+ + x) \bigr) \times \Marks $
or the set $ \bigl( (B_{u/2}(x) \setminus B_{u/3}(x)) \cap (C_i^+ + x)) \bigr) \times \Marks $
contains no point in $ \marked{\Po}_{\la \ka} $, and, second there is at least one point in
$ y \in \Omega \cap (C_i + x) $ with $ \| y - x \| > u $.
Now let $ 0 \le \theta \le 1 $. By convexity, there also exists a point
$ z_\theta \in \Omega \cap (C_i + x) $ with
$ \| z_\theta - x \| = \theta u $. Setting $ \eps := \sin \frac{\pi}{24} $, we have
$ B_{\theta \eps u}(z_\theta) \subseteq \bigl( B_{\theta (1 + \eps) u}(x) \setminus
\mathring{B}_{\theta (1 - \eps) u}(x) \bigr) \cap (C_i^+ + x) $, where
$ \mathring{B}_r(x) $ denotes the open ball of radius $ r $ centered at $ x $.

Letting $ D := \diam(\Omega) $, we have $ \Omega_\theta :=
\bigl\{ \bigl( 1 - \frac{\theta \eps u}{D} \bigr) z_\theta + \frac{\theta \eps u}{D} \, w
\sth w \in \Omega \bigr\} \subseteq \Omega $ as well
as $ \Omega_\theta \subseteq B_{\theta \eps u}(z_\theta) \subseteq
\bigl( B_{\theta(1 + \eps) u}(x) \setminus \mathring{B}_{\theta(1 - \eps) u}(x) \bigr) \cap (C_i^+ + x) $.
In particular, routine calculation shows that $ \Omega_{1/11} \subseteq B_{u/9}(x) $
and $ \Omega_{2/5} \subseteq B_{u/2}(x) \setminus B_{u/3}(x) $.
Therefore, either $ \Omega_{1/11} \times \Marks $ or $ \Omega_{2/5} \times \Marks $
contains no points in $ \marked{\Po}_{\la \ka} $. Since $ \vol(\Omega_\theta)
= \bigl( \frac{\theta \eps u}{D} \bigr)^d \vol(\Omega) $ and
$ \Omega_\theta \subseteq \Omega $, the probability that $ \Omega_\theta \times \Marks $
contains no point in $ \marked{\Po}_{\la \ka} $ is bounded from above by
$ \exp \bigl[ - \la m \bigl( \frac{\theta \eps u}{D} \bigr)^d \vol(\Omega) \bigr] $,
where $ m = \inf_{x \in \Omega} \ka(x) $. Consequently,
\begin{equation}
\label{eq:SIG:Stab:SuperExp}
 \P \bigl( R^{\Omega,i}(\marked{x}, \marked{\Po}_{\la \ka}) > u \bigr)
  \le \exp \left[ - \la m \left( \frac{\eps u}{11 D} \right)^d \vol(\Omega) \right]
        + \exp \left[ - \la m \left( \frac{2 \eps u}{5 D} \right)^d \vol(\Omega) \right]
  \le 2 \, e^{- b \la u^d} \, ,
\end{equation}
where $ b = m \bigl( \eps/(11 D) \bigr)^d \vol(\Omega) $. Thus, the radius of stabilization satisfies
$ \P(R^{\Omega}(\marked{x}, \marked{\Po}_{\la \ka}) > u)
\le 2 s \, e^{- b \la u^d} $.

Recalling that $ \xi_\la(\marked{x}, \marked{\X}) = \xi \bigl( \la^{1/d} \marked{x},
\la^{1/d} \marked{\X} \bigr) $, observe that if $ \xi $ is confined to $ \NSIG $, then
$ \xi_\la $ is also confined to $ \NSIG $. Therefore,
$ R^{\Omega} $ is a radius of stabilization for $ \xi_\la $ inside $ \Omega $,
so that we can set $ R_\la(\marked{x}, \marked{\X}) := \la^{1/d} R^{\Omega}(\marked{x}, \marked{\X}) $.
Then we have $ \P(R_\la(\marked{x}, \marked{\Po}_{\la \ka}) > u) \le 2s \, e^{-b u^d} $.
Similarly as in \eqref{eq:NG:SuperExpDec}, it follows that the family $ (R_\la)_{\la > 0} $
satisfies Assumption~$\MomGrInt(1/d, \ka)$. This puts us into the position to formulate the
following result:

\begin{theo}
\label{SIG}
Let $ k \in \N $ and let $ \xi $ be a geometric functional confined to
$ \NSIG $. Take a convex bounded domain $ \Omega $ and
$ \ka $ satisfying Assumption~$\Dens$ and with
$ \inf_{x \in \Omega} \ka(x) > 0 $. Let $ \la_0 > 0 $ and
let the cones $ C_i $ be as above.
\begin{enumerate}
\item[$(1)$] If the family $ (\xi_\la)_{\la > \la_0} $ satisfies
Assumption~$\MomlaOne(p, \ka)$ for some $ p > 2 $, then
$ \xi $ satisfies
\newline
Assumptions~$\ConvergVar(\ka)$. Consequently,
\eqref{eq:Varlim} holds.
\item[$(2)$] Let $ 0 < \tau < \infty $ and suppose that the family
$ (\xi_\la)_{\la > \la_0} $ satisfies Assumption~$\MomHomOne(p, \tau, \Omega)$
for some $ p > 2 $. Next, suppose that notably many pairs $ (t, \marked{\X}) $
satisfy the following two conditions: first,
$ \Deltaxi \bigl( (\0, t), \marked{\X} \bigr) \ne 0 $; second,
there exist $ \rho > 0 $ and $ r > 4 \rho $, such that none of the sets
$ \marked{\X} \cap C_i \cap B_\rho(\0) $,
$ \marked{\X} \cap C_i \cap (B_r(\0) \setminus B_{4 \rho}(\0)) $ and
$ \marked{\X} \cap (C_i \setminus B_{4(r+\rho)}(\0)) $ is empty.
Then, with $ \Vxi $ as in \eqref{eq:Vxi}, we have $ \Vxi(\tau) > 0 $.
\item[$(3)$] Let $ \alpha \ge 0 $. If $ \xi $ satisfies Assumption~$\MomGrInt(\alpha)$,
then it satisfies Assumption~$\Gen(2 + \alpha)$; if $ \xi $ satisfies
Assumption~$\MomGrPt(\alpha)$, it satisfies Assumption~$\Gen(\max \{ \alpha, 1 \} + 1)$.
Consequently, the conclusions of Theorems~\ref{probabilities}, \ref{MDPTWI} and \ref{MDPmeasure}
hold with suitable $ \gamma $.
\end{enumerate}
\end{theo}

\begin{proof}
Parts~$(1)$ and $(3)$ follow exactly in the same way as parts~$(1)$ and $(4)$
of Theorem~\ref{NN}. Now we turn to part~$(2)$. Clearly, the family
$ (R_\la)_{\la > \la_0} $ satisfies Assumption~$\MomHom(q, \tau, \Omega)$
for all $ q \ge 0 $.
By Theorem~\ref{NDeg}, it remains to show that for each $ t \in \Marks $,
any finite configuration $ \marked{\X} \subset \marked{\R}^d \setminus \{ (\0, t) \} $
satisfying the specified conditions is externally stable at $ (\0, t) $ with respect to $ \xi $.

First, we claim that $ \marked{\X} $ is basically $ (r + \rho) $-externally stable at
$ (\0, t) $ with respect to $ \NSIG $. Take a finite configuration $ \marked{\Y} $
with $ \marked{\Y} \cap B_{r + \rho}(\0) = \marked{\X} \cap B_{r + \rho}(\0) $ and
$ \marked{z} \in \marked{\Y} \setminus B_{r + \rho}(\0) $. What we have to show is
that inserting a marked point at the origin into $ \Y $ does not affect the set of
marked points adjacent to $ \marked{z} $ in the sphere of influence graph.

Inserting $ (\0, t) $ into $ \marked{\Y} $ can affect the set of points adjacent to
$ \marked{z} $ in two ways: either it can make a new edge between $ (\0, t) $ and
$ \marked{z} $, or it can make some other point $ \marked{x} $ no longer adjacent to
$ \marked{z} $. The latter can happen if $ \marked{x} $ is adjacent to $ \marked{z} $ in
$ \SIG(\marked{\Y}) $ and $ \0 \in B(\marked{x}, \marked{\Y}) $. Therefore, it suffices
to show that $ \0 \in B(\marked{x}, \marked{\Y}) $ for no $ \marked{x} \in \marked{\Y} \setminus B_\rho(\0) $,
that no point $ \marked{x} \in \marked{\Y} \cap B_\rho(\0) $ is adjacent to
$ \marked{z} $ in $ \SIG(\marked{\Y}) $ and that $ (\0, t) $ is not adjacent to
$ \marked{z} $ in $ \SIG(\marked{\Y} \cup \{ (\0, t) \}) $.

Take $ \marked{x} \in \marked{\Y} \setminus B_\rho(\0) $. Recall that $ x \in C_i $
for some $ i $ and that $ \marked{\Y} \cap C_i \cap B_\rho(\0) =
\marked{\X} \cap C_i \cap B_\rho(\0) $ contains at least one point, say,
$ \marked{y} $. By elementary geometry, $ \| x - y \| < \| x \| $. Therefore,
$ \0 \notin B(\marked{x}, \marked{\Y}) $.

Now take $ \marked{x} \in \marked{\Y} \cap (B_\rho(\0) \setminus \{ \0 \}) $. Again,
choose $ i $ with $ x \in C_i $. In addition, choose $ v \in \R^d \setminus \{ \0 \} $,
such that the angle between $ x $ and $ v $ equals $ \pi/4 $.
Clearly, $ v \notin C_i $, but $ v \in C_j $ for some $ j $.
There exists $ \marked{w} \in \marked{\Y} \cap C_j \cap B_\rho(\0) $. Then
$ \marked{w} \ne \marked{x} $, but the angle between $ w $ and $ x $ is less than $ \pi/3 $, so that
$ \| w - x \| < \rho $. In other words, the intersection $ (\marked{\Y} \setminus \{ \marked{x} \})
\cap B_\rho(\marked{x}) $ is non-empty. Now choose $ k $, such that $ z \in C_k $, and choose
$ \marked{y} \in \marked{\Y} \cap C_k \cap B_r(\0) \setminus B_{4 \rho}(\0)) $. By
elementary geometry, we have $ C_k \setminus B_{4 \rho}(\0) \subseteq C_k^+ + x $,
so that $ y \in (C_k^+ + x) \setminus B_{3 \rho}(x) $. Observe that $ z \in C_k^+ + x $,
but also $ \| z - x \| \ge \| z \| - \| x \| > r \ge \| y - x \| $, so that
$ z \in (C_k^+ + x) \setminus B_{\| y - x \|}(x) $. By Proposition~\ref{SIG:Stab},
$ \marked{x} $ and $ \marked{z} $ are not adjacent in $ \SIG(\marked{\Y}) $.

Finally, take $ \marked{x} = (\0, t') \in \marked{\Y} \cup \{ (\0, t) \} $.
Again, let $ z \in C_k $. As none of the sets $ (\marked{\Y} \setminus \{ \marked{x} , (\0, t) \})
\cap B_\rho(\0) $ and $ \marked{\Y} \cap C_k \cap (B_r(\0) \setminus B_{4 \rho}(\0)) $
is empty, the conditions of Proposition~\ref{SIG:Stab} are fulfilled, so that
$ \marked{x} $ and $ \marked{z} $ are adjacent neither in $ \SIG(\marked{\Y}) $
nor in $ \SIG(\marked{\Y} \cup \{ (\0, t) \}) $. Thus, we conclude that
$ \marked{\X} $ is basically $ (r + \rho) $-externally stable at $ (\0, t) $ with
respect to $ \NSIG $.

By Proposition~\ref{StabExtStab} and confinement, it remains to show that
$ \NSIG $ stabilizes with respect to $ \marked{\X} $ and $ \marked{\X} \cup \{ (\0, t) \} $
at all $ \marked{x} \in (\marked{\X} \cup \{ (\0, t) \}) \cap B_{r+\rho}(\0) $.
Clearly, for all such $ \marked{x} $, the intersection $ (\marked{\X} \setminus \{ \marked{x} \})
\cap B_{r + \rho}(\marked{x}) $ is non-empty. Take $ i = 1, 2, \ldots , s $ and recall
that there exists $ \marked{y} \in \marked{\X} \cap (C_i \setminus B_{4(r + \rho)}(\0)) $.
However, we then have $ y \in (C_i^+ + x) \setminus B_{3(r + \rho)}(x) $. By
Proposition~\ref{SIG:Stab}, we then have $ \NSIG(\marked{x}, \marked{\Y}) \cap (C_i^+ + x)
= \NSIG(\marked{x}, \marked{\X}) \cap (C_i^+ + x) $ and
$ \NSIG(\marked{x}, \marked{\Y} \cup \{ (\0, t) \}) \cap (C_i^+ + x)
= \NSIG(\marked{x}, \marked{\X} \cup \{ (\0, t) \}) \cap (C_i^+ + x) $ for all
$ \marked{\Y} $ with $ \marked{\Y} \cap B_{2 \| y - x \|}(x) = \marked{\X} \cap
B_{2 \| y - x \|}(x) $. Since this can be deduced for all $ i = 1, \ldots , s $,
$ \NSIG $ stabilizes at $ \marked{x} $ with respect to $ \marked{\X} $
and $ \marked{\X} \cup \{ (\0, t) \} $. As a result, $ \marked{\X} $
is externally stable at $ (\0, t) $ with respect to $ \xi $. The proof is now
completed by Theorem~\ref{NDeg}.
\end{proof}

Theorem~\ref{SIG} adds to the existing results on non-degeneracy of the limiting
variance (see \cite{PY1}), central limit theorems (see Chapter~4 of \cite{Pe} as
well as \cite{BY2}) and, of course, large deviation results. The sphere of influence
graphs were not considered in \cite{BESY}.

\begin{exmpl}
\emph{Total number of edges}. Define $ \xi(x, \X) $ to be half the degree of
$ x $ in $ \SIG(\X) $ (assume that there are no marks). Then $ \langle 1, \muxilaka \rangle $
is precisely the total number of edges in $ \SIG(\X) $.

First, we turn to moment bounds. Take a bounded convex domain $ \Omega $
with $ \vol(\Omega) > 0 $ and a probability density function $ \ka $ with
$ \inf_\Omega \ka > 0 $, but vanishing outside $ \Omega $.
From the construction of the radius of stabilization,
it follows that for all $ \X \subseteq \Omega $, we have:
$$
 \bigl| \xi_\la(x, \X) \bigr|
    = \bigl| \xi(x, \X) \bigr|
  \le \bigl| \X \cap B_{R^\Omega(x, \X)}(x) \bigr|
    = \sum_{y \in \X} \1 \bigl( R^\Omega(x, \X) \ge \| y - x \| \bigr) \, .
$$
Let $ k \in \N $. Applying Lemma~\ref{PowSum} with $ a = k $ and $ b = 2k $ combined
with \eqref{eq:SIG:Stab:SuperExp} (notice that $ b $ in Lemma~\ref{PowSum} is different
from $ b $ in \eqref{eq:SIG:Stab:SuperExp}), we find that for some $ A_1 $ and $ A_2 $ not
depending on $ k $, we have:
\begin{equation}
\label{eq:SIG:Tot:SuperExp}
\begin{split}
 \Bigl( \E \bigl| \xi_\la(x, \Po_{\la \ka}) \bigr|^k \Bigr)^{1/k}
  &\le A_1 k \left\{ \left[
      \la \int_{\R^d} e^{- b \lambda \| y - x \|^d} \, \ka(y) \,\dl y
    \right]^{1/(2k)}
    + \la \int_{\R^d} e^{- b \lambda \| y - x \|^d/(2k)} \, \ka(y) \,\dl y
    \right\} \le
\\
  &\le A_2 k^2 \, .
\end{split}
\end{equation}
Combining this estimate with the observation $ |\xi_\la(x, \X \cup \Y)| \le |\xi_\la(x, \X)|
+ |\Y| $ and Stirling's formula, we find that the family $ (\xi_\la)_{\la > 0} $ satisfies
Assumption~$\MomGrPt(2, \ka)$. By part~$(3)$ of Theorem~\ref{SIG}, it then satisfies
Assumptions~$\Gen(3, \ka)$. Again, the range where moderate deviation results apply is
independent of the dimension.

Although non-degeneracy of the limiting variance is already proved in \cite{PY1}, we here
demonstrate that it also follows from part~$(2)$ of Theorem~\ref{SIG}: choose any
$ \rho > 0 $ and $ r > 4 \rho $. Letting $ c = \cos \frac{\pi}{12} $,
observe that if $ \X \cap B_{\rho/(2 c)}(\0) $ is empty, but each of the sets
$ \X \cap C_i \cap \bigl( B_\rho(\0) \setminus B_{\rho/(2 c)}(\0) \bigr) $ contains at
least two points, then $ \0 $ is the nearest neighbor of no point in $ \X $. Therefore,
insertion of the origin cannot remove any edges in $ \SIG $, but it adds at least
the edge between $ \0 $ and its nearest neighbor in $ \X $, so that
$ \Delta(\0, \X) \ne 0 $. Clearly, this condition along with
non-emptiness of the sets $ \X \cap C_i \cap (B_r(\0) \setminus B_{4 \rho}(\0)) $ and
$ \X \cap (C_i \setminus B_{4(r+\rho)}(\0)) $ is fulfilled for notably many
configurations $ \X $.

Next, from \eqref{eq:SIG:Tot:SuperExp} and again the fact that
$ |\xi_\la(x, \X \cup \Y)| \le |\xi_\la(x, \X)|
+ |\Y| $, it follows that the family $ (\xi_\la)_{\la > 0} $ satisfies
Assumption~$\MomHomOne(p, \tau, \Omega) $ for all $ p, \tau > 0 $. By
part~$(2)$ of Theorem~\ref{SIG}, $ \Vxi(\tau) > 0 $ for
all $ \tau > 0 $.
\end{exmpl}

\section{Proofs of the results}

\subsection{Moment measures and Palm distributions}
\label{ssc:MomPalm}

For a random measure $ \mu $ taking values in the space of
Borel measures over $ \R^d $, define its \emph{$ k $-th moment measure}
$ \Mom^k(\mu) $ as the one characterized by:
\begin{equation}
\label{eq:Mk}
 \bigl\langle f_1 \otimes \cdots \otimes f_k, \Mom^k(\mu) \bigr\rangle
  = \E \Bigl[ \langle f_1, \mu \rangle \cdots \langle f_k, \mu \rangle \Bigr]
\end{equation}
for all $ f_1, \ldots , f_k \in \B(\R^d) $, where $ f_1 \otimes \cdots \otimes f_k
\Colon (\R^d)^k \to \R $ is given by $ f_1 \otimes \cdots \otimes f_k(v_1, \ldots, v_k)
= f_1(v_1) \cdots f_k(v_k) $ (formula~$(5.4.3)$ on p.~133 of \cite{DV1});
the $ k $-th moment measure exists if the \emph{mixed moments} in the right-hand side of
\eqref{eq:Mk} exist for all $ f_1, \ldots , f_k \in \B(\R^d) $.

It will be helpful to consider products of $ \R^d $ and $ \marked{\R}^d $ indexed by
arbitrary finite sets: for a finite index set $ L $ consisting of distinct
elements $ i_1, \ldots, i_l $, denote by $ (\R^d)^L $ (respect.\ $ (\marked{\R}^d)^L $)
the product of $ l $ copies of $ \R^d $ (resp.\ $ \marked{\R}^d $).
Thus, for functions $ f_i \Colon \R^d \to \R $, $ i \in L $,
$ \bigotimes_{i \in L} f_i \Colon (\R^d)^L \to \R $ is
the counterpart of the function $ f_{i_1} \otimes \cdots \otimes f_{i_l} \Colon
(\R^d)^l \to \R $. In the special case where all functions are equal,
define $ f^{\otimes l} := \underbrace{f \otimes \cdots \otimes f}_l $
and its counterpart $ f^{\otimes L} := \bigotimes_{i \in L} f $.
For a random measure $ \mu $ on $ \R^d $, let $ \Mom^L(\mu) $ be the measure
on $ (\R^d)^L $, which is the counterpart of $ \Mom^l(\mu) $, i.~e.,
$ \bigl\langle \bigotimes_{i \in L} f_i, \Mom^L(\mu) \bigr\rangle =
\E \Bigl[ \prod_{i \in L} \langle f_i, \mu \rangle \Bigr] $.

For the random measures which are the subject of the present paper,
write $ \Mom^k_\la := \Mom^k(\muxilaka) $ for $ k \in \N $ and
$ \Mom^L_\la := \Mom^L(\muxilaka) $ for a finite set $ L $.
These moment measures can be expressed in terms of singular measures,
see~\eqref{eq:prod}. That formula, also stated in \cite[p.~143]{DV1}, is
a special case of the Palm disintegration formula for a product of $ k $ copies
of $ \marked{\Po}_{\la \ka} $. First, recall the Palm formula for
$ \marked{\Po}_{\la \ka} $: for each functional $ G $, such that the integral and
the expectation below exist, we have:
\begin{equation}
\label{eq:Palm}
 \E \int_{\marked{\R}^d} G(\marked{x}, \marked{\Po}_{\la \ka}) \,
  \marked{\Po}_{\la \ka}(\dl \marked{x})
 = \la \int_{\marked{\R}^d} \E G(\marked{x}, \marked{\Po}_{\la \ka} \cup
   \{ \marked{x} \}) \, \ka(x) \,\dl \marked{x}
\end{equation}
(for the unmarked case, see \cite[pp.~280--281]{DV2}; the extension to marked
Poisson processes can be achived by conditioning on the marks; see Section~6.4 of
\cite{DV1}). To generalize this disintegration formula to $ k $-fold
integrals, we need singular measures. First, recall \eqref{eq:diffMarked} and for a
measurable function $ g \Colon \R^d \to \R $, define the
singular differential $ \bdl[g]{\marked{v}} $ of a $ (\marked{\R}^d)^k $-valued
variable as being characterized by the relation:
$$
 \int_{(\marked{\R}^d)^k} F(\marked{v}_1, \marked{v}_2, \ldots, \marked{v}_k)
  \, \,\bdl[g]{(\marked{v}_1, \ldots, \marked{v}_k)}
  = \int_{\marked{\R}^d} F(\marked{x}, \marked{x}, \ldots, \marked{x}) \, g(x) \, \dl \marked{x}
$$
for all measurable $ F \Colon (\marked{\R}^d)^k \to \R $.
Next, for $ \marked{v} = (\marked{v}_1, \ldots, \marked{v}_k) $ running over
$ (\marked{\R}^d)^k $, put:
$$
 \tdl[g]{\marked{v}} := \sum_{L_1, \ldots, L_p \preceq \{ 1, \ldots, k \}}
  \bdl[g]{\marked{v}_{L_1}} \cdots \bdl[g]{\marked{v}_{L_p}} \, ,
$$
where $ \marked{v}_L := (\marked{v}_l)_{l \in L} $; by $ \sum_{L_1, \ldots, L_p \preceq L} $,
we shall denote the sum of all \emph{unordered} partitions of a set $ L $.
Below we prove the following assertion, which generalizes the disintegration formula
\eqref{eq:Palm} to the $ k $-fold integral (see also p.~83 of \cite{Krk}):

\begin{prop}
\label{MultiPalm}
For each functional $ G $, such that the integral and the expectation below exist,
we have:
\begin{equation}
\label{eq:MultiPalm}
 \E \int_{(\marked{\R}^d)^k} G(\marked{v}, \marked{\Po}_{\la \ka}) \,
  \marked{\Po}_{\la \ka}(\dl \marked{v}_1) \cdots \marked{\Po}_{\la \ka}(\dl \marked{v}_k)
 = \int_{(\marked{\R}^d)^k} \E G(\marked{v}, \marked{\Po}_{\la \ka} \cup
   \{ \marked{v}_1, \ldots , \marked{v}_k \}) \, \tdl[\la \ka] \marked{v} \, ,
\end{equation}
where $ \marked{v} = (\marked{v}_1, \ldots, \marked{v}_k) $.
\end{prop}

\begin{proof}
As a first step, we prove \eqref{eq:MultiPalm} for the case where $ G(\marked{v}, \marked{\X}) $
vanishes if any two components $ v_i $ and $ v_j $ are equal. This can be proved by induction.
For $ k = 1 $, this is merely the formula~\eqref{eq:Palm}. For the induction step
from $ k $ to $ k + 1 $, use \eqref{eq:Palm} with
$ \int_{(\marked{\R}^d)^k} G(\marked{v}, \marked{\Po}_{\la \ka}) \,
\marked{\Po}_{\la \ka}(\dl \marked{v}_1) \cdots \marked{\Po}_{\la \ka}(\dl \marked{v}_k) $
in place of $ G(\marked{v}, \marked{\Po}_{\la \ka}) $ and notice that
the integration over $ \Po_{\la \ka} \cup \{ v_{k+1} \} $ coincides with the
integration over $ \Po_{\la \ka} $.

Next, observe the following straightforward extension. Let
$ L_1, \ldots , L_p $ be a partition of $ \{ 1, \ldots, k \} $. We say that
a point $ \marked{v} = (\marked{v}_1, \ldots, \marked{v}_k) $ \emph{follows}
this partition if any two components $ v_i $ and $ v_j $ are equal if and
only if the indices $ i $ and $ j $ lie in the same set $ L_r $. Now take
arbitrary $ G $, and define $ G_{L_1, \ldots , L_p}(\marked{v}, \marked{\X}) $
to be $ G(\marked{v}, \marked{\X}) $ if $ \marked{v} $ follows $ L_1, \ldots, L_p $
and zero otherwise. Then we have:
\begin{equation*}
\begin{split}
 & \E \int_{(\marked{\R}^d)^k} G_{L_1, \ldots, L_p}(\marked{v}, \marked{\Po}_{\la \ka}) \,
  \marked{\Po}_{\la \ka}(\dl \marked{v}_1) \cdots \marked{\Po}_{\la \ka}(\dl \marked{v}_k) = \null
\\ & \kern 3em \null
 = \int_{(\marked{\R}^d)^k} \E G_{L_1, \ldots, L_p}(\marked{v}, \marked{\Po}_{\la \ka} \cup
   \{ \marked{v}_1, \ldots , \marked{v}_k \}) \, \bdl[\la \ka] \marked{v}_{L_1}
     \cdots \bdl[\la \ka] \marked{v}_{L_p}
\\ & \kern 3em \null
 = \int_{(\marked{\R}^d)^k} \E G(\marked{v}, \marked{\Po}_{\la \ka} \cup
   \{ \marked{v}_1, \ldots , \marked{v}_k \}) \, \bdl[\la \ka] \marked{v}_{L_1}
     \cdots \bdl[\la \ka] \marked{v}_{L_p} \, .
\end{split}
\end{equation*}
Now write $ G = \sum_{L_1, \ldots, L_p \preceq \{ 1, \ldots, k \}} G_{L_1, \ldots, L_p} $,
sum up over all non-trivial partitions of $ \{ 1, \ldots, k \} $ and the proof
is complete.
\end{proof}

Now take a geometric functional $ \xi $ and recall the definition \eqref{eq:muxilaka} of its
associated random measure $ \muxilaka $. From Proposition~\ref{MultiPalm}, we deduce
that the corresponding moment measures $ \Mom_\la^k = \Mom^k(\muxilaka) $ can be expressed as:
\begin{equation}
\label{eq:prod1}
 \int_{(\R^d)^k} F(v) \, \Mom_{\la}^k(\dl v) =
 \int_{(\marked{\R}^d)^k} F(v) \, \mom_{\la}(\marked{v}) \, \tdl[\la \ka]{\marked{v}} \, ,
\end{equation}
where $ v = (v_1, \ldots, v_k) $ and again $ \marked{v} = (\marked{v}_1, \ldots , \marked{v}_k) $,
and where the Radon--Nikod\'ym derivative $ m_{\la} $ is given by:
\begin{equation}
\label{eq:prod}
 \mom_{\la}(\marked{v}_1, \ldots, \marked{v}_k) := \E \left[ \prod_{i=1}^k
 {\xi}_{\la} \bigl( \marked{v}_i, \marked{\Po}_{\la \ka} \cup
  \{ \marked{v}_1, \ldots, \marked{v}_k \} \bigr) \right] \, .
\end{equation}
Analogously, we define $ \mom_\la $ on products indexed by arbitrary index
sets, i.~e., $ \R^L $.

\subsection{The method of cumulants}
\label{ssc:MethCum}

We will refine the method of
cumulants and cluster measures as developed in \cite{BY2} in the
context of the central limit theorem.  We recall the formal
definition of cumulants in the context specified for our purposes.
For a random variable $ Y $ with all moments, expanding
the logarithm of the Laplace transform in a formal power series in $ t $ gives
\begin{equation}
\label{eq:LGEX}
 \log \left[ 1 + \sum_{k=1}^\infty \frac{\E Y^k}{k!} \, t^k
  \right] = \sum_{k=1}^\infty \frac{\cu^k(Y)}{k!} \, t^k \, ,
\end{equation}
where $ \cu^k(Y) $ denotes the \emph{$ k $-th cumulant} of $ Y $.
As the series \eqref{eq:LGEX} is considered as formal, no additional condition on
convergence is required for the cumulants to exist. Defining differentiation,
evaluation at zero, and the exponential and the logarithmic function of a formal
power series in the obvious way, one may also write:
$$
 \cu^k(Y) = \left. \frac{\dl^k}{\dl t^k} \right|_{t = 0} \log \E \exp( t Y )
$$
Similarly as mixed moments, one can also consider \emph{mixed cumulants}.
In the spirit of the above, one can define it by means of formal power
series of several variables:
\begin{equation}
\label{eq:CumGF}
 \cu(Y_1, \ldots, Y_k) = \left. \frac{\partial^k}{\partial t_1 \, \partial t_2 \cdots \partial t_k}
  \right|_{t_1 = t_2 = \cdots = t_k = 0} \log \E \exp( t_1 Y_1 + \cdots + t_k Y_k ) \, .
\end{equation}
In other words, the mixed cumulant of random variables $ Y_1, \ldots, Y_k $
is the coefficient in the formal power series expansion of
$ \log \E \exp( t_1 Y_1 + \cdots + t_k Y_k ) $ at $ t_1 t_2 \cdots t_k $.
Notice also that $ \cu^k(Y) = \cu(\underbrace{Y,\ldots,Y}_k) $.

To define the mixed cumulant of random variables $ Y_1, \ldots, Y_k $,
we do not even need all the moments to exist. All we need is the existence
of the expectations of the products $ \prod_{i \in L} Y_i $, where
$ L \subseteq \{ 1, \ldots, k \} $. This is because one can replace
the exponential function $ \exp( t_1 Y_1 + \cdots + t_k Y_k ) $ by
the polynomial $ g(t_1, \ldots, t_k) = \sum_{L \subseteq \{ 1, \ldots, k \}}
\E \prod_{i \in L} Y_i t_i $: the mixed cumulant $ \cu(Y_1, \ldots, Y_l) $
is then also the coefficient in the formal power series expansion of
$ \log g(t_1, \ldots, t_k) $ at $ t_1 t_2 \cdots t_k $.

In view of the above, mixed cumulants can be expressed in terms of mixed
moments. This can be made explicit by means of the following extension of the
celebrated \emph{Fa\`a di Bruno's formula} to functions of several variables:
\begin{equation}
\label{eq:FDB}
 \frac{\partial^k}{\partial t_1 \cdots \partial t_k} f(g(t_1, \ldots, t_k))
  = \sum_{L_1, \ldots, L_p \preceq \{ 1, \ldots, k \}} f^{(p)}(g(t_1, \ldots, t_k)) \,
   \frac{\partial^{|L_1|} g}{\prod_{i \in L_1} \partial t_i}
   \cdots
   \frac{\partial^{|L_p|} g}{\prod_{i \in L_p} \partial t_i}
\end{equation}
(see \cite{HardyFDB} and notice that although the result ibidem is stated
for real functions, the extension to formal power series is straightforward:
once we know the chain and the product rule, Fa\`a di Bruno's formula is a matter
of combinatorics, no longer analysis).
%
%
Combining \eqref{eq:CumGF} and \eqref{eq:FDB}, we obtain the formula for
mixed cumulants:
\begin{equation}
\label{eq:CumMom}
 \cu(Y_1, \ldots, Y_k) = \sum_{L_1, \ldots, L_p \preceq \{ 1, \ldots, k \}} (-1)^{p-1} (p-1)!
  \E \left[ \prod_{i \in L_1} Y_i \right] \cdots \E \left[ \prod_{i \in L_p} Y_i \right] \, .
\end{equation}
(see p.~12 of \cite{SS}).
For a random measure $ \mu $, its $ k $-th cumulant measure $ \Cum^k(\mu) $
is defined analogously as its $ k $-th moment measure, i.~e.,
$ \langle f_1 \otimes \cdots \otimes f_k, \Cum^k(\mu) \rangle
= \cu(\langle f_1, \mu \rangle, \ldots, \langle f_k, \mu \rangle) $.
In particular, for equal functions, we have:
\begin{equation}
\label{eq:CumMeasCum}
 \langle f^{\otimes k}, \Cum^k(\mu) \rangle = \cu^k(\langle f, \mu \rangle) \, .
\end{equation}
In view of \eqref{eq:CumMom}, cumulant measures can be expressed in
terms of moment measures in the following way:
\begin{equation}
\label{eq:CumMomMeas}
 \Cum^k(\mu) = \sum_{L_1, \ldots, L_p \preceq \{ 1, \ldots, k \}}
  (-1)^{p-1} (p-1)! \, \Mom^{L_1}(\mu) \> \cdots \> \Mom^{L_p}(\mu) \, ,
\end{equation}
where the multiplication denotes the usual product of measures:
for disjoint finite sets $ G $ and $ H $, and for measurable sets
$ A \subseteq (\R^d)^G $ and $ B \subseteq (\R^d)^H $, we have
$ MN(A \times B) = M(A) \, N(B) $, identifying $ (\R^d)^{G \cup H}
\equiv (\R^d)^G \times (\R^d)^H $ (see p.~30 of \cite{MM}).

Although we use the same notation for cumulants as well as for cumulant
measures, this should not lead to a confusion: for a \emph{real-valued}
random variable $ Y $, $ \cu^k(Y) $ denotes a cumulant, while
for a random \emph{measure} $ \mu $, $ \Cum^k(\mu) $ denotes a
cumulant measure. Observe also that the first cumulant measure coincides
with the expectation measure and the second cumulant measure coincides
with the covariance measure.


Throughout this subsection, $ \xi $ will (as usual) denote a geometric functional
and $ R $ its radius of stabilization. Recall the random measures
$ \muxilaka $ defined in \eqref{eq:muxilaka} and the corresponding moment measures
$ \Mom_\la^k := \Mom^k(\muxilaka) $. Similarly, consider the cumulant measures
$ \Cum^k_\la := \Cum^k(\muxilaka) $. Recalling the notation
$ \barmuxilaka = \muxilaka - \E \muxilaka $, observe that
$ \Cum^k(\barmuxilaka) = \Cum^k_\la $ for $ k \geq 2 $. Analogously,
define measures $ \Mom_\la^L $ and $ \Cum_\la^L $ defined on product spaces
indexed finite sets $ L $.

Now we can state our result controlling the growth of $ \langle f^{\otimes k}, \Cum_{\la}^k \rangle $,
which is crucial to prove Theorem~\ref{probabilities}.

\begin{lemm}\label{growthlemm}
If $ \xi $ satisfies Assumptions~$\Gen(\gamma, \ka)$, we have:
$$
 \bigl| \langle f^{\otimes k}, \Cum_{\la}^k \rangle \bigr|
  \leq \la \, C^k \| f \|_{\infty}^k (k!)^{1 + \gamma}
$$
for all bounded measurable functions
$ f \Colon \R^d \to \R $, all $ k = 3, 4, \ldots $ and all $ \la \ge \la_0 $,
where the constant $ C $ and the lower endpoint $ \la_0 $ only depend on
$ \ka $, $ \xi $ and $ R $.
\end{lemm}

Before proving the preceding lemma, we need a couple of auxiliary results.
Following \cite{BY2}, we decompose cumulant measures into
\emph{semi-cluster measures}, i.~e., cluster measures multiplied
by moment measures. For non-empty disjoint finite sets $ S $ and $ T $, define
the cluster measure by:
$$
 \Clu_\la^{S,T} = \Mom_\la^{S \cup T} - \Mom_\la^S \, \Mom_\la^T
$$
(where multiplication again means product measure). The following
result is a refinement of Lemma~5.1 of \cite{BY2} in the sense that we
provide control over the number of summands.

\begin{lemm}
\label{CumClust}
For each non-trivial partition $ G, H $ of a finite set $ K $, the cumulant measure
$ \Cum_\la^K $ can be decomposed as:
$$
 \Cum_\la^K = \sum_{L_1, \ldots, L_p \preceq K}
  (-1)^{p-1} (p-1)! \, W_\la^{L_1, \ldots , L_p} \, ,
$$
where $ W_\la^{L_1, \ldots, L_p} $ is a sum of at most $ p $ terms of the form
$ \Clu_\la^{S, T} \Mom_\la^{K_1} \Mom_\la^{K_2} \cdots \Mom_\la^{K_r} $, where
$ S \subseteq G $ and $ T \subseteq H $ are non-empty and disjoint, and where
$ S \cup T, K_1, \ldots, K_r $ is a refinement of the partition $ L_1, \ldots, L_p $.
\end{lemm}

\begin{proof}
Starting from \eqref{eq:CumMomMeas}, we first note
that each moment measure $ \Mom_\la^{L_i} $ with $ S := L_i \cap G \ne \emptyset $
and $ T := L_i \cap H \ne \emptyset $ can be expressed as
$ \Clu_\la^{S, T} + \Mom_\la^S \Mom_\la^T $. Repeating the procedure,
we may write:
\begin{equation}
\label{eq:CumClust:MomClust}
 \Mom_\la^{L_1} \cdots \Mom_\la^{L_p} = \Mom_\la^{L_1 \cap G} \Mom_\la^{L_1 \cap H}
   \cdots \Mom_\la^{L_p \cap G} \Mom_\la^{L_p \cap H} + W_\la^{L_1, \ldots , L_p} \, ,
\end{equation}
where the measures $ W_\la^{L_1, \ldots , L_p} $ are as desired and where
we set $ \Mom_\la^\emptyset := 1 $. Now consider the measure:
\begin{equation}
\label{eq:CumClust:split}
 \Cum_\la^{K; G, H} := \sum_{L_1, \ldots, L_p \preceq K}
  (-1)^{p-1} (p-1)! \, \Mom_\la^{L_1 \cap G} \Mom_\la^{L_1 \cap H}
   \cdots \Mom_\la^{L_p \cap G} \Mom_\la^{L_p \cap H}
\end{equation}
and take functions $ f_i \in \B(\R^d) $, $ i \in K $. By Fa\`a di Bruno's formula~\eqref{eq:FDB},
$ \bigl\langle \bigotimes_{i \in K} f_i, \Cum_\la^{K; G, H} \bigr\rangle $
matches the coefficient in the formal power series expansion of
$ \log g_\la^{K; G, H} $ at $ \prod_{i \in K} t_i $, where:
$$
 g_\la^{K; G, H} := \sum_{L \subseteq K} \biggl\langle \bigotimes_{i \in L} f_i, \>
  \Mom_\la^{L \cap G} \Mom_\la^{L \cap H} \biggr\rangle \prod_{i \in L} t_i \, .
$$
However, $ g_\la^{K; G, H} = g_\la^G g_\la^H $, where $ g_\la^Z := \sum_{L \subseteq Z}
\bigl\langle \bigotimes_{i \in L}, \Mom_\la^L \bigr\rangle \prod_{i \in L} t_i $.
Since $ G $ and $ H $ are both non-empty,
the coefficient at $ \prod_{i \in K} t_i $ in the formal power series
expansion of both $ \log g_\la^G $ and $ \log g_\la^H $ vanishes; clearly,
the same is true for $ \log g_\la^{K; G, H} = \log g_\la^G + \log g_\la^H $.
Therefore, $ \Cum_\la^{K; G, H} = 0 $. Combining this with 
\eqref{eq:CumClust:MomClust} and \eqref{eq:CumClust:split}, the result follows.
\end{proof}

Thus, in order to estimate the cumulants, it suffices to estimate
semi-cluster measures. Recalling \eqref{eq:prod1} and \eqref{eq:prod},
it makes sense, as the first step towards the latter estimation,
to bound the differences $ \mom_\la(v_{S \cup T})
- \mom_\la(v_S) \, \mom_\la(v_T) $; throughout this subsection, we shall denote:
$$
 v_L = (v_i)_{i \in L} \, , \quad \marked{v}_L = (\marked{v}_i)_{i \in L}
 \quad \text{and} \quad
 V_L = \{ v_i \sth i \in L \} \, , \quad \marked{V}_L = \{ \marked{v}_i \sth i \in L \}
$$
for vectors $ v = (v_i)_{i \in K} \in (\R^d)^K $ and
$ \marked v = (\marked{v}_i)_{i \in K} \in (\marked{\R}^d)^K $, where $ L \subseteq K $
(the letters $ v $ and $ V $ are fixed, while the letters $ K $ and $ L $ can be arbitrary).
Next, define the \emph{separation} between two subsets $ A $ and $ B $ of a $ \R^d $ by:
$$
 \sepp(A, B) := \inf \{ \| a - b \| \sth a \in A, \> b \in B \} \, .
$$
Now recall the definition of $ \xi_\la $ along with the conventions on $ R $ and $ R_\la $
from Subsection~\ref{ssc:terminology}; in particular, recall that $ \la^{-1/d} R_\la $ a
radius of stabilization for $ \xi_\la $ inside $ \Omega $. In addition, recall that
$ \ka $ vanishes outside $ \Omega $, so that $ \marked{\Po}_{\la \ka} \subseteq \marked{\Omega} $
almost surely. For a finite set $ L $, $ \marked{v} = (\marked{v}_l)_{l \in L} \in (\marked{\R}^d)^L $,
$ i, j \in L $, $ \la > 0 $ and for a function $ \psi \Colon [0, \infty) \to [0, \infty) $,
define:
\begin{equation}
\label{eq:ab}
 a_{\la, i}(\marked{v}) := \Bigl[ \E \bigl|
  \xi_\la(\marked{v}_i, \marked{\Po}_{\la \ka} \cup \marked{V}_L) \bigr|^{2|L|} \Bigr]^{1/(2|L|)}
  \, , \qquad
 b_{\la, j, \psi}(\marked{v}) := \Bigl[ \E \Bigl(
  \psi \bigl( 2 R_\la(\marked{v}_j, \marked{\Po}_{\la \ka} \cup \marked{V}_L) \bigr)
 \Bigr)^{-2} \Bigr]^{1/2}
 \, .
\end{equation}

\begin{lemm}\label{L52refined}
Let $ S $ and $ T $ be non-empty finite disjoint sets and let
$ \psi \Colon [0, \infty) \to [0, \infty) $ be a non-increasing function. Then for
each $ \marked{v} = \marked{v}_{S \cup T} \in (\marked{\R}^d)^{S \cup T} $, we have:
\begin{equation}
\label{eq:L52refined:main}
\begin{split}
 \bigl| \mom_\la(\marked{v}_{S \cup T}) - \mom_\la(\marked{v}_S) \, \mom_\la(\marked{v}_T) \bigr| &\le
  \Biggl[ \prod_{i \in S \cup T} a_{\la, i}(\marked{v}_{S \cup T}) +
    \left( \prod_{i \in S} a_{\la, i}(\marked{v}_S) \right)
    \left( \prod_{i \in T} a_{\la, i}(\marked{v}_T) \right)
  \Biggr] \times
\\ & \kern 3em \null \times
  \Biggl[ \sum_{j \in S} b_{\la, j, \psi}(\marked{v}_S) + \sum_{j \in T} b_{\la, j, \psi}(\marked{v}_T) \Biggr]
 \, \psi(\la^{1/d} \delta) \, ,
\end{split}
\end{equation}
where $ \delta = \sepp \bigl( \{ v_i \sth i \in S \}, \{ v_j \sth j \in T \} \bigr) $ denotes
the separation with respect to the Euclidean metric.
\end{lemm}

\begin{rem}
This is a refinement of Lemma~5.2 of \cite{BY2} in at least two directions: first, we
state a more explicit upper bound, and second, we allow for arbitrary decay of $ R $
(described in terms of $ \psi $), not just exponential. Moreover, a closer
look reveals that the argument used for the proof of that result in fact
needs stronger assumptions than just exponential stabilization (apart from
moment bounds), as claimed ibidem. More precisely, in our notation, one has
to assume suitable stabilization of the functional $ (\marked{x}, \marked{\X})
\mapsto \xi(\marked{x}, \marked{\X} \cup \marked{V}) $, not $ \xi $, for finite
sets $ \marked{V} $ with suitable cardinality. This is due to a confusion
between $ \marked{\Po}_{\la \ka} $ and $ \marked{\Po}_{\la \ka} \cup \marked{V} $
(see below equation~$(5.4)$ ibidem). Moreover, in order to derive large deviation results
from appropriately corrected Lemma~5.2 of \cite{BY2}, one also needs certain
control over the dependence of the stabilization of
$ (\marked{x}, \marked{\X}) \mapsto \xi(\marked{x}, \marked{\X} \cup \marked{V}) $ on
the cardinality of $ \marked{V} $. These additional conditions can be tedious to verify in
actual applications. On the other hand, our argument, though much
more extensive, works under more or less standard conditions
and leads to a neat result.
\end{rem}

\begin{proof}[Proof of Lemma~\ref{L52refined}]
Take independent Poisson point processes $ \marked{\Po}_{\la \ka} $ and
$ \marked{\Po}_{\la \ka}^\dag $ (both with intensity $ \la \ka \times \P_\Marks $)
and define two new point processes:
\begin{equation*}
\label{eq:clust:PPPEx}
\begin{split}
 \marked{\Po}'_{\la \ka} &:= \bigl( \marked{\Po}_{\la \ka} \cap
   B_{\delta/2}(V_S) \bigr) \cup
  \bigl( \marked{\Po}_{\la \ka}^\dag \setminus B_{\delta/2}(V_S) \bigr) \, ,
\\
 \marked{\Po}''_{\la \ka} &:= \bigl( \marked{\Po}_{\la \ka} \setminus
   B_{\delta/2}(V_S) \bigr) \cup
  \bigl( \marked{\Po}_{\la \ka}^\dag \cap B_{\delta/2}(V_S) \bigr) \, ,
\end{split}
\end{equation*}
where, as usual, $ B_r(V) := \bigcup_{v \in V} B_r(v) $.
Observe that $ \marked{\Po}'_{\la \ka} $ and $ \marked{\Po}''_{\la \ka} $ are independent
Poisson point processes with intensity $ \la \ka \times \P_\Marks $. Setting:
$$
  X_i   := \xi_\la ( \marked{v}_i, \marked{\Po}_{\la \ka} \cup \marked{V}_{S \cup T} )
 \, , \quad
  X'_i  := \xi_\la ( \marked{v}_i, \marked{\Po}'_{\la \ka} \cup \marked{V}_S )
 \, , \quad
  X''_i := \xi_\la ( \marked{v}_i, \marked{\Po}''_{\la \ka} \cup \marked{V}_T )
 \, ,
$$
we may write:
$$
 \mom_\la(\marked{v}_{S \cup T}) - \mom_\la(\marked{v}_S) \, \mom_\la(\marked{v}_T)
 = \E \left[
  \left( \prod_{i \in S} X_i \right) \left( \prod_{i \in T} X_i \right) -
  \left( \prod_{i \in S} X'_i \right) \left( \prod_{i \in T} X''_i \right)
 \right] \, .
$$
Now observe that for $ i \in S $, $ X'_i $ agrees with $ X_i $ if $ \xi_\la $ stabilizes
at $ \marked{v}_i $ within radius less than $ \delta/2 $ with respect to
$ \marked{\Po}_{\la \ka} \cup \marked{V}_S $. Similarly, for $ i \in T $, $ X''_i $
agrees with $ X_i $ if $ \xi_\la $ stabilizes at $ \marked{v}_i $
within radius less than $ \delta/2 $ with respect to $ \marked{\Po}_{\la \ka} \cup \marked{V}_T $.
Letting:
$$
 I'_j  := \1 \left( R_\la \bigl( \marked{v}_j, \marked{\Po}'_{\la \ka} \cup \marked{V}_S \bigr)
  \ge \frac{\la^{1/d} \delta}{2} \right) \, , \qquad
 I''_j := \1 \left( R_\la \bigl( \marked{v}_j, \marked{\Po}''_{\la \ka} \cup \marked{V}_T \bigr)
  \ge \frac{\la^{1/d} \delta}{2} \right) \, ,
$$
we can estimate:
$$
 \bigl| \mom_\la(\marked{v}_{S \cup T}) - \mom_\la(\marked{v}_S) \, \mom_\la(\marked{v}_T) \bigr| \le
  \E \left\{
    \left[
      \prod_{i \in S \cup T} |X_i| +
      \left( \prod_{i \in S} |X'_i| \right) \left( \prod_{i \in T} |X''_i| \right)
    \right]
    \left[ \sum_{j \in S} I'_j + \sum_{j \in T} I''_j \right]
  \right\} \, .
$$
Since $\psi$ is non-increasing, we can estimate:
\begin{equation}
 I'_j  \le \frac{\psi(\la^{1/d} \delta)}{\psi \bigl( 2 R_\la(\marked{v}_j,
   \marked{\Po}'_{\la \ka} \cup \marked{V}_S) \bigr)}
  \quad \text{and} \quad
 I''_j \le \frac{\psi(\la^{1/d} \delta)}{\psi \bigl( 2 R_\la(\marked{v}_j,
   \marked{\Po}''_{\la \ka} \cup \marked{V}_S) \bigr)} \, .
\end{equation}
The proof is now completed by application of H\"older's inequality.
\end{proof}

To estimate the semi-cluster measures, we now need to integrate the estimate
\eqref{eq:L52refined:main}. Before tackling this job, we introduce some more
notation. First, we extend the convention on the \markedname\ accents
to the products $ (\R^d)^K $ and $ (\marked{\R}^d)^K $: if $ v $ and $ \marked{v} $
appear in the same context and if $ \marked{v} $ denotes a marked $ K $-tuple
$ (\marked{v}_i)_{i \in K} \in (\marked{\R}^d)^K $, then we shall assume
that $ v = (v_i)_{i \in K} \in (\R^d)^K $.

Now denote by $ \Delta_d^k := \{ (x, x, \ldots, x) \sth x \in \R^d \} $ the
diagonal in $ (\R^d)^k $; similarly, for a finite set
$ K $, denote by $ \Delta_d^K $ the diagonal in $ (\R^d)^K $.
We also consider the marked diagonal $ \marked{\Delta}_d^K = \{ \marked{v}
\in \marked{\R}^d \sth v \in \Delta_d^K \} $. Next, for
$ v \in (\R^d)^K \setminus \Delta_d^K $,
denote by $ \delta(v) $ the maximal separation between the sets
$ V_G $ and $ V_H $, where $ (G, H) $ runs over all non-trivial partitions of
$ K $ (i.~e., $ G $ and $ H $ are non-empty with union $ K $). Finally, denote by
$ \volB $ the volume of the unit ball in $ \R^d $.

The estimation of suitable integrals in the right-hand side of
\eqref{eq:L52refined:main} will be based on the following result.

\begin{lemm}\label{gpsi}
Let $ K_1, K_2, \ldots, K_r $ be finite disjoint sets with union $ K $.
Put $ k_l := |K_l| $ and $ k := |K| $. Take a non-increasing function
$ \psi \Colon [0, \infty) \to [0, \infty) $ with $ \lim_{t \to \infty} \psi(t) = 0 $
and with finite Riemann--Stieltjes integral
$ \int_0^\infty t^{(k-1)d} \, \dl (- \psi)(t) $, $ \la > 0 $, a marked
Poisson point process $ \marked{\Po}_{\la \ka} $, a non-negative geometric functional
$ g $ and $ i \in K_1 $. Then we have:
$$
 \int_{(\marked{\R}^d)^K \setminus \marked{\Delta}_d^K}
  \E g \bigl( \marked{v}_i, \marked{\Po}_{\la \ka} \cup \marked{V}_{K_1} \bigr)
  \psi \bigl( \la^{1/d} \delta(v) \bigr)
 \prod_{l=1}^r \tdl[\la \ka]{\marked{v}_{K_l}} 
 \le \la \, Q(k, \ka, \psi) \left[ \int_{\marked{\R}^d}
  \E \bigl( g(\marked{x}, \marked{\Po}_{\la \ka}) \bigr)^2 \ka(x) \,\dl \marked{x}
 \right]^{1/2} \, ,
$$
where $ \dst Q(k, \ka, \psi) = 2^{k-1} \, k! \int_0^\infty \bigl( 1 + e \| \ka \|_\infty \volB t^d \bigr)^{k-1}
\, \dl(- \psi)(t) $.
\end{lemm}

\noindent
Before proving Lemma~\ref{gpsi}, we need one more auxiliary result.

\begin{lemm}\label{partfact}
For all $ k \in \N $ and all $ u \in \R $, we have:
\begin{equation}
\label{eq:partfact}
 \sum_{L_1, \ldots, L_p \preceq \{ 1, \ldots, k \}} p! \, |L_1|! \, |L_2|! \cdots |L_p|! \, u^{p-1}
  = (1 + u)^{k-1} \, k! \, .
\end{equation}
\end{lemm}

\begin{proof}
Let $ f(y) = 1/(u(u + 1 - uy)) $, $ g(x) = 1/(1 - x) $ and observe that
the $ k $-th derivative of $ f(g(x)) $ at $ x = 0 $ matches the right hand
side of \eqref{eq:partfact}. Then apply Fa\`a di Bruno's formula \eqref{eq:FDB}.
\end{proof}

\begin{coro}\label{partfactalpha}
For all $ k \in \N $ and $ \alpha \ge 0 $, we have:
\begin{equation}
\label{eq:partfactalpha}
 \sum_{L_1, \ldots, L_p \preceq \{ 1, \ldots, k \}} p! \, (|L_1|!)^\alpha \, (|L_2|!)^\alpha \cdots (|L_p|!)^\alpha
  \le 2^{k-1} \, (k!)^{\max \{ \alpha, 1 \}} \, .
\end{equation}
\qed
\end{coro}

\begin{rem}
Clearly, the exponent $ \max \{ \alpha, 1 \} $ cannot be reduced.
\end{rem}

\begin{proof}[Proof of Lemma~\ref{gpsi}]
Let $ K' := K \setminus \{ i \} $, $ K'_1 := K_1 \setminus \{ i \} $ and $ K'_l := K_l $
for $ l = 2, 3, \ldots, r $. Next, take independent Poisson point processes
$ \marked{\Po}_{\la \ka}^{(1)}, \ldots, \marked{\Po}_{\la \ka}^{(r)} $ (all with intensity
$ \la \ka \times \P_\Marks $). Applying \eqref{eq:MultiPalm}, we may write:
\begin{equation*}
\begin{split}
 J &:= \int_{(\marked{\R}^d)^K \setminus \marked{\Delta}_d^K}
  \E g \bigl( \marked{v}_i, \marked{\Po}_{\la \ka} \cup \marked{V}_{K_1} \bigr)
  \, \psi \bigl( \la^{1/d} \delta(v) \bigr) \prod_{l=1}^r \tdl[\la \ka]{\marked{v}_{K_l}} =
\\
 &\phantom{:}= \E \int_{(\marked{\R}^d)^K \setminus \marked{\Delta}_d^K} g \bigl( \marked{v}_i, \marked{\Po}_{\la \ka}^{(1)} \bigr)
  \, \psi \bigl( \la^{1/d} \delta(v) \bigr)
  \left( \bigotimes_{l=1}^r \bigl( \marked{\Po}_{\la \ka}^{(l)} \bigr)^{\otimes K_l} \right) (\dl \marked{v})
\end{split}
\end{equation*}
(where $ \marked{v} = (\marked{v}_j)_{j \in (\marked{\R}^d)^K} $).
Next, we may asssume without loss of generality that $ \psi $ is left continuous,
so that we can write $ \psi(x) = \int_{[0, \infty)} \1(x \le t) \,\mu(\dl t) $
for some positive measure $ \mu $. Plugging this into the preceding equation, we obtain:
$$
 J = \int_{[0, \infty)} \E \int_{\marked{\R}^d} \int_{(\marked{\R}^d)^{K^\prime}}
  g \bigl( \marked{v}_i, \marked{\Po}_{\la \ka}^{(1)} \bigr) \,
  \1 \bigl( \delta(v) \le \la^{-1/d} t \bigr)
  \left( \bigotimes_{l=1}^r \bigl( \marked{\Po}_{\la \ka}^{(l)} \bigr)^{\otimes K_l^\prime} \right)
 (\dl \marked{v}_{K^\prime}) \, 
  \marked{\Po}_{\la \ka}^{(1)}(\dl \marked{v}_i) \, \mu(\dl t)
$$
(identifying $ v \in (\R^d)^K $ with $ (v_{K^\prime}, v_i) \in
(\R^d)^{K^\prime} \times \R^d) $). Now consider the graph with vertex set $ K $,
where vertices $ j $ and $ l $ are adjacent if $ \| v_j - v_l \| \le \la^{-1/d} t $.
Observe that $ \delta(v) \le \la^{-1/d} t $ if and only if this graph is connected.
Therefore, if $ \delta(v) \le \la^{-1/d} t $, then $ \| v_j - v_i \| \le N(v) \la^{-1/d} t $
for all $ j \in K $, where $ N(v) := | \{ v_j \sth j \in K \} | - 1 $. Next, estimating the
expression under the second integral sign by the Cauchy--Schwarz inequality, we find that:
\begin{equation}
\label{eq:gpsi:J}
 J \le \sqrt{A} \int_{[0, \infty)} \sqrt{B(t)} \, \mu(\dl t) \, ,
\end{equation}
where:
\begin{equation}
\label{eq:gpsi:AB}
\begin{split}
 A &= \E \int_{\marked{\R}^d} \Bigl( g \bigl( \marked{v}_i, \marked{\Po}_{\la \ka}^{(1)} \bigr) \Bigr)^2
  \marked{\Po}_{\la \ka}^{(1)}(\dl \marked{v}_i)
    = \la \int_{\marked{\R}^d} \E \bigl( g(\marked{x}, \Po_{\la \ka}) \bigr)^2 \ka(x) \,\dl \marked{x} \, ,
\\
 B(t) &= \E \int_{\marked{\R}^d} \left[ \int_{(\marked{\R}^d)^{K^\prime}}
  \1 \Bigl( \| v_j - v_i \| \le N(v) \la^{-1/d} t \>\> \text{for all} \>\> j \in K \Bigr)
  \left( \bigotimes_{l=1}^r \bigl( \marked{\Po}_{\la \ka}^{(l)} \bigr)^{\otimes K_l^\prime} \right)
 (\dl \marked{v}_{K^\prime})
 \right]^2
 \marked{\Po}_{\la \ka}^{(1)}(\dl \marked{v}_i) \, .
\end{split}
\end{equation}
To estimate $ B(t) $, let $ K''_1, K''_2, \ldots, K''_r $ be copies of the
sets $ K'_1, K'_2, \ldots, K'_r $, disjoint with $ K $. Put
$ K^\dag := \{ i \} \cup K''_1 \cup K''_2 \cup \cdots \cup K''_r $,
$ \hat K := K \cup K^\dag $ and $ \hat K_l := K_l \cup K''_l $.
Then we may write:
\begin{equation*}
\begin{split}
 B(t) &= \E \int_{(\marked{\R}^d)^{\hat K}}
  \1 \Bigl(
     \| v_j - v_i \| \le N(v_K) \la^{-1/d} t \>\> \text{for all} \>\> j \in K
     \, , \quad
     \| v_l - v_i \| \le N(v_{K^\dag}) \la^{-1/d} t \>\> \text{for all} \>\> l \in K^\dag
 \Bigr) \times \null
\\ & \kern 5em \null \times
 \left(
  \bigotimes_{l=1}^r
  \bigl( \marked{\Po}_{\la \ka}^{(l)} \bigr)^{\otimes \hat K_l}
 \right)(\dl \marked{v}) \, .
\end{split}
\end{equation*}
Noting that $ N(v_K), N(v_{K^\dag}) \le N(v) $ and disintegrating by \eqref{eq:MultiPalm}, we obtain:
\begin{equation*}
\begin{split}
 B(t) &\le \int_{(\marked{\R}^d)^{\hat K}}
  \1 \Bigl( \| v_j - v_i \| \le N(v) \la^{-1/d} t \>\> \text{for all} \>\> j \in \hat K \Bigr)
 \tdl[\la \ka]{\marked{v}_{\hat K_1}}
  \> \cdots \> \tdl[\la \ka]{\marked{v}_{\hat K_r}} \le
\\
 &\le \sum_{L_1, \ldots, L_p \preceq \hat K}
  \int_{(\marked{\R}^d)^{\hat K}}
  \1 \Bigl( \| v_j - v_i \| \le N(v) \la^{-1/d} t \>\> \text{for all} \>\> j \in \hat K \Bigr)
  \bdl[\la \ka]{\marked{v}_{L_1}}
  \> \cdots \> \bdl[\la \ka]{\marked{v}_{L_p}} \le
\\
 &\le \la \sum_{L_1, \ldots, L_p \preceq \hat K} \bigl( (p-1) \| \ka \|_\infty \volB t^d \bigr)^{p-1}
\end{split}
\end{equation*}
(where $ 0^0 := 1 $). Noting that $ (p-1)^{p-1} \le p! \, e^{p-1} $ and $ |\hat K| = 2k - 1 $,
application of Lemma~\ref{partfact} yields:
$$
 B(t) \le \la \, (2k - 1)! \, \bigl( 1 + e \| \ka \|_\infty \volB t^d \bigr)^{2k - 2} \, .
$$
Noting that $ (2k - 1)! \le 4^{k - 1} \, (k!)^2 $ and combining this with \eqref{eq:gpsi:J}
and \eqref{eq:gpsi:AB}, the proof is complete.
\end{proof}

\begin{lemm}
\label{abprod}
Let $ K_1, \ldots, K_r $ be a partition of a non-empty finite set $ K $. Put
$ k_l := |K_l| $ and $ k := |K| $. Take $ j \in K_1 $ and $ \alpha, \beta \ge 0 $.
Suppose that $ \ka $ satisfies Assumption~$\Dens$, that $ R $ satisfies
Assumption~$\MomGrInt(\beta, \ka)$ and that $ \xi $ satisfies either
$\MomGrPt(\alpha, \ka)$ or $\MomGrInt(\alpha, \ka)$. Letting:
$$
 J_{\la, \psi} := \int_{(\marked{\R}^d)^K \setminus \marked{\Delta}_d^K}
  \left( \prod_{l=1}^r \prod_{i \in K_l} a_{\la, i}(\marked{v}_{K_l}) \right)
  b_{\la, j, \psi}(\marked{v}_{K_1}) \, \psi \bigl( \la^{1/d} \delta(v) \bigr)
  \prod_{l=1}^r \tdl[\la \ka] \marked{v}_{K_l} \, ,
$$
there exists a non-increasing function $ \psi \colon [0, \infty) \to [0, \infty) $,
such that for all $ \la \ge \la_0 $,
\begin{align}
\label{eq:abprod:Subs}
 J_{\la, \psi} &\le \la \, C^k (k_1!)^\alpha \, (k_2!)^\alpha \cdots (k_r!)^\alpha \, (k!)^{1 + \beta d}
  && \text{under~$\MomGrPt(\alpha, \ka)$} \, ,
\\
\label{eq:abprod:Int}
 J_{\la, \psi} &\le \la \, C^k (k!)^{1 + \alpha + \beta d}
  &&\text{under~$\MomGrInt(\alpha, \ka)$} \, .
\end{align}
In both estimates, the constant $ C $ and the lower endpoint $ \la_0 $ only depend on
$ \ka $, $ \xi $ and $ R $.
\end{lemm}

\begin{proof}
Let $ \la \ge \la_0 $, where for $ \la_0 $, we take the maximal corresponding
lower endpoint from Assumption~$\MomGrInt(\beta, \ka)$ imposed on $ R $ and
$\MomGrPt(\alpha, \ka)$ or $\MomGrInt(\alpha, \ka)$, whichever imposed on $ \xi $.
If $ \xi $ satisfies Assumption~$\MomGrPt(\alpha, \ka)$, one can estimate, using Jensen's
inequality:
$$
 a_{\la, i}(\marked{v}_{K_l}) \le \Bigl[ \E \bigl|
  \xi_\la(\marked{v}_i, \marked{\Po}_{\la \ka} \cup \marked{V}_{K_l}) \bigr|^{p k_l} \Bigr]^{1/(p k_l)}
  \le A \bigl[ (p k_l)! \bigr]^{\alpha/(p k_l)} \le A p^\alpha \, (k_l!)^{\alpha/k_l} \, ,
$$
where $ p := \max\{ 2, \lceil 1/q \rceil \} $, and where $ A $ and $ q $ are as in
Assumption~$\MomGrPt(\alpha, \ka)$. As a result, we have:
$$
 J_{\la, \psi} \le A^k p^{k \alpha} \left( \prod_{l=1}^k (k_l!)^\alpha \right)
  \int_{(\marked{\R}^d)^K \setminus \marked{\Delta}_d^K}
   \Bigl[ \E \Bigl(
    \psi \bigl( 2 R_\la(\marked{v}_j, \marked{\Po}_{\la \ka} \cup \marked{V}_{K_1}) \bigr)
   \Bigr)^{-2} \Bigr]^{1/2}
   \, \psi \bigl( \la^{1/d} \delta(v) \bigr)
   \prod_{l=1}^r \tdl[\la \ka] \marked{v}_{K_l} \, .
$$
By the Cauchy--Schwarz inequality and Lemma~\ref{gpsi}, we can estimate:
\begin{equation}
\label{eq:abprod:J:Subs}
\begin{split}
 J_{\la, \psi} &\le A^k p^{k \alpha} \left( \prod_{l=1}^k (k_l!)^\alpha \right)
  \left[ \int_{(\marked{\R}^d)^K \setminus \marked{\Delta}_d^K}
   \E \psi \bigl( \la^{1/d} \delta(v) \bigr)
   \prod_{l=1}^r \tdl[\la \ka] \marked{v}_{K_l}
  \right]^{1/2} \times \null
\\ & \kern 3em \null \times
  \left[ \int_{(\marked{\R}^d)^K \setminus \marked{\Delta}_d^K}
   \E \Bigl(
    \psi \bigl( 2 R_\la(\marked{v}_j, \marked{\Po}_{\la \ka} \cup \marked{V}_{K_1}) \bigr)
   \Bigr)^{-2} \,
   \psi \bigl( \la^{1/d} \delta(v) \bigr)
   \prod_{l=1}^r \tdl[\la \ka] \marked{v}_{K_l}
  \right]^{1/2} \le
\\
 &\le \la \, A^k p^{k \alpha} \left( \prod_{l=1}^k (k_l!)^\alpha \right) Q(k, \ka, \psi) \,
  \left[ \int_{\marked{\R}^d}
   \E \Bigl( \psi \bigl( 2 R_\la(\marked{x}, \marked{\Po}_{\la \ka}) \bigr) \Bigr)^{-4} \ka(x) \,\dl \marked{x}
  \right]^{1/4} \, .
\end{split}
\end{equation}
Choosing $ \psi(t) := \bigl( 1 + e \| \ka \|_\infty \omega_d t^d \bigr)^k $, a straightforward
calculation yields $ Q(k, \ka, \psi) = 2^{k-1} k! \, k $. Writing
$ \bigl( \psi(2r) \bigr)^{-4} = \sum_{l=0}^{4k} \binom{4k}{l}
\bigl( 2 e \| \ka \|_\infty \omega_d \bigr)^l r^{ld} $
and recalling that $ R $ satisfies Assumption~$\MomGrInt(\beta, \ka)$, we find that:
\begin{equation*}
\begin{split}
 \int_{\marked{\R}^d}
  \E \Bigl( \psi \bigl( 2 R_\la(\marked{x}, \marked{\Po}_{\la \ka}) \bigr) \Bigr)^{-4} \ka(x) \,\dl \marked{x}
 &\le \sum_{l=0}^{4k} \binom{4k}{l} \bigl( 2 e \| \ka \|_\infty \omega_d B^d \bigr)^l \bigl[ (4ld)! \bigr]^\beta \le
\\
 &\le \bigl( 1 + 2 e \| \ka \|_\infty \omega_d B^d \bigr)^{4k} \bigl[ (4kd)! \bigr]^\beta \le
\\
 &\le (4d)^{4 k \beta d} \bigl( 1 + 2 e \| \ka \|_\infty \omega_d B^d \bigr)^{4k} (k!)^{4 \beta d} \, ,
\end{split}
\end{equation*}
where $ B $ is the constant $ A $ in \eqref{eq:MomGrInt}.
Plugging this into \eqref{eq:abprod:J:Subs} and applying $ k \le 3^{k/3} $, we obtain \eqref{eq:abprod:Subs}.

Now suppose that $ \xi $ satisfies Assumption~$\MomGrInt(\alpha, \ka)$. Then we apply Jensen's and
H\"older's inequality to estimate:
\begin{equation*}
\begin{split}
 J_{\la, \psi} &\le \left[ \prod_{s=1}^r \prod_{i \in K_s}
  \int_{(\marked{\R}^d)^K \setminus \marked{\Delta}_d^K}
   \E \bigl|
    \xi_\la(\marked{v}_i, \marked{\Po}_{\la \ka} \cup \marked{V}_{K_s})
   \bigr|^{2k} \, \psi \bigl( \la^{1/d} \delta(v) \bigr)
   \prod_{l=1}^r \tdl[\la \ka] \marked{v}_{K_l}
 \right]^{1/(2k)} \times \null
\\ & \kern 3em \null \times
 \left[
  \int_{(\marked{\R}^d)^K \setminus \marked{\Delta}_d^K}
   \E \Bigl(
    \psi \bigl( 2 R_\la(\marked{v}_j, \marked{\Po}_{\la \ka} \cup \marked{V}_{K_1}) \bigr)
   \Bigr)^{-2} \, \psi \bigl( \la^{1/d} \delta(v) \bigr)
   \prod_{l=1}^r \tdl[\la \ka] \marked{v}_{K_l}
 \right]^{1/2} \, .
\end{split}
\end{equation*}
Now choose $ \psi $ as before and apply Lemma~\ref{gpsi}. The estimate \eqref{eq:abprod:Int}
follows in more or less the same way that \eqref{eq:abprod:Subs} above.
\end{proof}

Now we are ready to state and prove bounds on semi-cluster measures.
Let $ \Sep(G, H) $ be the set of all $ (G \cup H) $-tuples of points where the
maximum is attained, i.~e.:
\begin{equation}
\label{eq:Sep}
 \Sep(G, H) := \bigl\{ v \in (\R^d)^{G \cup H} \sth \delta(v) = \sepp(V_G, V_H) \bigr\} \, ,
\end{equation}
recalling that $ \delta(v) $ denotes the maximum separation, precisely defined before
Lemma~\ref{gpsi}.

\begin{lemm}
\label{clust}
Let $ G, H $ and $ S, T, K_1, \ldots, K_r $ be two non-trivial
partitions of a finite set $ K $ with $ S \subseteq G $ and
$ T \subseteq H $. Put $ s = |S| $, $ t = |T| $, $ k_l = |K_l| $, $ k = |K| $.
Take $ \alpha, \beta \ge 0 $ and $ f \in \B(\R^d) $. Suppose that
$ \ka $ satisfies Assumption~$\Dens$, that $ R $ satisfies Assumption~$\MomGrInt(\beta, \ka)$,
and that $ \xi $ satisfies either $\MomGrPt(\alpha, \ka)$ or $\MomGrInt(\alpha, \ka)$.
Letting:
$$
 J_\la := \int_{\Sep(G, H)} f^{\otimes K} \,\dl \bigl( \Clu_\la^{S, T}
   \Mom_\la^{K_1} \cdots \Mom_\la^{K_r} \bigr) \, ,
$$
where the product in the right hand side means the usual product of measures
(like in \eqref{eq:CumMomMeas}), we have:
\begin{align}
\label{eq:clust:Subs}
 |J_\la| &\le \la \, C^k \| f \|_\infty^k \bigl( (s + t)! \bigr)^\alpha
  \, (k_1!)^\alpha \, (k_2!)^\alpha \cdots (k_r!)^\alpha \, (k!)^{1 + \beta d}
  && \text{under~$\MomGrPt(\alpha, \ka)$} \, ,
\\
\label{eq:slust:Int}
 |J_\la| &\le \la \, C^k \| f \|_\infty^k (k!)^{1 + \alpha + \beta d}
  &&\text{under~$\MomGrInt(\alpha, \ka)$} \, ,
\end{align}
for all $ \la \ge \la_0 $, where the constant $ C $ and the lower endpoint
$ \la_0 $ only depend on $ \ka $, $ \xi $ and $ R $.
\end{lemm}

\begin{proof}
Applying \eqref{eq:prod1}, write:
\begin{equation}
\label{eq:clust:J}
 J_\la = \int_{(\marked{\R^d})^K} \1(v \in \Sep(G, H)) \,
   f^{\otimes K}(v) \, D_\la(\marked{v}) \, \tilde{\tilde\dl} \marked{v} \, ,
\end{equation}
where:
\begin{equation*}
\begin{split}
 D_\la(\marked{v}) &= \bigl[ \mom_\la(\marked{v}_{S \cup T})
   - \mom_\la(\marked{v}_S) \, \mom_\la(\marked{v}_T) \bigr]
  \prod_{l=1}^r \mom_\la(\marked{v}_{K_l}) \, ,
\\
 \tilde{\tilde\dl} \marked{v} &= \tdl[\la \ka]{\marked{v}_{S \cup T}}
  \, \tdl[\la \ka]{\marked{v}_{K_1}} \cdots \tdl[\la \ka]{\marked{v}_{K_r}} =
  \tdl[\la \ka]{\marked{v}_S} \, 
  \tdl[\la \ka]{\marked{v}_T} \, \tdl[\la \ka]{\marked{v}_{K_1}} \cdots \tdl[\la \ka]{\marked{v}_{K_r}} \, .
\end{split}
\end{equation*}
Observe that since $ \sepp(V_S, V_T) \ge \sepp(V_G, V_H) = \delta(v) > 0 $ for all
$ v \in \Sep(G, H) $, the product differential $ \tdl[\la \ka]{\marked{v}_S} \, \tdl[\la \ka]{\marked{v}_T} $
coincides with $ \tdl[\la \ka]{\marked{v}_{S \cup T}} $.

By Lemma~\ref{L52refined} and the fact that
$ \sepp(V_S, V_T) \ge \delta(v) $ for $ v \in \Sep(G, H) $, the quantity\newline
$ \1(v \in \Sep(G, H)) |D_\la(\marked{v})| $ can be bounded
by a sum of $ 2(s+t) $ terms of the form
$ \left( \prod_{i \in K} \, \tilde a_i \right) \tilde b_j
\psi \bigl( \la^{1/d} \delta(v) \bigr) $, where
either $ \tilde a_i = a_{\la, i}(\marked{v}_S) $
    or $ \tilde a_i = a_{\la, i}(\marked{v}_T) $
    or $ \tilde a_i = a_{\la, i}(\marked{v}_{S \cup T}) $
    or $ \tilde a_i = a_{\la, i}(\marked{v_{K_l}}) $
for some $ l = 1, \ldots, r $, and where
either $ \tilde b_j = b_{\la, j, \psi}(\marked{v}_S) $
    or $ \tilde b_j = b_{\la, j, \psi}(\marked{v}_T) $.
Bounding those terms by Lemma~\ref{abprod} and applying $ 2(s+t) \le 2k \le 2^k $,
the result follows.
\end{proof}

\begin{proof}[Proof of Lemma~\ref{growthlemm}]
Put $ K = \{ 1, \ldots, k \} $ and write:
\begin{equation}
\label{eq:growthlemm:Sep}
 \langle f^{\otimes k}, \Cum_\la^k \rangle = \int_{(\R^d)^k} f^{\otimes k} \,\dl \Cum_\la^k
  = \int_{\Delta_d^K} f^{\otimes k} \,\dl \Cum_\la^K
   + \sum_{G, H \preceq K} \int_{\Sep(G,H)} f^{\otimes K} \,\dl \Cum_\la^K \, ,
\end{equation}
where the sum ranges over all unordered non-trivial partitions of $ K $ into two sets.
For the first term, we directly apply \eqref{eq:CumMomMeas}:
$$
 \int_{\Delta_d^K} f^{\otimes K} \,\dl \Cum_\la^K =
  \sum_{L_1, \ldots, L_p \preceq K}
  (-1)^{p-1} (p-1)! \int_{\Delta_d^K} f^{\otimes K}
  \,\dl \bigl( \Mom_\la^{L_1} \> \cdots \> \Mom_\la^{L_p} \bigr) \, .
$$
However, only the partition into one single set gives a non-zero integral.
Therefore,
$$
 \int_{\Delta_d^K} f^{\otimes K} \,\dl \Cum_\la^K =
 \int_{\Delta_d^K} f^{\otimes K} \,\dl \Mom_\la^K =
 \int_{\marked{\Delta}_d^K} f^{\otimes K}(v) \, \mom_\la(\marked{v}) \tdl[\la \ka] \marked{v}
$$
by \eqref{eq:prod1}. On the diagonal, $ \tdl[\la \ka] \marked{v} $ reduces to
$ \bdl[\la \ka] \marked{v} $, so that:
$$ 
 \int_{\Delta_d^K} f^{\otimes K} \,\dl \Cum_\la^K =
  \la \int_{\marked{\R}^d} \bigl( f(x) \bigr)^k \E \bigl[ \xi_\la(\marked{x},
   \marked{\Po}_{\la \ka}) \bigr]^k \ka(x) \,\dl \marked{x} \, .
$$
Now recall that Assumptions~$\Gen(\gamma, \ka)$ include that $ \xi $ satisfies either
Assumption~$\MomGrPt(\alpha, \ka)$ or the weaker Assumption~$\MomGrInt(\alpha, \ka)$
for some $ \alpha \le \gamma $. The latter one implies:
\begin{equation}
\label{eq:growthlemm:Diag}
 \left| \int_{\Delta_d^K} f^{\otimes K} \,\dl \Cum_\la^K \right| \le \la \, A^k \| f \|^k_\infty (k!)^\alpha
  \le \la \, A^k \| f \|^k_\infty (k!)^\gamma \, .
\end{equation}
Now we turn to the rest of the terms. We shall combine Lemmas~\ref{CumClust} and \ref{clust},
followed by Corollary~\ref{partfactalpha}. If $ \xi $ satisfies Assumption~$\MomGrPt(\alpha, \ka)$,
we can estimate:
\begin{equation*}
\begin{split}
 \left| \int_{\Sep(G,H)} f^{\otimes K} \,\dl \Cum_\la^K \right|
  &\le \la \, C_1^k \| f \|^k_\infty \sum_{L_1, \ldots, L_p \preceq K} p! \,
   (|L_1|!)^\alpha (|L_2|!)^\alpha \cdots (|L_p|!)^\alpha (k!)^{1 + \beta d} \le
\\
  &\le \la \, 2^{k-1} C_1^k \| f \|^k_\infty (k!)^{1 + \max \{ \alpha, 1 \} + \beta d} =
\\
  &= \la \, 2^{k-1} \, C_1^k \| f \|^k_\infty (k!)^{1 + \gamma} \, ,
\end{split}
\end{equation*}
where $ C_1 $ is the constant $ C $ from Lemma~\ref{clust}. Similarly, if $ \xi $
satisfies Assumption~$\MomGrInt(\alpha, \ka)$, we estimate:
\begin{equation*}
\begin{split}
 \left| \int_{\Sep(G,H)} f^{\otimes K} \,\dl \Cum_\la^K \right|
  &\le \la \, C_1^k \| f \|^k_\infty \sum_{L_1, \ldots, L_p \preceq K} p! \,
   (k!)^{2 + \alpha + \beta d} \le
\\
  &\le \la \, 2^{k-1} C_1^k \| f \|^k_\infty (k!)^{2 + \alpha + \beta d} =
\\
  &= \la \, 2^{k-1} \, C_1^k \| f \|^k_\infty (k!)^{1 + \gamma} \, .
\end{split}
\end{equation*}
Plugging the latter bounds along with
\eqref{eq:growthlemm:Diag} into \eqref{eq:growthlemm:Sep}, we find that:
$$
 \bigl| \langle f^{\otimes k}, \Cum_\la^k \rangle \bigr|
  \le \la \bigl( 2 \max \{ A, 2 C_1 \} \bigr)^k  \| f \|^k_\infty
  (k!)^{1 + \gamma} \, .
$$
This completes the proof.
\end{proof}

\subsection{Proof of bounds on deviation probabilities (Theorem~\ref{probabilities})}

As mentioned in Subsection~\ref{ssc:DevProb}, the proof of the result will be based
on the estimation of the cumulants, applying the celebrated lemma of Rudzkis, Saulis
and Statulevi\v{c}ius \cite{RSS}. Consider a general random variable $Y$ with finite
absolute moments of all orders and recall that $ \cu^k(Y) $ stands for the $k$-th cumulant
of $Y$. Below we state a simplified form of the version of that lemma which appears as
Lemma~2.3 on p.~18 of \cite{SS}:
\begin{lemm}
\label{RSSLemma}
Let $ Y $ be a random variable as above, with $ \E Y = 0 $ and $ \Var(Y) = 1 $, and
with its cumulants satisfying:
\begin{equation}
\label{eq:cumulant}
 |\cu^k(Y)| \leq \frac{(k!)^{1 + \gamma}}{\Delta^{k-2}}, \quad k = 3, 4, \ldots
\end{equation} 
for some $ \gamma \ge 0 $ and $ \Delta > 0 $. Then the large deviation relations:
\begin{align}
\label{eq:RSS+}
 \frac{\P(Y \geq y)}{1-\Phi(y)}
  &= \exp \bigl( L_\gamma(y) \bigr) \left( 1 + \theta_1 \psi(y) \frac{y+1}{\Delta_\gamma} \right) \, ,
\\
\label{eq:RSS-}
 \frac{\P(Y \leq - y)}{\Phi(-y)}
  &= \exp \bigl( L_\gamma(-y) \bigr) \left( 1 + \theta_2 \psi(y) \frac{y+1}{\Delta_\gamma} \right)
\end{align}
hold in the interval $ 0 \le y < \Delta_\gamma $. Here:
\begin{align}
\label{eq:Deltagamma}
 \Delta_\gamma &= \frac{1}{6} \left( \frac{\sqrt 2}{6} \, \Delta \right)^{1/(1 + 2 \gamma)} \, ,
\\
\label{eq:RSSf}
 \psi(y) &= \frac{60 \bigl[ 1 + 10 \Delta_\gamma^2
  \exp \bigl( - (1 - y/\Delta_\gamma) \sqrt{\Delta_\gamma} \bigr) \bigr]}{1 - y/\Delta_\gamma} \, ,
\end{align}
the quantities $ \theta_1 $ and $ \theta_2 $ belong to $ [-1, 1] $ and the function $ L_\gamma(y) $,
which is closely related to the Cram\'er--Petrov series, satisfies:
\begin{equation}
\label{eq:Lgamma}
 \bigl| L_\gamma(y) \bigr| \le \frac{|y|^3}{3 \Delta_\gamma}
\end{equation}
for all y with $ |y| \le \Delta_\gamma $.
\end{lemm}

The following weaker form of the preceding result will be used to prove the first part
of Theorem~\ref{probabilities}:

\begin{coro}
\label{RSSCoro}
Under the conditions of Lemma~\ref{RSSLemma}, there exist constants $ C_0 $, $ C_1 $
and $ C_2 $ depending only on $ \gamma $, such that for $ \Delta \geq C_0 $
and $ 0 \leq y \leq C_1 \Delta^{1/(1 + 2 \gamma)} $, we can estimate:
\begin{align}
\label{eq:RSSWeak+}
 \left| \log \frac{\P(Y \geq y)}{1-\Phi(y)} \right|
  &\leq C_2 \, \frac{1 + y^3}{\Delta^{1/(1 + 2 \gamma)}} \, ,
\\
\label{eq:RSSWeak-}
   \left| \log \frac{\P(Y \leq -y)}{\Phi(-y)} \right|
  &\leq C_2 \, \frac{1 + y^3}{\Delta^{1/(1 + 2 \gamma)}} \, .
\end{align}
\end{coro}

\begin{proof}
The key observation is that $ \psi(y) $ from \eqref{eq:RSSf} is bounded for $ 0 \le y \le q \Delta_\gamma $,
where $ q \in [0, 1) $ is fixed. Indeed, for such $ y $, one can estimate
$ \psi(y) \le c_1 + c_2 \Delta_\gamma^2 \exp \bigl( - c_3 \sqrt{\Delta_\gamma} \bigr) $,
where $ c_1 $, $ c_2 $ and $ c_3 $ depend only on $ q $. But the right-hand side of
the last estimate can be bounded uniformly in $ \Delta_\gamma $.

Boundedness of $ \psi $ along with \eqref{eq:RSS+}, \eqref{eq:Deltagamma} and
\eqref{eq:Lgamma} implies that there exist universal constants $ D_1 $, $ D_2 $
and $ D_3 $, such that:
\begin{equation}
\label{eq:RSSWeakProof1}
 \exp \left( - \frac{D_2 y^3}{\Delta^{1/(1 + 2 \gamma)}} \right)
  \left( 1 - \frac{D_3(1 + y)}{\Delta^{1/(1 + 2 \gamma)}} \right)
 \leq \frac{\P(Y \geq y)}{1-\Phi(y)} \leq
  \exp \left( \frac{D_2 y^3}{\Delta^{1/(1 + 2 \gamma)}} \right)
  \left( 1 + \frac{D_3(1 + y)}{\Delta^{1/(1 + 2 \gamma)}} \right)
\end{equation}
for all $ 0 \leq y \leq D_1 \Delta^{1/(1 + 2 \gamma)} $.

Now take $ \Delta \ge (3 D_3)^{1 + 2 \gamma} $ and
$ 0 \le y \le \Delta^{1/(1 + 2 \gamma)}/(3 D_3) $,
so that $ D_3 (1 + y){\Delta^{-1/(1 + 2 \gamma)}} \le 2/3 $. By convexity of
the logarithmic functions including convexity, we have:
\begin{equation}
\label{eq:RSSWeakProof2}
 - \log \left( 1 - \frac{D_3(1 + y)}{\Delta^{1/(1 + 2 \gamma)}} \right)
  \leq \frac{3 \log 3}{2} \, \frac{D_3(1 + y)}{\Delta^{1/(1 + 2 \gamma)}}
 \, .
\end{equation}
An easy exercise shows that $ y \le (2 + y^3)/3 $, so that:
\begin{equation}
\label{eq:RSSWeakProof3}
  \log \left( 1 + \frac{D_3(1 + y)}{\Delta^{1/(1 + 2 \gamma)}} \right)
  \leq - \log \left( 1 - \frac{D_3(1 + y)}{\Delta^{1/(1 + 2 \gamma)}} \right)
  \leq \frac{\log 3}{2} \, \frac{D_3(5 + y^3)}{\Delta^{1/(1 + 2 \gamma)}} \, .
\end{equation}
The estimate~\eqref{eq:RSSWeak+} now follows from \eqref{eq:RSSWeakProof1} and
\eqref{eq:RSSWeakProof3}. Similarly, we obtain \eqref{eq:RSSWeak-} and the proof
is complete.
\end{proof}

For the second part of Theorem~\ref{probabilities}, we shall need another
result, which is due to Bentkus and Rudzkis \cite{BR} and appears as Lemma~2.4
on page~19 of \cite{SS}. Like Lemma~\ref{RSSLemma},
we state it in a simplified form, which appears as a corollary of the
afore-mentioned result.

\begin{lemm}
\label{BRLemma}
Let $ Y $ be a random variable with $ \E Y = 0 $ and with its cumulants
satisfying:
\begin{equation}
\label{eq:BRLemmaCond}
 |\cu^k(Y)| \le \left( \frac{k!}{2} \right)^{1 + \gamma} \frac{H}{\Delta^{k-2}} 
\end{equation}
for some $ \gamma \ge 0 $, $ H > 0 $ and $ \Delta > 0 $. Then for all
$ y \ge 0 $, we have:
$$
 \P(Y \ge y) \le \exp \left( - \frac 1 4 \min \left\{ \frac{y^2}{H},
  (\Delta y)^{1/(1 + \gamma)} \right\} \right) \, .
$$
\end{lemm}

\begin{proof}[Proof of Theorem~\ref{probabilities}]\leavevmode\par
\textit{(1)}: Applying Lemma~\ref{growthlemm} along with \eqref{eq:Varlim}
and \eqref{eq:CumMeasCum} and recalling that $ \sigmaxikainf[f] > 0 $,
we find that for $ \la $ large enough and $k \geq 3$,
the cumulants of $ \langle f, \barmuxilaka \rangle $, i.~e.,
the cumulants of $ \langle f, \muxilaka \rangle $, i.~e.,
$ \langle f^{\otimes k}, \Cum_\la^k \rangle $, satisfy:
\begin{equation}
\label{eq:CumBd1}
 \frac{\bigl| \cu^k \bigl( \langle f, \barmuxilaka \rangle \bigr) \bigr|}%
  {\sigmaxilaka^k[f]}
 \leq \frac{C_1^k}{\la^{(k-2)/2}} \, \frac{\| f \|_\infty^k}{\sigmaxikainf^k [f]} \, (k!)^{1+\gamma}
\end{equation}
for some constant $ C_1 \ge 0 $ depending only on $ \ka $, $ \xi $ and $ R $, where
we recall from Subsection~\ref{ssc:KnRe}
that $ \sigmaxilaka^2[f] $ denotes the variance of 
$ \langle f, \barmuxilaka \rangle $ (and $ \sigmaxilaka^k[f] $ its
$ (k/2) $-th power; similarly, $ \sigmaxikainf^k[f] = (\sigmaxikainf[f])^k $).
To apply Corollary~\ref{RSSCoro}, rewrite the right-hand side of \eqref{eq:CumBd1} as:
$$
 C_1^2 \, \frac{\| f \|_\infty^2}{\sigmaxikainf^2[f]}
  \left( \frac{C_1}{\sqrt{\la}} \, \frac{\| f \|_\infty}{\sigmaxikainf[f]} \right)^{k-2}
  (k!)^{1 + \gamma}
 \le
 \left(
  \max \left\{ 1, \> C_1^2 \, \frac{\| f \|_\infty^2}{\sigmaxikainf^2[f]} \right\}
  \frac{C_1}{\sqrt{\la}} \, \frac{\| f \|_\infty}{\sigmaxikainf[f]}
 \right)^{k-2}
  (k!)^{1 + \gamma}
\, .
$$
Thus, recalling that the first cumulant of the centered measure $ \barmuxilaka $ is
zero whereas its higher order cumulants coincide with those of $ \muxilaka $, we can apply
Corollary~\ref{RSSCoro} to
$ Y := \langle f, \barmuxilaka \rangle/\sigmaxilaka[f] $
with $ y = x/\sigmaxilaka[f] $
and with $ \Delta $ taken to be $ \sqrt{\la} $
multiplied by some constant depending only on $ \ka $, $ \xi $, $ R $ and
the ratio $ \| f \|_\infty/\sigmaxikainf[f] $. It follows that there exist constants
$ \la_1, D_1, D_2 \ge 0 $,
such that for all $ \la \geq \la_1 $ and all $ 0 \le x \le D_1 \, \sigmaxilaka[f] \la^{1/(2+4\gamma)} $,
$$
 \left| \log \frac{\P\bigl( \langle f, \barmuxilaka \rangle \geq x \bigr)}%
  {1-\Phi(x/\sigmaxilaka[f])} \right|
 \leq \frac{D_2}{\la^{1/(2+4\gamma)}} \left[ 1 + \left( \frac{x}{\sigmaxilaka[f]} \right)^3 \right] \, .
$$
Applying \eqref{eq:Varlim} once again, we obtain that there exist constants
$ \la_2, D_3, D_4 \ge 0 $ such that for all $ \la \geq \la_2 $ and all
$ 0 \le x \le D_3 \, \la^{(1+\gamma)/(1+2\gamma)} $,
$$
 \left| \log \frac{\P\bigl( \langle f, \barmuxilaka \rangle \geq x \bigr)}%
  {1-\Phi(x/\sigmaxilaka[f])} \right|
 \leq D_4 \left[ \frac{1}{\la^{1/(2+4\gamma)}} + \frac{x^3}{\la^{(2+3\gamma)/(1+2\gamma)}} \right] \, .
$$
An analogous bound holds for the lower tail probabilities and part $(1)$
follows.

\medskip
\textit{(2)}:
Taking $ C $ from Lemma~\ref{growthlemm}, and letting $ D_5 := 2^{\gamma+1} C^2 \| f \|_\infty^2 $ and
$ D_6 := 1/(C \| f \|_\infty) $, we deduce that the random variable
$ Y := \langle f, \barmuxilaka \rangle $ satisfies \eqref{eq:BRLemmaCond} with:
\begin{equation}
\label{eq:BRlambda}
 H = D_5 \la \quad \text{and} \quad \Delta = D_6 \, .
\end{equation}
Next, if $ \sigmaxilaka^2[f] \le D_5 \la $,
then $ Y $ also satisfies \eqref{eq:BRLemmaCond} with:
\begin{equation}
\label{eq:BRsigma}
 H = \sigmaxilaka^2[f] \quad \text{and} \quad
 \Delta = D_6 \, \frac{\sigmaxilaka^2[f]}{D_5 \la} \, .
\end{equation}
Now take arbitrary $ 0 \le t \le 1 $. Combining \eqref{eq:BRlambda} and \eqref{eq:BRsigma}
with the inequality $ \min \{ a, b \} \le a^{1-t} b^{t} $, we find that
\eqref{eq:BRLemmaCond} is also satisfied with:
$$
 H = \sigmaxilaka^{2t}[f] \, (D_5 \la)^{1-t} \quad \text{and} \quad
 \Delta = D_6 \left( \frac{\sigmaxilaka^2[f]}{D_5 \la} \right)^t \, .
$$
Plugging this into Lemma~\ref{BRLemma}, we obtain:
\begin{equation}
\label{eq:BRLemmaProof}
 \P \bigl( \pm \langle f, \barmuxilaka \rangle \geq  x \bigr)
  \le \min_{0 \le t \le 1} \exp \left[ - \frac 1 4 \min \left\{ \frac{x^2}{\sigmaxilaka^{2t}[f]
   \, (D_5 \la)^{1-t}}, \>
  (D_6 x)^{1/(1+\gamma)} \left( \frac{\sigmaxilaka^2[f]}{D_5 \la} \right)^{t/(1+\gamma)} \right\} \right] \, .
\end{equation}
An easy exercise in optimization shows that for all $ a_1, a_2 > 0 $, $ b \ge 1 $
and $ c_1, c_2 \ge 0 $, we have
\hfill\break
$ \max_{0 \le t \le 1} \min \bigl\{ a_1 b^{c_1 t}, a_2 b^{-c_2 t} \bigr\}
= \min \bigl\{ a_1 b^{c_1}, a_2, a_1^{c_2/(c_1 + c_2)} a_2^{c_1/(c_1 + c_2)}
\bigr\} $. Plugging this into \eqref{eq:BRLemmaProof}, we obtain \eqref{eq:large0}.
\end{proof}

\subsection{Proof of moderate deviations}

The results of Subsection~\ref{ssc:MDP} will follow from the following consequence
of Theorem~\ref{probabilities}.

\begin{lemm}
\label{LimDevProb}
Let $ \barmuxilaka $ be defined as in Section~\ref{sc:Intr} with
$ \xi $ satisfying Assumptions~$\Gen(\gamma, \ka)$ and $\ConvergVar(\ka)$,
let $ a_\lambda $ satisfy \eqref{eq:assal} and take $ f \in \B(\R^d) $. Then, recalling
\eqref{eq:Varlim}, for $ \sigmaxika[f] > 0 $ and $ t \ge 0 $, we have:
\begin{equation}
\label{eq:LimDevProb+}
 \lim_{\la \to \infty} \frac{1}{a_{\la}^2} \log \P \bigl( a_{\la}^{-1} \la^{-1/2} 
  \langle f, \barmuxilaka \rangle \geq t \bigr)
 = \lim_{\la \to \infty} \frac{1}{a_{\la}^2} \log \P \bigl( a_{\la}^{-1} \la^{-1/2} 
  \langle f, \barmuxilaka \rangle > t \bigr)
 = - \frac{t^2}{2 \sigmaxika^2[f]} \, ,
\end{equation}
and for $ \sigmaxika[f] = 0 $ and $ t > 0 $, we have:
\begin{equation}
\label{eq:LimDevProb0}
 \lim_{\la \to \infty} \frac{1}{a_{\la}^2} \log \P \bigl( a_{\la}^{-1} \la^{-1/2} 
  \langle f, \barmuxilaka \rangle \geq t \bigr)
 = \lim_{\la \to \infty} \frac{1}{a_{\la}^2} \log \P \bigl( a_{\la}^{-1} \la^{-1/2} 
  \langle f, \barmuxilaka \rangle > t \bigr)
 = - \infty \, .
\end{equation}
\end{lemm}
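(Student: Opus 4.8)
The plan is to derive both \eqref{eq:LimDevProb+} and \eqref{eq:LimDevProb0} directly from Theorem~\ref{probabilities} by choosing, for a fixed level $ t\ge 0 $, the deviation threshold $ x_\la := t\,\alpha_\la\,\la^{1/2} $, so that $ \{\alpha_\la^{-1}\la^{-1/2}\langle f,\overline\mu_{\la\ka}^\xi\rangle\ge t\}=\{\langle f,\overline\mu_{\la\ka}^\xi\rangle\ge x_\la\} $, and then letting $ \la\to\infty $. The passage from the closed to the open half-line will be routine: since for every $ t'>t $
$$
 \P\bigl(\alpha_\la^{-1}\la^{-1/2}\langle f,\overline\mu_{\la\ka}^\xi\rangle\ge t'\bigr)
  \le \P\bigl(\alpha_\la^{-1}\la^{-1/2}\langle f,\overline\mu_{\la\ka}^\xi\rangle> t\bigr)
  \le \P\bigl(\alpha_\la^{-1}\la^{-1/2}\langle f,\overline\mu_{\la\ka}^\xi\rangle\ge t\bigr) \, ,
$$
once the ``$\ge$'' version of the statement is established for every level, the ``$>$'' version follows by sending $ t'\downarrow t $ and using continuity of $ t\mapsto t^2/(2Q^\xi_\ka(f)) $ (in the degenerate case the right-hand side is simply $ -\infty $, so monotonicity alone suffices). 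It then remains to treat the two cases.

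\emph{Case $ Q^\xi_\ka(f)>0 $.} Here I would apply part~$(1)$ of Theorem~\ref{probabilities}. Since $ (4+2d)/(6+4d)-1/2=1/(6+4d) $, the admissibility requirement $ x_\la\le C_1\la^{(4+2d)/(6+4d)} $ reads $ t\,\alpha_\la\le C_1\la^{1/(6+4d)} $, which holds for all large $ \la $ by \eqref{assal}. Theorem~\ref{probabilities}$(1)$ then gives
$$
 \log\P\bigl(\langle f,\overline\mu_{\la\ka}^\xi\rangle\ge x_\la\bigr)
  = \log\bigl(1-\Phi(x_\la/\sigma_\la[f])\bigr)
   + O\!\left(\frac{1}{\la^{1/(6+4d)}}+\frac{x_\la^3}{\la^{(10+6d)/(6+4d)}}\right) .
$$
By \eqref{Varlim} one has $ \sigma_\la^2[f]/\la\to Q^\xi_\ka(f) $, hence $ x_\la/\sigma_\la[f]=t\alpha_\la/\sqrt{Q^\xi_\ka(f)}\,(1+o(1))\to\infty $; inserting this into the classical Gaussian tail asymptotics $ \log(1-\Phi(y))=-\tfrac{y^2}{2}(1+o(1)) $ as $ y\to\infty $ and dividing by $ \alpha_\la^2 $ produces the claimed limit $ -t^2/(2Q^\xi_\ka(f)) $. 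To close the argument I must check that the remainder divided by $ \alpha_\la^2 $ vanishes: the first part is $ \alpha_\la^{-2}\la^{-1/(6+4d)}\to 0 $ because $ \alpha_\la\to\infty $, and, since $ x_\la^3=t^3\alpha_\la^3\la^{3/2} $ and $ 3/2-(10+6d)/(6+4d)=-1/(6+4d) $, the second part equals $ t^3\,\alpha_\la\,\la^{-1/(6+4d)}\to 0 $ by \eqref{assal}. The level $ t=0 $ is immediate, since $ \P(\langle f,\overline\mu_{\la\ka}^\xi\rangle\ge 0)\to 1-\Phi(0)=\tfrac12 $ by the same estimate with $ x=0 $, so $ \alpha_\la^{-2}\log\P(\cdots\ge 0)\to 0 $.

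\emph{Case $ Q^\xi_\ka(f)=0 $.} Now \eqref{Varlim} yields $ \sigma_\la^2[f]=o(\la) $, so $ \sigma_\la^2[f]\le C_3^2\la $ for large $ \la $ and part~$(2)$ of Theorem~\ref{probabilities} applies at $ x=x_\la $:
$$
 \P\bigl(\langle f,\overline\mu_{\la\ka}^\xi\rangle\ge x_\la\bigr)
  \le \exp\!\left(-\min\!\Bigl\{C_4\tfrac{x_\la^2}{\sigma_\la^2[f]},\,C_5\,x_\la^{1/(2+d)},\,C_6\bigl(\tfrac{x_\la^3}{\la}\bigr)^{1/(3+d)}\Bigr\}\right) .
$$
I then have to verify that, for fixed $ t>0 $, each of the three quantities in the minimum divided by $ \alpha_\la^2 $ tends to $ +\infty $. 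The first is $ C_4 t^2\la/\sigma_\la^2[f]\to\infty $. For the other two I would invoke the exponent identities $ \bigl(2-\tfrac{1}{2+d}\bigr)\tfrac{1}{6+4d}=\tfrac{1}{2(2+d)} $ and $ \bigl(2-\tfrac{3}{3+d}\bigr)\tfrac{1}{6+4d}=\tfrac{1}{2(3+d)} $: writing $ x_\la^{1/(2+d)}/\alpha_\la^2=t^{1/(2+d)}\la^{1/(2(2+d))}/\alpha_\la^{2-1/(2+d)} $ and $ (x_\la^3/\la)^{1/(3+d)}/\alpha_\la^2=t^{3/(3+d)}\la^{1/(2(3+d))}/\alpha_\la^{2-3/(3+d)} $, the bound $ \alpha_\la=o(\la^{1/(6+4d)}) $ from \eqref{assal}, raised to the (positive) powers $ 2-\tfrac{1}{2+d} $ and $ 2-\tfrac{3}{3+d} $ respectively, combined with those identities gives $ \alpha_\la^{2-1/(2+d)}=o(\la^{1/(2(2+d))}) $ and $ \alpha_\la^{2-3/(3+d)}=o(\la^{1/(2(3+d))}) $, so both ratios diverge. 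Hence $ \alpha_\la^{-2}\log\P(\langle f,\overline\mu_{\la\ka}^\xi\rangle\ge x_\la)\to-\infty $, which is the ``$\ge$'' version of \eqref{eq:LimDevProb0}.

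\emph{Main obstacle.} Invoking Theorem~\ref{probabilities} and the Gaussian tail asymptotics is mechanical; the genuine content is the exponent bookkeeping. The scale restriction \eqref{assal}, with its specific exponent $ 1/(6+4d) $, is exactly tuned so that the cumulant-expansion remainder in part~$(1)$ becomes negligible after renormalising by $ \alpha_\la^2 $ and so that, in the degenerate regime, the two geometric rate terms $ C_5x^{1/(2+d)} $ and $ C_6(x^3/\la)^{1/(3+d)} $ of part~$(2)$ still outgrow $ \alpha_\la^2 $; both facts rest on the borderline identities displayed above, in which no slack is available. I expect the only real care needed in writing out the proof to lie in handling this bookkeeping correctly --- in particular ensuring the $ o(\cdot) $'s coming from \eqref{assal} are applied to \emph{positive} powers of $ \la^{1/(6+4d)} $ --- rather than in any conceptual difficulty.
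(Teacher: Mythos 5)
Your proposal is correct and follows essentially the same route as the paper: plug $ x = t\,\alpha_\la\,\la^{1/2} $ into parts $(1)$ and $(2)$ of Theorem~\ref{probabilities}, control the Gaussian tail, verify the exponent bookkeeping via \eqref{assal} and \eqref{Varlim}, and pass from $ \ge $ to $ > $ by monotonicity and continuity of the rate function. The only cosmetic difference is that you invoke the asymptotic $ \log(1-\Phi(y))\sim -y^2/2 $ whereas the paper uses the explicit two-sided Mills-ratio bound $ (2+\sqrt{2\pi}\,y)^{-1}\le e^{y^2/2}(1-\Phi(y))\le \tfrac12 $, which amounts to the same thing at this level of precision.
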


\begin{proof}
Suppose first that $ \sigmaxika[f] > 0 $. In this case, we plug
$ x = t \, a_{\la} \, \la^{1/2} $ into \eqref{eq:large+}
and make use of the fact that for all $y \geq 0$,
$$
 \frac{1}{2 + \sqrt{2 \pi} \, y} \leq e^{y^2/2} \bigl( 1 - \Phi(y) \bigr) \leq \frac 1 2 \, .
$$
Combining both, we obtain the bound:
$$
 \bigg| \log \P \bigl( a_{\la}^{-1} \la^{-1/2}
 \langle f, \barmuxilaka \rangle \geq t \bigr) +
  \frac{a_{\la}^2 \la t^2}{2 \sigmaxilaka^2[f]} \biggr| 
 \leq \log \left( 2 + \frac{\sqrt{2 \pi} \, t \, a_{\la} \la^{1/2}}%
  {\sigmaxilaka[f]} \right)
  + C_2 \, \frac{1 + a_{\la}^3 t^3}{\la^{1/(2+2\gamma)}} \, .
$$
Dividing by $ a_\la^2 $, making use of \eqref{eq:Varlim} and applying
condition~\eqref{eq:assal}, this implies \eqref{eq:LimDevProb+} for `greater or equal'.
The corresponding result for the strict inequality follows by continuity.

In the case where $ \sigmaxika[f] = 0 $, plug $ x = t \, a_{\la} \, \la^{1/2} $
into \eqref{eq:large0} to obtain:
$$
 \frac{1}{a_\la^2} \log \P \bigl( a_{\la}^{-1} \la^{-1/2}
   \langle f, \barmuxilaka \rangle \geq t \bigr)
  \le - t^2 \min \left\{ C_4 \, \frac{\la}{\sigmaxilaka^2[f]}, \>
   C_5 \left( \frac{\la}{(t \, a_\la)^{2+4\gamma}} \right)^{{\textstyle \frac{1}{2+2\gamma}}}, \>
   C_6 \left( \frac{\la}{(t \, a_\la)^{2+4\gamma}} \right)^{{\textstyle \frac{1}{4+2\gamma}}}
  \right\}
$$
and the desired limiting behavior follows again from \eqref{eq:Varlim} and \eqref{eq:assal}.
This completes the proof.
\end{proof}

\begin{proof}[Proof of Theorem~\ref{MDPTWI}]
We apply the preceding lemma along with Theorem~4.1.11 of \cite{DZ}, which allows us
to derive a LDP from the limiting behavior of probabilities for a basis of topology.
For the latter, we choose all open intervals $ (u_1, u_2) $, where at least one of the
endpoints is finite and where none of the endpoints lies at the origin. Denote the family
of all such intervals by $ \mathcal U $. From Lemma~\ref{LimDevProb}, it follows that
for each $ U = (u_1, u_2) \in \mathcal U $,
$$
 \mathcal L_U := - \lim_{\la \to \infty} \frac{1}{a_\la^2}
  \log \P \bigl( a_{\la}^{-1} \la^{-1/2}
   \langle f, \barmuxilaka \rangle \in U \bigr) =
 \left\{ \begin{array}{cl}
  u_2^2/ \bigl( 2 \sigmaxika^2[f] \bigr) & ; \quad u_1 < u_2 < 0
 \\
  0 & ; \quad u_1 < 0 < u_2
 \\
  u_1^2/ \bigl( 2 \sigmaxika^2[f] \bigr) & ; \quad 0 < u_1 < u_2
 \end{array} \right. \, ,
$$
for all $ U \in \mathcal U $, recalling our convention on division by zero
from the end of Subsection~\ref{ssc:terminology}. By Theorem~4.1.11 of \cite{DZ},
the random variables $ a_{\la}^{-1} \la^{-1/2} \langle f,
\barmuxilaka \rangle $ satisfy a \emph{weak} LDP (MDP)
as $ \la \to \infty $ with speed $ a_\la^2 $ and rate function:
$$
 t \mapsto \sup_{\substack{U \in \mathcal U \\ t \in U}} \mathcal L_U =
  \frac{t^2}{2 \sigmaxika^2[f]} \, ,
$$
which matches the function $ \Ixika{f} $ from \eqref{eq:MDPRF}. Here, weak
LDP means that the lower bound in \eqref{eq:DefLDP} holds for all measurable sets
$ \Gamma $, while the upper bound holds for all relatively compact measurable
$ \Gamma $. However, from Lemma~\ref{LimDevProb}, it follows that the family
$ a_{\la}^{-1} \la^{-1/2} \langle f, \barmuxilaka \rangle $
is exponentially tight for speed $ a_\la^2 $, i.~e., for each $ M < \infty $,
there exists a measurable relatively compact set $ K $, such that
$ \limsup_{\la \to \infty} a_\la^{-2} \log \P \bigl( a_{\la}^{-1} \la^{-1/2}
\langle f, \barmuxilaka \rangle \notin K \bigr) \le - M $.
By Lemma~1.2.18 of \cite{DZ}, the family $ a_{\la}^{-1} \la^{-1/2} \langle f,
\barmuxilaka \rangle $ must then satisfy a \emph{full} LDP with
the same speed and the same \emph{good} rate function. This completes the proof.
\end{proof}

Now we turn to the proof of Theorem~\ref{MDPmeasure}. We begin with the same
argument based on Theorem~4.1.11 of \cite{DZ}, which requires a certain limiting
behavior of probabilities of sets from a basis of topology. The derivation
of that behavior requires a specific shape of the sets. Thus, we have to
show that certain sets of that shape form a basis of topology.

\begin{lemm}
\label{TopBasis}
Let $ F $ and $ V $ be pairwise dual finite-dimensional vector spaces, equipped
with the usual topology, and let $ Q $ be a positively semi-definite quadratic
form on $ F $. Let $ \mathcal U_0 $ be the family of all open subsets
of the half-spaces $ \{ \nu \in V \sth \langle f, \nu \rangle > b \} $, where
$ b > 0 $ and $ Q(f) = 0 $, and denote by $ \mathcal U_1 $ the family
of all sets of the form
\begin{equation}
\label{eq:TopBasisU}
 \bigl\{ \nu \in V \sth
  \langle f_0, \nu \rangle > b_0,
  \langle f_1, \nu \rangle < b_1,
  \langle f_2, \nu \rangle < b_2,
  \ldots ,
  \langle f_n, \nu \rangle < b_n
 \bigr\} \, ,
\end{equation}
where either $ 0 < b_0/\sqrt{Q(f_0)} < b_i/\sqrt{Q(f_i)} $ for all
$ i = 1, 2, \ldots, n $, or $ b_0 < 0 < b_i $ for all $ i = 1, 2, \ldots, n $.
Then the family $ \mathcal U_0 \cup \mathcal U_1 $ is a basis of the topology on $ V $.
\end{lemm}

\begin{proof}
Define $ F_0 := \{ f \in F \sth Q(f) = 0 \} $ and $ F_0^\bot := \{ \nu \in V \sth
\langle f, \nu \rangle = 0 \text{~for all~} f \in F_0 \} $. Now take
$ \mu \in V $ and its open neighborhood $ W $. We have to show that there
exists $ U \in \mathcal U_0 \cup \mathcal U_1 $, such that $ x \in U \subseteq W $.
We distinguish three cases.

\textit{Case~1: $ \mu \notin F_0^\bot $.} Then there exist $ f \in F_0 $ and
$ a > 0 $, such that $ \langle f, \mu \rangle > a $, so that we can take
$ U := W \cap \{ \nu \in V \sth \langle f, \mu \rangle > a \} \in \mathcal U_0 $.

\textit{Case~2: $ \mu = 0 $.} Then there exists $ \eps > 0 $ and
elements $ f_0, f_1, \ldots, f_n $, such that
$ U := \bigcap_{i=0}^n \bigl\{ \nu \sth \langle f_i, \nu \rangle < \eps \bigr\}
\subseteq W $. Clearly, $ 0 \in U $. Since we can write
$ U = \bigl\{ \nu \sth \langle - f_0, \nu \rangle > - \eps \bigr\}
\cap \bigcap_{i=1}^n \bigl\{ \nu \sth \langle f_i, \nu \rangle < \eps \bigr\} $,
we also have $ U \in \mathcal U_1 $.

\textit{Case~3: $ \mu \in F_0^\bot \setminus \{ 0 \} $.} Recalling our convention
on division by zero from the end of Subsection~\ref{ssc:terminology}, it follows
from standard linear algebra and topology that the map
$ f \mapsto \langle f, \mu \rangle/\sqrt{Q(f)} $ vanishes on $ F_0 $, is
continuous on $ F \setminus \{ 0 \} $, is bounded and that
attains its maximum, say, at $ f_0 $. Since $ \mu \ne 0 $, we also have
$ \langle f_0, \mu \rangle > 0 $ and $ Q(f_0) > 0 $.

There exist functions $ f_1, f_2, \ldots, f_n $ and $ \delta \in \bigl( 0,
\langle f_0, \mu \rangle \bigr) $, such that
$ U_0 := \bigl\{ \nu \in V \sth \langle f_0, \nu - \mu \rangle > - \delta \bigr\} \cap
\bigcap_{i=0}^n \bigl\{ \nu \in V \sth \langle f_i, \nu - \mu \rangle < \delta \bigr\}
\subseteq W $. Now consider the sets:
$$
 U_{\eps, t} := \bigl\{ \nu \in V \sth
  \langle f_0, \nu - \mu \rangle > - \eps \bigr\}
 \cap \bigcap_{i=1}^n \bigl\{
  \langle f_i + t f_i, \nu - \mu \rangle < \eps \bigr\} \, .
$$
Clearly, $ \mu \in U_{\eps, t} $ for all $ \eps, t > 0 $. Next, for each
$ \nu \in U_{\eps, t} $, we can estimate:
$$
 \langle f_i, \nu - \mu \rangle = \frac{1}{t} \Bigl[
  \langle f_0 + t f_i, \nu - \mu \rangle - \langle f_0, \nu - \mu \rangle \Bigr]
 < \frac{2 \eps}{t} \, .
$$
Therefore, if $ \eps < \delta $ and $ 2 \eps/t < \delta $, then
$ U_{\eps, t} \subseteq U_0 $.

Now we turn our attention to the question when $ U_{\eps, t} \in \mathcal U $.
By \eqref{eq:TopBasisU}, this will be surely true if:
\begin{equation}
\label{eq:TopBasisSep}
 \frac{\langle f_0 + t f_i, \mu \rangle + \eps}{\sqrt{Q(f_0 + t f_i)}}
 > \frac{\langle f_0, \mu \rangle - \eps}{\sqrt{Q(f_0)}} \, .
\end{equation}
Since $ \langle f, \mu \rangle/\sqrt{Q(f)} $ is maximal at $ f_0 $,
it follows from smoothness that there exist $ a, t_0 > 0 $, such that
for all $ i = 1, 2, \ldots, n $,
$$
 \frac{\langle f_0 + t f_i, \mu \rangle}{\sqrt{Q(f_0 + t f_i)}}
  \ge \frac{\langle f_0, \mu \rangle}{\sqrt{Q(f_0)}} - a t^2
$$
for all $ 0 \le t \le t_0 $. Therefore, the condition~\eqref{eq:TopBasisSep}
is satisfied if $ t \le t_0 $ and $ a t^2 < \eps/\sqrt{Q(f_0)} $.
Collecting everything together, we conclude after some calculation that if
$ \displaystyle 0 < t < \min \left\{ t_0, \frac{\delta}{2 a \sqrt{Q(f)}},
\sqrt{\frac{\delta}{a \sqrt{Q(f)}}} \right\} $,
then there exists $ \eps > 0 $, such that $ U_{\eps, t} \subseteq U_0 \subseteq W $
and $ U_{\eps, t} \in \mathcal U $, so that the desired set $ U $ exists in this case, too.
This completes the proof.
\end{proof}

\begin{proof}[Proof of Theorem~\ref{MDPmeasure}]
We split the argument into several steps.

\medskip
\textit{Step~1: derive a MDP for finite-dimensional restrictions of the measures.}
Denote by $ \mathcal F $ the set of all finite-dimensional subspaces of
$ \B(\R^d) $, fix $ F \in \mathcal F $ and consider the random measures
$ \muxilaka $ as linear functionals on $ F $. Let $ \mathcal U_0
\cup \mathcal U_1 $ be the basis of the usual topology on $ F' $, where
$ \mathcal U_0 $ and $ \mathcal U_1 $ are defined as in Lemma~\ref{TopBasis},
taking the quadratic form $ Q := \sigmaxika^2 $. For $ U = \bigl\{ \nu \in F' \sth
\langle f_0, \nu \rangle > b_0, \langle f_1, \nu \rangle < b_1, \ldots,
\langle f_n, \nu \rangle < b_n \bigr\} \in \mathcal U_1 $, where the numbers
$ b_0, b_1, \ldots, b_n $ satisfy the conditions below \eqref{eq:TopBasisU},
we have, by Lemma~\ref{LimDevProb}:
$$
 \mathcal L_U := - \lim_{\la \to \infty} a_\la^{-2} \P \Bigl( a_\la^{-1} \la^{-1/2}
  \barmuxilaka \in U \Bigr) = \frac{\bigl( \max \{ b_0, 0 \} \bigr)^2}{2 \sigmaxika^2[f_0]}
 \, ;
$$
for $ U \in \mathcal U_0 $, we have $ \mathcal L_U = \infty $.
By Theorem~4.1.11 of \cite{DZ}, the random functionals $ a_{\la}^{-1} \la^{-1/2}
\barmuxilaka \in F' $ satisfy a weak LDP (MDP) as $ \la \to \infty $
with speed $ a_\la^2 $ and rate function:
$$
 \nu \mapsto \sup_{\substack{U \in \mathcal U \\ \nu \in U}} \mathcal L_U
  = \sup_{f \in F} \frac{\langle f, \nu \rangle^2}{2 \sigmaxika^2[f]} \, .
$$
Finally, by Lemma~1.2.18 of \cite{DZ}, these random functionals satisfy
a full LDP (MDP) with the same good rate function because they are
exponentially tight for speed $ a_\la^2 $. To see this, take a basis
$ f_1, \ldots, f_n $ of $ F $ with $ \sigmaxika[f_i] \le 1 $ for all $ i $ and consider
the compact sets $ K_M := \bigcap_{i=1}^n \bigl\{ \nu \in F' \sth |\langle f_i,
\nu \rangle| \le M \bigr\} $. By Lemma~\ref{LimDevProb}, $ \limsup_{\la \to \infty}
a_\la^{-2} \P \bigl( a_\la^{-1} \la^{-1/2} \barmuxilaka
\notin K_M \bigr) \le - M^2/2 $ and exponential tightness follows. This
completes Step~1.

\medskip
\textit{Step~2: combine the MDP's for finite-dimensional restrictions into a MDP for
entire random measures.} We apply a version of the
Dawson--G\"artner theorem for projective limits, namely Theorem~4.6.9 of \cite{DZ},
naturally embedding $ \Meas(\R^d) $ into $ \bigl( \B(\R^d) \bigr)' $, the algebraic dual
of $ \B(\R^d) $, and identifying the projections of the functionals to finite-dimensional
spaces with their restrictions to finite-dimensional subspaces of $ \B(\R^d) $.
Thus we find that, as $ \la \to \infty $, the random measures
$ a_\la^{-1} \la^{-1/2} \barmuxilaka $ satisfy the MDP in
$ \bigl( \B(\R^d) \bigr)' $ with speed $ a_\la^2 $ and the good
rate function:
$$
 \Jxikameas(\nu) := \sup_{F \in \mathcal F} \sup_{f \in \Lin F}
  \frac{\langle f, \nu \rangle^2}{2 \sigmaxika^2[f]}
  = \sup_{f \in \B(\R^d)} \frac{\langle f, \nu \rangle^2}{2 \sigmaxika^2[f]} \, .
$$

\medskip
\textit{Step~3: compute the rate function.} Take $ \nu \in \bigl( \B(\R^d) \bigr)' $ and
distinguish five separate cases.

\textit{Case~1: $ \nu $ is unbounded with respect to the supremum norm on $ \B(\R^d) $.}
Since the latter is stronger than the seminorm $ \sigmaxika $, we have
$ \Jxikameas(\nu) = + \infty $ in this case.

\textit{Case~2: $ \nu $ is bounded with respect to the supremum norm, but is not a measure.}
This means that there exists a sequence
of bounded functions $ f_n $ with $ f_n \downarrow 0 $ pointwise, such that
the $ \langle f_n, \nu \rangle $ does not converge to $ 0 $. We may assume that
$ |\langle f_n, \nu \rangle| \ge 1 $ for all $ n $. Denoting $ L^2 := \{ f \Colon
\R^d \to \R \sth \sigmaxika[f] < \infty \} $ and noting that $ \B(\R^d) \subseteq
L^2 $, we find that $ \sigmaxika[f] \to 0 $ by the dominated convergence theorem.
Therefore $ \Jxikameas(\nu) = + \infty $.

\textit{Case~3: $ \nu $ is a measure, but is not absolutely continuous with respect
to $ \Vxi(\ka(x)) \, \ka(x) \,\dl x $,} where $ \Vxi $ is as in \eqref{eq:Vxi}. In this
case, there exists a measurable
set $ A $ with $ \int_A \Vxi(\ka(x)) \, \ka(x) \,\dl x = 0 $, but $ \nu(A) \ne 0 $.
In other words, $ \sigmaxika[1_A] = 0 $, but $ \langle \1_A, \nu \rangle \ne 0 $,
so that again $ \Jxikameas(\nu) = + \infty $.

\textit{Case~4: $ \nu \ll \Vxi(\ka(x)) \, \ka(x) \,\dl x $, but $ \sigmaxika[\rho] =
\infty $, where $ \rho(x) := \nu(\dl x)/(\Vxi(\ka(x)) \, \ka(x) \,\dl x) $.}
In this case, there exists a sequence $ \rho_1, \rho_2, \ldots \in L^2 $ which
converges pointwise to $ \rho $ and satisfies $ \rho \rho_n \ge 0 $ and $ |\rho_1| \le
|\rho_2| \le \ldots $ By the monotone convergence theorem, we have
$ \sigmaxika[\rho_n] \uparrow \infty $; we may assume that
$ \sigmaxika[\rho_1] > 0 $. Then the functions
$ g_n := \rho_n/\sigmaxika^2[\rho_n] $ satisfy $ \sigmaxika[g_n] = 1/\sigmaxika[\rho_n]
\to 0 $ but $ \langle g_n, \nu \rangle \ge 1 $, so that again
$ \Jxikameas(\nu) = + \infty $.

\textit{Case~5: $ \nu \ll \Vxi(\ka(x)) \, \ka(x) \,\dl x $ and
$ \rho(x) := \nu(\dl x)/(\Vxi(\ka(x)) \, \ka(x) \,\dl x) $ satisfies
$ \sigmaxika[\rho] < \infty $.} In this case we may write:
\begin{align*}
 \Jxikameas(\nu)
 &= \sup_{f \in \B(\R^d)}
  \frac{\displaystyle \left( \int_{\R^d} f(x) \, \rho(x) \, \Vxi(\ka(x))
    \, \ka(x) \,\dl x \right)^2}%
   {\displaystyle 2 \int_{\R^d} \rho^2(x) \, \Vxi(\ka(x)) \, \ka(x) \,\dl x}
 = \sup_{f \in L^2}
  \frac{\displaystyle \left( \int_{\R^d} f(x) \, \rho(x) \, \Vxi(\ka(x))
    \, \ka(x) \,\dl x \right)^2}%
   {\displaystyle 2 \int_{\R^d} \rho^2(x) \, \Vxi(\ka(x)) \, \ka(x) \,\dl x} =
\\
 &= \frac 1 2 \, \sigmaxika^2[\rho] = \Ixikameas(\nu) \, ,
\end{align*}
where the latter is defined in \eqref{eq:rateemp}. The second equality holds because
$ \B(\R^d) $ is dense in $ L^2 $; the third one is due to the Cauchy--Schwarz
inequality. This completes Step~3.

\medskip
\textit{Step~4: restrict the MDP.} To see that we may replace $ \bigl( \B(\R^d) \bigr)' $ and $ \Jxikameas $
with $ \Meas(\R^d) $ and $ \Ixikameas $, we apply Lemma~4.1.5 of \cite{DZ}, noting that
$ \Jxikameas $ agrees with $ \Ixikameas $ on $ \Meas(\R^d) $ and is infinite outside
$ \Meas(\R^d) $. This completes the proof.
\end{proof}

\subsection{Proof of non-degeneracy of the limiting variance}
\label{ssc:NDeg:Proof}

Throughout this subsection, we stick to the conventions on $ \xi $, $ \Hxi $, $ \Deltaxi $,
$ R $, $ \xi_\la $, $ \Deltaxi_\la $, $ R_\la $ and $ \Omega $ specified in Subsection~\ref{NDeg}.
Before proving Theorem~\ref{NDeg}, we need the following auxiliary result.

\begin{lemm}
\label{PowSum}
Let $ f $ be a non-negative locally integrable function. Take $ 1 < a < b $. Then there exists
a universal constant $ C $, such that for every non-negative geometric functional $ g $,
\begin{align*}
 \left[ \E \Biggl(
   \sum_{\marked{x} \in \marked{\Po}_f} g(\marked{x}, \marked{\Po}_f)
  \Biggr)^a \right]^{1/a}
  &\le
  C \left( \frac{(a-1) b}{b - a} \right)^{\frac{a-1}{a}} \Biggl\{
    \frac{b-1}{b-a} \left[
      \int_{\marked{\R}^d} \E \bigl( g(\marked{x}, \marked{\Po}_f) \bigr)^b f(x) \,\dl \marked{x}
    \right]^{1/b} + \null
\\ & \kern 11em \null
    + \int_{\marked{\R}^d} \Bigl[
     \E \bigl( g(\marked{x}, \marked{\Po}_f) \bigr)^b
   \Bigr]^{1/b} f(x) \,\dl \marked{x}
  \Biggr\} \, .
\end{align*}
\end{lemm}

\begin{proof}
Let $ m $ be a non-negative measurable function on $ \marked{\R}^d $, such that
for each $ \marked{x} $ with $ m(\marked{x}) = 0 $, $ g(\marked{x}, \marked{\Po}_f) $
almost surely vanishes. For each $ t \ge 0 $, let
$ \marked{M}(t) := \{ \marked{x} \sth m(\marked{x}) > t \} $. Write:
\begin{equation*}
\begin{split}
 J := \left[ \E \Biggl(
   \sum_{\marked{x} \in \marked{\Po}_f} g(\marked{x}, \marked{\Po}_f)
  \Biggr)^a \right]^{1/a}
 &= \left[ \E \Biggl(
   \sum_{\marked{x} \in \marked{\Po}_f \cap \marked{M}(0)}
   \frac{g(\marked{x}, \marked{\Po}_f)}{m(\marked{x})} \int_0^{m(\marked{x})} \dl t
  \Biggr)^a \right]^{1/a} =
\\
 &= \left[ \E \Biggl(
   \int_0^\infty \sum_{\marked{x} \in \marked{\Po}_f \cap \marked{M}(t)}
   \frac{g(\marked{x}, \marked{\Po}_f)}{m(\marked{x})} \, \dl t
  \Biggr)^a \right]^{1/a} \, .
\end{split}
\end{equation*}
By Minkowski's inequality for integrals and then by Jensen's inequality, we can estimate:
$$
 J \le \int_0^\infty \left[ \E \Biggl(
   \sum_{\marked{x} \in \marked{\Po}_f \cap \marked{M}(t)}
   \frac{g(\marked{x}, \marked{\Po}_f)}{m(\marked{x})}
  \Biggr)^a \right]^{1/a} \, \dl t
 \le \int_0^\infty \left[ \E \Biggl(
   \sum_{\marked{x} \in \marked{\Po}_f \cap \marked{M}(t)}
   |\marked{\Po}_f \cap \marked{M}(t)|^{a-1}
   \left( \frac{g(\marked{x}, \marked{\Po}_f)}{m(\marked{x})} \right)^a
  \Biggr) \right]^{1/a} \, \dl t \, .
$$
By the Palm formula \eqref{eq:Palm}, we can rewrite this estimate as:
$$
 J \le \int_0^\infty \left[ \int_{\marked{M}(t)}
   \E \Biggl\{
   \Bigl( 1 + |\marked{\Po}_f \cap \marked{M}(t)| \Bigr)^{a-1}
   \left( \frac{g(\marked{x}, \marked{\Po}_f)}{m(\marked{x})} \right)^a
   \Biggr\} f(x) \,\dl \marked{x} \right]^{1/a} \, \dl t \, .
$$
Applying H\"older's inequality, we obtain:
$$
 J \le \int_0^\infty \left[ \int_{\marked{M}(t)}
   \Biggl\{ \E
   \Bigl( 1 + |\marked{\Po}_f \cap \marked{M}(t)| \Bigr)^{(a-1)b/(b-a)} \Biggr\}^{(b-a)/b}
   \Biggl\{ \E
   \left( \frac{g(\marked{x}, \marked{\Po}_f)}{m(\marked{x})} \right)^b
   \Biggr\}^{a/b} f(x) \,\dl \marked{x} \right]^{1/a} \, \dl t \, .
$$
Now set $ m(\marked{x}) := \Bigl[ \E \bigl( g(\marked{x}, \marked{\Po}_f) \bigr)^b \Bigr]^{1/b} $.
In addition, observe that $ |\marked{\Po}_f \cap \marked{M}(t)| $ is
Poisson with expectation $ I(t) := \int_{\marked{M}(t)} f(x) \,\dl \marked{x} $.
If $ X \sim \Pois(\la) $, then for any $ n \in \N $, we may express the $ n $-th moment
of $ 1 + X $ in terms of a contour integral:
$$
 \E (1 + X)^n = \frac{n!}{2 \pi i} \oint_K \frac{e^{\la(e^z - 1) + z}}{z^{n+1}} \,\dl z \, .
$$
Choosing $ K $ to be the circle with radius $ 1/(1 + \la) $ centered at the origin, we can estimate
$ \E (1 + X)^n \le n! (1 + \la)^n f(\la) $, where $ f(\la) = e^{\la(e^{1/(\la + 1)} - 1) + 1/(\la + 1)} $.
Noting that $ f $ is bounded in $ \la > 0 $ and applying Stirling's formula,
we obtain that there exists a universal constant $ A $, such that $ \E (1 + X)^\gamma
\le (A \gamma(1 + \la))^\gamma $ for all $ \gamma > 0 $. Therefore,
$$
 J \le \left( \frac{A(a-1) b}{b - a} \right)^{\frac{a-1}{a}}
     \int_0^\infty \left[ \bigl( 1 + I(t) \bigr)^{a-1} I(t) \right]^{1/a} \,\dl t
   \le A \left( \frac{(a-1) b}{b - a} \right)^{\frac{a-1}{a}}
     \int_0^\infty \Bigl[ \bigl( I(t) \bigr)^{1/a} + I(t) \Bigr] \,\dl t \, .
$$
Observe that:
$$
 \int_0^\infty I(t) \,\dl t = \int_{{\R}^d} m(\marked{x}) \, f(x) \,\dl x \, .
$$
For the rest, apply Young's inequality:
\begin{equation}
\label{eq:PowSum:Young}
 \int_0^\infty \bigl( I(t) \bigr)^{1/a} \,\dl t \le
  \frac{a-1}{a} \int_0^\infty \phi(t) \,\dl t +
  \frac{1}{a} \int_0^\infty \bigl( \phi(t) \bigr)^{1-a} I(t) \,\dl t \, ,
\end{equation}
where we choose $ \phi(t) = \min \bigl\{ 1, (c/t)^{(b-1)/(a-1)} \bigr\} $;
$ c > 0 $ will be chosen later. It is easy to see that:
\begin{equation}
\label{eq:PowSum:Int1min}
 \int_0^\infty \phi(t) \,\dl t = \frac{b-1}{b-a} \, c \, .
\end{equation}
For the second term, we have:
$$
 \int_0^\infty \bigl( \phi(t) \bigr)^{1-a} I(t) \,\dl t
  = \int_{\marked{\R}^d} \int_0^{m(\marked{x})} \max \biggl\{ 1, \left( \frac{t}{c} \right)^{b-1} \biggr\}
      \,\dl t \, f(x) \,\dl x
  \le \int_{\marked{\R}^d} \left( m(\marked{x}) + \frac{\bigl( m(\marked{x}) \bigr)^b}{b c^{b-1}} \right)
      f(\marked{x}) \,\dl x \, .
$$
Choosing $ c := \Bigl[ \int_{\marked{\R}^d} \bigl( m(\marked{x}) \bigr)^b \, f(x) \,\dl \marked{x} \Bigr]^{1/b} $,
combining with \eqref{eq:PowSum:Int1min} and plugging into \eqref{eq:PowSum:Young},
we obtain:
$$
 \int_0^\infty \bigl( I(t) \bigr)^{1/a} \,\dl t \le
   B \left[ \int_{\marked{\R}^d} \bigl( m(\marked{x}) \bigr)^b \, f(x) \,\dl \marked{x} \right]^{1/b}
  + \frac{1}{a} \int_{\marked{\R}^d} m(\marked{x}) \, f(x) \,\dl \marked{x} \, ,
$$
where $ B = \frac{a-1}{a} \, \frac{b-1}{b-a} + \frac{1}{ab} \le \frac{b-1}{b-a} $.
Collecting all together, the result follows.
\end{proof}

\begin{coro}
\label{PowMH}
Let $ (g_\la)_{\la > \la_0} $ and $ (R^*_\la)_{\la > \la_0} $ be two families of
non-negative geometric functionals. Define:
\begin{align*}
 h_\la(\marked{x}, \marked{\X}) &:= \sum_{\marked{y} \in \marked{\X}}
  g_\la(\marked{y}, \marked{\X})
  \1 \bigl( R_\la^*(\marked{y}, \marked{\X}) \ge \la^{1/d} \| y - x \| \bigr)
 \, ,
\\
 h^-_\la(\marked{x}, \marked{\X}) &:= \sum_{\marked{y} \in \marked{\X}}
  g_\la(\marked{y}, \marked{\X} \setminus \{ \marked{x} \})
  \1 \bigl( R_\la^*(\marked{y}, \marked{\X}) \ge \la^{1/d} \| y - x \| \bigr)
\end{align*}
(assuming $ \marked{x} \in \marked{\X} $).
Next, let $ 0 < \tau < \infty $ and let $ p, s, q > 0 $ with $ s/p + d/q < 1 $.
Suppose that the family $ (R^*_\la)_{\la > \la_0} $ satisfies
Assumption~$\MomHom(q, \tau, \Omega)$. Then, if the family
$ (g_\la)_{\la > \la_0} $ satisfies Assumption~$\MomHom(p, \tau, \Omega)$,
the family $ (h^-_\la)_{\la > \la_0} $ satisfies Assumption~$\MomHom(s, \tau, \Omega)$;
if the family $ (g_\la)_{\la > \la_0} $ satisfies Assumption~$\MomHomOne(p, \tau, \Omega)$,
the family $ (h_\la)_{\la > \la_0} $ satisfies Assumption~$\MomHom(s, \tau, \Omega)$.
\end{coro}

\begin{proof}
For arbitrary non-negative geometric functional $ \eta $ and any $ \tau, u > 0 $, set:
$$
 m_{u, \tau}(\eta) := \esssup_{\1(x \in \Omega) \,\dl \marked{x}}
  \Bigl[ \E \bigl( \eta(\marked{x}, \marked{\Po}_{\tau} \cap \Omega) \bigr)^u \Bigr]^{1/u} \, , \quad
 m^+_{u, \tau}(\eta) := \esssup_{\1(x, y \in \Omega) \,\dl \marked{x} \otimes \dl \marked{y}}
\Bigl[ \E \bigl( \eta(\marked{x}, (\marked{\Po}_{\tau} \cup \{ \marked{y} \} ) \cap \Omega) \bigr)^u \Bigr]^{1/u} \, .
$$
Thus, we have to bound $ m_{s, \la \tau}(h_\la) $ and $ m_{s, \la \tau}(h^-_\la) $.
We shall apply Lemma~\ref{PowSum}. Observe that one can choose $ 0 < r < q $ and $ t > s $
with $ t/p + d/r = 1 $. To bound $ m_{s, \la \tau}(h^-_\la) $, we first use
H\"older's and Markov's inequality to estimate:
\begin{equation*}
\begin{split}
 & \biggl[ \E \Bigl(
  g_\la(\marked{y}, \marked{\Po}_{\la \tau} \cap \Omega)
  \1 \bigl( R_\la^*(\marked{y}, \marked{\Po}_{\la \tau} \cap \marked{\Omega}) \ge \la^{1/d} \| y - x \| \bigr)
 \Bigr)^t \biggr]^{1/t} \le
\\
 & \kern 3em \null
 \le m_{p, \la \tau}(g_\la)
  \Bigl[ \P \bigl(
    R_\la^*(\marked{y}, \marked{\Po}_{\la \tau} \cap \marked{\Omega}) \ge \la^{1/d} \| y - x \|
  \bigr) \Bigr]^{d/r}
 \le \frac{m_{p, \la \tau}(g_\la)\bigl( m_q(1 + R^*_\la) \bigr)^{qd/r}}%
           {\bigl( 1 + \la^{1/d} \| y - x \| \bigr)^{qd/r}}
\end{split}
\end{equation*}
for almost all $ \marked{x}, \marked{y} \in \marked{\Omega} $. By Lemma~\ref{PowSum}, we have:
$$
 m_{s, \la \tau}(\marked{x}, h_\la^-) \le K_1 \left[ \int_\Omega
  \frac{\bigl( m_{p, \la \tau}(g_\la) \bigr)^t \bigl( m_q(1 + R^*_\la) \bigr)^{qtd/r}}%
       {\bigl( 1 + \la^{1/d} \| y - x \| \bigr)^{qtd/r}}
 \, \la \tau \,\dl y \right]^{1/t} \kern -3pt
 + K_2 \int_\Omega
  \frac{m_{p, \la \tau}(g_\la)\bigl( m_q(1 + R^*_\la) \bigr)^{qd/r}}%
       {\bigl( 1 + \la^{1/d} \| y - x \| \bigr)^{qd/r}} \, \la \tau \,\dl y \, .
$$
Substituting $ z = \la^{1/d} (y - x) $, observe that the both integrals converge uniformly
in $ x $, leading to the desired bound on
$ m_{s, \la \tau}(h^-_\la) $. Similarly, we can bound $ m_{s, \la \tau}(h_\la) $,
using $ m^+_{p, \la \tau}(g_\la) $ instead of $ m_{p, \la \tau}(g_\la) $. This
completes the proof.
\end{proof}

\begin{proof}[Proof of Theorem~\ref{NDeg}]
First, observe that there exists $ \rho > 0 $, such that for notably
many pairs $ (t, \marked{\X}) $,
$ \Deltaxi \bigl( (\0, t), \marked{\X} \bigr) \ne 0 $,
$ \marked{\X} $ is $ \rho $-externally stable at $ (\0, t) $
and $ \marked{\X} \subset B_\rho(\0) $. This also means
that with nonzero probability,
$ \Deltaxi \bigl( (\0, T), \marked{\Po}_1 \cap B_\rho(\0) \bigr) \ne 0 $ and
$ \marked{\Po}_1 \cap B_\rho(\0) $ is $ \rho $-externally stable at $ (\0, T) $, where
$ T $ is a generic random mark with distribution $ \P_\Marks $, independent
of $ \marked{\Po}_1 $ (see also Lemma~4.2 of \cite{PW}). However,
from the latter, we can deduce that
$ \Deltaxi \bigl( (\0, T), \marked{\Po}_1 \bigr) \ne 0 $ and
$ \marked{\Po}_1 $ is $ \rho $-externally stable at $ (\0, T) $:
see Remarks~\ref{ExtStabDelta} and \ref{ExtStabMatch}.

Now let $ v := \vol(\Omega) $ and $ \Omega^* := (\tau v)^{-1/d} \Omega $.
Define $ \ka $ to be the uniform density on $ \Omega^* $
(that is, $ \ka(x^*) := \tau \1( x^* \in \Omega^*) $) and let $ f $ to be the
indicator function of $ \Omega^* $. Then condition~\eqref{eq:NDeg:PW:xiMom}
need not be verified, while, by Remark~\ref{MomHomUnif}, the equivalent
conditions \eqref{eq:NDeg:PW:DeltafMom} and \eqref{eq:NDeg:PW:DeltaMom} hold
provided that the family $ (\Deltaxi_\la)_{\la > \la_0} $
satisfies Assumption~$\MomHom(s, \tau, \Omega)$. To verify the latter,
first estimate (assuming that $ \marked{x} \in \marked{\X} $):
$$
 \bigl| \Deltaxi_\la(\marked{x}, \marked{\X}) \bigr|
  \le \bigl| \xi_\la(\marked{x}, \marked{\X}) \bigr| +
   \sum_{\marked{y} \in \marked{\X}}
    \Bigl( \bigl| \xi_\la(\marked{y}, \marked{\X}) \bigr|
         + \bigl| \xi_\la(\marked{y}, \marked{\X} \setminus \{ \marked{x} \}) \bigr|
    \Bigr) \1 \bigl( R(\marked{y}, \marked{\X}) \ge \la^{1/d} \| y - x \| \bigr) \, .
$$
Clearly, there exists $ s > 2 $ with $ s/p + d/q < 1 $.
Since $ p > s $, the family $ (\xi_\la)_{\la > \la_0} $ satisfies
Assumption~$\MomHom(s, \tau, \Omega)$. To verify the latter for the
remaining sum, apply Corollary~\ref{PowMH}.

We have now verified all the conditions required for Theorem~\ref{NDeg:PW}.
To complete the proof, observe that $ \xi $ satisfies Assumptions~$\ConvergVar(\ka)$
by Remark~\ref{MomHomUnif}. Formula \eqref{eq:Varlim} applied to our choice of
$ f $ and $ \ka $ yields $ \Vxi(\tau) = \lim_{\la \to \infty} \sigmaxilaka^2[f]/\la > 0 $
and the proof is complete.
\end{proof}

\paragraph{Acknowledgements} The authors are grateful to J.~E. Yukich for
many inspiring discussions related to the subject of this paper. They would
also wish to thank the referees for pointing out several deficiencies of
the previous version. Their thoughtful remarks led to significant improvement
of the results.

Peter Eichelsbacher and Martin Rai\v{c} dedicate their contribution to
this paper to the memory of Tomasz Schreiber.

%
%

\end{document}